\documentclass[11pt]{amsart}
\usepackage{amsfonts, bm}
\usepackage{mathrsfs}
\allowdisplaybreaks
\usepackage{hyperref}
\usepackage{color}
\usepackage{cite,enumerate}
\makeatletter

\newtheorem{theorem}{Theorem}[section]
\newtheorem{lemma}[theorem]{Lemma}

\newtheorem{proposition}[theorem]{Proposition}
\numberwithin{equation}{section}

\allowdisplaybreaks \numberwithin{equation}{section}

\begin{document}
\title[Semi-wave and sharp estimates of propagation ]{Semi-wave and sharp estimates of propagation  for monostable free boundary problems  in time-periodic environment}
\author[ Y. Du and  Z. Ma ]{ Yihong Du$^{\dag}$ and Zhuo Ma$^{\dag}$
}
 \thanks{
 \mbox{$^{\dag}$ School of Science and Technology, University of New England, Armidale, NSW 2351, Australia.} 
 \\
 %\mbox{\ \ \ \ \  $^{\ddag}$ School of Mathematics and Statistics, Lanzhou University,
 %Lanzhou, Gansu 730000, China.} \\
\mbox{\ \ \ \ \ \ \    Emails:}  ydu@turing.une.edu.au (Y. Du),\ mazh25@foxmail.com (Z. Ma).}
 
\date{\today}
\maketitle 
\begin{abstract}
We investigate the propagation profile of positive solutions to 
  \begin{equation*}
    u_t-du_{xx}=f(t,u) \mbox{ for }  t>0,\ x\in(g(t),h(t)),
  \end{equation*}
  where $f(t,u)$ is  monostable in $u$ and $T$-periodic in $t$, and the free boundaries $x=g(t), \ x=h(t)$ are determined by the Stefan condition $g'(t)=-\mu u_x(t, g(t)),\ h'(t)=-\mu u_x(t,h(t))$, coupled with $u(t, g(t))=u(t, h(t))=0$. 
  For a special nonlinearity satisfying the strong KPP condition, the  long-time behavior and asymptotic spreading speed  of this problem were considered by Du, Guo and Peng \cite{DGP}.  In this paper, by employing new techniques,  we extend the results of \cite{DGP} to general monostable nonlinearities beyond the KPP framework and at the same time we obtain more precise description of the propagation profile: we prove the existence and uniqueness of a semi-wave and show that the spreading solution converges to this semi-wave as time goes to infinity. 
  %Our results suggest that, compared with the KPP case, general monostable nonlinearities may lead to a larger asymptotic spreading speed for the free boundary problem under consideration.

\bigskip

\textbf{Keywords}: Time-periodic environment, free boundary,   semi-wave, sharp profile.

\medskip 
\textbf{AMS Subject Classification}: 35B40, 35K55, 35R35

\vspace{1cm}

\noindent
\underline{\it Dedicated to Professor Hiroshi Matano for his 75th birthday.}
\end{abstract}

\section{Introduction and main results}
In this paper, we obtain sharp estimate for the propagation profile determined by the following free boundary model with a  time-periodic monostable nonlinearity:
 \begin{equation}\label{free-bound}
\begin{cases}
  u_t-du_{xx}=f(t,u), &t>0,~g(t)<x<h(t),\\
  u(t,g(t))=0, u(t,h(t))=0, &t>0,\\
  g'(t)=-\mu u_x(t,g(t)),&t>0,\\
  h'(t)=-\mu u_x(t,h(t)),&t>0,\\
  -g(0)=h(0)=h_0, u(0,x)=u_0(x), &-h_0\leq x\leq h_0,
  \end{cases}
\end{equation}
where $d$, $\mu$ and $h_0$ are given positive constants;  $x=g(t)$ and $x=h(t)$ are  moving boundaries to be determined together with  $u(t,x)$. The nonlinear function $f:\mathbb{R}\times[0,\infty)\to \mathbb{R}$ is assumed to satisfy the following smoothness conditions:
\begin{equation}\label{smooth}
\begin{cases}
\mbox{ $f(t, u)$ is  of class $C^{\alpha/2}$ in $t$ uniformly with respect to  $u\geq 0$  for some $\alpha\in(0,1)$,}\\
\mbox{ and is of class $C^1$ in $u$  uniformly with respect  to $t\in\mathbb{R}$. }
\end{cases}
\end{equation}
Apart from the above smoothness requirement, we assume that  $f(t,u)$ is periodic in $t$ and monostable in $u$, namely:
\begin{enumerate}
  \item[($\rm{\bf{f_p}}$):] (Time-periodic) There exists $T>0$ such that $f(t+T,\cdot)=f(t, \cdot)$ for $t\in\mathbb{R}$;
  \item[($\rm{\bf{f_m}}$):](Monostable) For every fixed $t\in\mathbb R$, $f(t, 0)=f(t,1)=0$, $f(t,u)>0 $ for $u\in(0,1)$, $f(t, u)<0 $ for $u\in(1,+\infty)$, and
  \begin{equation}\label{eq-f01}
  \int_0^Tf_u(t, 0)dt>0>\int_0^Tf_u(t,1)dt.
  \end{equation}
  \end{enumerate}
  The initial function $u_0(x)$ is assumed to be an element of $I(h_0)$ with
 \begin{equation*}
  I(h_0):=\{u_0\in C^2([-h_0,h_0]):u_0(-h_0)=0=u_0(h_0), u_0(x)>0\text{ in }(-h_0,h_0)\}.
 \end{equation*}
  
  In the special case that $f(t,u)=f(u)$ is independent of $t$, \eqref{free-bound} reduces to a model considered in \cite{DL10, DL15, DLZ15}, which can be regarded as a free boundary version of the corresponding Cauchy problem 
  \begin{equation}\label{Cauchy}
  \begin{cases}
  u_t-du_{xx}=f(u), &t>0,~ x\in\mathbb R,\\
  u(0,x)=\tilde u_0(x), &x\in\mathbb R,
  \end{cases}
\end{equation}
  where $\tilde u_0(x)$ is the zero extension of $u_0(x)$ from $[-h_0, h_0]$ to $\mathbb R$. Problem \eqref{Cauchy} has been used as a
  model for propagation of a species with density $u(t,x)$ at time $t$ and spatial location $x$ since the pioneering works of Fisher \cite{Fisher} and  Kolmogorov-Petrovskii-Piskunov (KPP) \cite{KPP}, and starting from Aronson-Weinberger \cite{AW, AW78}, significant further developments have been achieved along different lines of extensions (see, for example, \cite{BH, BHM, fife77, HNRR, LZ, shen10, W82, W02, Xin}). Compared with the classical model \eqref{Cauchy}, the free boundary model \eqref{free-bound} has the advantage that the population range is explicitly given in the model by $[g(t), h(t)]$, with the free boundaries $x=g(t)$ and $x=h(t)$ representing the spreading fronts. The free boundary condition in \eqref{free-bound} coincides with the Stefan condition used to describe the melting of ice in contact with water, 
 a derivation of this condition based on some ecological assumptions can be found in \cite{BDK}. On the other hand, when the parameter $\mu$ in \eqref{free-bound} converges to $\infty$, it is known that \eqref{Cauchy} is the limiting problem of \eqref{free-bound} (see \cite{DG}).
 
 For \eqref{free-bound} in the homogeneous case with $f(u)$ monostable (as well as bistable or of combustion type), the precise spreading profile  was obtained in \cite{DMZ} (see also \cite{DMZ2} for the high space dimension case), which indicates that, when spreading happens, for large time, the solution converges to
 the associated semi-wave solution. This result reveals a sharp difference from the classical  model \eqref{Cauchy} in that, with a monostable $f(u)$, it has a minimal wave speed $c_*>0$ so that \eqref{Cauchy} has a traveling wave solution if and only if its speed $c\geq c_*$, and  the solution of \eqref{Cauchy} is approximated by the traveling wave solution with minimal speed $c_*$, but the precise approximation involves  a logarithmic correction term when $f(u)$ satisfies additionally the KPP condition\footnote{Namely, $f(u)\leq f'(0)u$ for $u\in [0,1]$.}; such a correction term does not occur for the homogeneous \eqref{free-bound}, where the density function $u(t,x)$ is approximated by the unique semi-wave solution without any logarithmic shift (resembling the case of \eqref{Cauchy} with a bistable $f(u)$ considered in  \cite{fife77}). We refer to \cite{AGX, AHR, Bramson, HNRR} and the references therein for some results on the precise propagation profile of \eqref{Cauchy}.

However, the understanding of \eqref{free-bound} in the time-periodic case has not reached the level described in the previous paragraph in the existing works.  { It is easy to extend the proofs of \cite{DGP} to show that \eqref{free-bound} admits a unique global solution triple $(u(t,x),g(t),h(t))$. Moreover, if $f_u(\cdot,0)>0$ ,  its long-time behavior exhibits a spreading-vanishing dichotomy,} namely one of the following must happen:
\begin{enumerate}
  \item [(i)] \textbf{spreading}: as $t\to\infty$, $(g(t),h(t))$ converges to $ (-\infty,\infty)$, and $u(t,x)$ converges locally uniformly to the unique positive $T$-periodic solution of $w'=f(t,w)$ in $\mathbb R$,
  \item [(ii)]\textbf{vanishing}:  as $t\to\infty$, $(g(t),h(t))$ converges to some finite interval $ (g_\infty,h_\infty)$, and $u(t,x)$ converges uniformly to $0$ as $t\to\infty$.
\end{enumerate}

In the special case $f(t,u)=u[a(t)-b(t)u]$ with $a(t)$ and $b(t)$ positive $T$-periodic H\"older continuous functions, it was shown in \cite{DGP} that \eqref{free-bound} has a unique semi-wave solution, which determines the spreading speed of a high dimension version of \eqref{free-bound} with radial symmetry, but a sharp approximation of the solution by the semi-wave profile is lacking. Moreover, the proof there relied substancially on the special properties of the particular form of $f(t,u)$; more precisely the strong KPP property that $f(t,u)/u$ is decreasing in $u>0$, has played an essential role in the proof.

It is expected that the results in \cite{DGP} remain valid for a general monostable $f(t,u)$, and moreover, the estimate of the propagation profile can be sharppened as in the homogeneous case of \cite{DMZ}. The main purpose of this paper is to show that these indeed can be achieved. This is largely inspired by our recent work \cite{DMW25},  where similar results have been proved for a related but different free boundary model in time-periodic environment by using new techniques -- it turns out that  some of the techniques there can be adapted to treat \eqref{free-bound}
here, although additional new techniques are also required.
%%%%%%%%%%%%%%%%
 
\medskip

Our main results for \eqref{free-bound} are the following two theorems.

 \begin{theorem}\label{theo-semi-wave} Suppose that  \eqref{smooth}, $\rm{(\bf{f_p})}$  and $\rm{(\bf{f_m})}$ hold.  
Then given any $\mu>0$, the equations
  \begin{equation}\label{eqn-wave}
   \begin{cases}
     \Psi_t-d\Psi_{xx}+k(t)\Psi_x=f(t,\Psi), \ { 0\leq \Psi\leq 1}, &t\in[0,T],~x\in(0,\infty),\\
\Psi(t,0)=0, &t\in[0,T],\\
\Psi(0,x)=\Psi(T,x), &x\in[0,\infty)\\
   \end{cases}
  \end{equation}
and 
  \begin{equation}\label{eq-phix1}
    \mu \Psi_x(t,0)=k(t)\quad  \text{ for }t\in[0,T]
  \end{equation}
 admit a unique  solution pair $(k(t),\Psi(t,x))$  with $k(t)$  a H\"older  continuous, positive $T$-periodic function. Furthermore, $\Psi(t,\infty)=1$, $\Psi_x(t,x)>0$ for $t\in[0,T]$ and  $x\in[0,\infty)$, and there exists a constant $c^*$ independent of $\mu$ such that $\bar{k}:=\frac{1}{T}\int_0^Tk(s)ds<c^*$. 
\end{theorem}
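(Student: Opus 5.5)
We split the problem into two stages: first solve \eqref{eqn-wave} for each prescribed positive $T$-periodic $k(t)$, then choose $k$ so that \eqref{eq-phix1} holds. In the first stage, fix a Hölder continuous positive $T$-periodic $k$. Working on half-intervals $[0,L]$ with Dirichlet data, we will construct the maximal $T$-periodic solution $\Psi^{k}$ by monotone iteration, starting from the supersolution $\max_t p(t)$-type bound. Here $p(t)$ denotes the unique positive $T$-periodic solution of $w'=f(t,w)$, and we may normalise so that $p\equiv1$ is the relevant equilibrium under $(\mathbf{f_m})$. We then let $L\to\infty$.

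The key dichotomy is controlled by the spreading speed of the time-periodic Cauchy problem. Let $c^*$ be the minimal average speed of $T$-periodic pulsating travelling waves for $u_t=du_{xx}+f(t,u)$; this is known from the general monostable time-periodic theory (Nadin, Liang--Zhao). We expect the following: if $\bar k<c^*$, then $\Psi^k$ is positive, satisfies $\Psi^k(t,\infty)=1$ and $\Psi^k_x>0$; if $\bar k\ge c^*$, then $\Psi^k\equiv0$. Positivity for $\bar k<c^*$ comes from a compactly supported subsolution. In the moving frame $y=x-\int_0^tk$, the drift has average $\bar k<c^*$, and spreading for the Cauchy problem gives a subsolution that lives on a bounded interval and is translated along. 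Monotonicity in $x$ follows from the sliding method, comparing $\Psi(t,x+s)$ with $\Psi(t,x)$. Uniqueness for fixed $k$ follows from a standard sliding/ratio argument, using $\int_0^Tf_u(t,1)<0$ near $x=\infty$ and $\Psi(t,0)=0$ with the Hopf lemma near $x=0$. Vanishing when $\bar k\ge c^*$ follows by comparison with the pulsating wave of speed $c^*$, which lies above any such solution after shifting. Non-KPP $f$ makes this the delicate point, so we would instead argue directly from the definition of $c^*$ as the spreading speed.

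In the second stage, we define the map $\mathcal G(k)(t):=\mu\Psi^k_x(t,0)$ on the set $\{k>0 \text{ periodic},\ \bar k<c^*\}$ and seek a fixed point $k=\mathcal G(k)$. A Schauder-type approach on a Hölder space is awkward because of the boundary $\bar k=c^*$. We therefore follow the strategy of \cite{DMW25} and reduce to a one-parameter shooting, as follows.

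\begin{enumerate}[(i)]
\item Comparison shows that $\Psi^k$ is decreasing in $k$ in the sense that $k_1\le k_2$ implies $\Psi^{k_1}\ge\Psi^{k_2}$. Since both vanish at $x=0$, this gives $\Psi^{k_1}_x(\cdot,0)\ge\Psi^{k_2}_x(\cdot,0)$, so $\mathcal G$ is order-reversing.
\item For a fixed point we need $k'$ determined self-consistently. Instead, we pose the Stefan problem on the half-line in moving coordinates: for each $T$-periodic orbit, solve the time-dependent problem with $k(t)=\mu\Psi_x(t,0)$ built in. Equivalently, we consider the free boundary problem $v_t=dv_{xx}+f$ on $(0,\infty)$ in the frame $x-h(t)$, starting from $v\equiv1$ data, with the front moving by the Stefan law.
\item Comparison for this one-front problem shows that the solution at times $nT$ is monotone in $n$. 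Hence $(v(nT+\cdot,\cdot),\,h'(nT+\cdot))$ converges.
\item The limit gives a periodic pair $(k,\Psi)$, and $\bar k<c^*$ follows from the stage-one dichotomy, since $\Psi\not\equiv0$ because the boundary gradient stays bounded below.
\end{enumerate}

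Uniqueness of the pair comes from the order-reversing property. Given two solutions $(k_i,\Psi_i)$, we slide $\Psi_1(t,x+\int_0^t(k_1-\bar k_1))$ against the second solution, and the strong maximum principle at the boundary forces equality.

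The main obstacle is step (iii)--(iv): obtaining monotonicity and nondegeneracy of the shooting limit without the strong KPP property that was used in \cite{DGP}. We would handle this by trapping the limit between sub- and supersolutions built from $\Psi^{k}$ for constant $k$ slightly below and above the candidate average. The Hölder continuity of $k$ then comes from parabolic boundary estimates together with $k=\mu\Psi_x(t,0)$.
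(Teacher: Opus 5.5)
\textbf{The gap is in steps (ii)--(iii) of your existence argument.} No comparison principle makes the moving-frame profile monotone from one period to the next. Write $V(t,z)=v(t,h(t)-z)$. The problem becomes
\[
V_t-dV_{zz}+\mu V_z(t,0)V_z=f(t,V),\qquad V(t,0)=0,
\]
and its drift is determined by the solution itself. Comparison in the fixed frame gives only $v_1\le v_2$ and $h_1\le h_2$, that is, $V_1(t,z)\le V_2(t,z+h_2(t)-h_1(t))$. Since $V_2$ increases in $z$, this is strictly weaker than $V_1\le V_2$, so an ordering between $V(0,\cdot)$ and $V(T,\cdot)$ is not propagated to later periods. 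Showing that $(v(nT+\cdot,\cdot),h'(nT+\cdot))$ converges to a periodic pair is essentially the one-front analogue of Theorem~\ref{thm-exact}. The paper proves that result only by building upper and lower solutions from the semi-wave $(k,\Psi)$, so step (iii) is circular. Trapping between constant-speed semi-waves bounds $h$ but gives no convergence. Separately, the initial data $v\equiv1$ is incompatible with $v(t,h(t))=0$.

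\textbf{The missing idea} is that Schauder's theorem, which you set aside, works precisely because of the sharp dichotomy you aim for in Stage 1. The paper extends $\mathcal F[k]=\mu\hat\Psi^k_x(\cdot,0)$ to every nonnegative $k\in C^{\alpha/2}_p([0,T])$, where $\hat\Psi^k$ is the maximal nonnegative solution, so $\hat\Psi^k\equiv0$ when $\bar k\ge c^*$. It works on the closed convex set $H_0=\{0\le k\le\mathcal F[0]\}$, which is invariant by the order-reversing property, and gets compactness from parabolic estimates. Continuity across $\bar k=c^*$ holds exactly because there is no semi-wave at $\bar k=c^*$. Indeed, if $\hat\Psi^{k_n}\to0$ while $\hat\Psi^k>0$, then $\bar k<c^*$, so $k+\epsilon^*$ still has a semi-wave and $\hat\Psi^{k_n}\ge\hat\Psi^{k+\epsilon^*}$ for large $n$, a contradiction.

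\textbf{Stage 1 is thin in both directions.}
\begin{itemize}
\item Existence for $\bar k<c^*$, with $c^*$ the minimal pulsating-wave speed: a compactly supported subsolution vanishing on the moving Dirichlet boundary does not follow directly from Cauchy spreading for non-KPP $f$.
\item Nonexistence: your fallback to the spreading-speed definition cannot exclude $\bar k=c^*$. What works is the sliding comparison with a traveling wave whose speed function is exactly $k$ (Lemma~\ref{TW-SW}).
\end{itemize}
The paper avoids both issues. It defines $c^*$ as the supremum of the averages $\bar k$ that admit a semi-wave, and obtains existence below $c^*$ by comparison with a time-shifted semi-wave of larger average (Lemma~\ref{lem-suff}). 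Nonexistence at $c^*$ comes from the alternative ``semi-wave or traveling wave, never both'' (Lemmas~\ref{TW-SW-0}, \ref{TW-SW}, \ref{lem-iff}).

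\textbf{Uniqueness.} Order reversal alone cannot give uniqueness of the pair, because two periodic functions $k_1,k_2$ need not be ordered. The paper compares $\Psi^1(t,x)$ with a translate of $\Psi^2$ by $I(t)=\int_0^t(k_2-k_1)$. It then applies the Hopf lemma at a time $t_0$ where $k_2(t_0)\le k_1(t_0)$ is forced: a minimum point of $I$ if $\bar k_1=\bar k_2$, and the first zero of $I$ if $\bar k_1>\bar k_2$.
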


\begin{theorem}\label{thm-exact} Suppose  that  \eqref{smooth}, $\rm{(\bf{f_p})}$ and  $\rm{(\bf{f_m})}$ hold. Let $(k,\Psi)$ be the semi-wave given by Theorem {\rm\ref{theo-semi-wave}} and $(u,g,h)$ be  a spreading  solution of \eqref{free-bound} with $u_0\in I(h_0) $. Then there exists $g^*, h^*\in\mathbb{R}$ such that 
  \[
  \begin{cases}
  \lim_{t\to\infty}\left[g(t)+\int_0^tk(s)ds\right]=g^*,~~\lim_{t\to\infty}[g'(t)+k(t)]=0,\\[2mm]
   \lim_{t\to\infty}\left[h(t)-\int_0^tk(s)ds\right]=h^*,~~\lim_{t\to\infty}[h'(t)-k(t)]=0,\\
   \end{cases}
   \]
   \[\begin{cases}
    \lim_{t\to\infty} \sup_{x\in[g(t), 0]}|u(t,x)-\Psi(t,x-g(t))|=0,\\[2mm]
    \lim_{t\to\infty} \sup_{x\in[0, h(t)]}|u(t,x)-\Psi(t,h(t)-x)|=0.\\ 
  \end{cases}
  \]
 \end{theorem}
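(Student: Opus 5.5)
The plan is to follow the strategy of \cite{DMZ} for the homogeneous case, adapted to the time-periodic setting along the lines of \cite{DMW25}. By symmetry (replace $x$ by $-x$) it suffices to treat the right front $h(t)$. The proof has three steps.

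\textbf{Step 1: preliminary bound $|h(t)-\int_0^tk(s)ds|\le C$.} This uses comparison functions built from the semi-wave $(k,\Psi)$ of Theorem \ref{theo-semi-wave}. Let $p(t)$ be the positive $T$-periodic solution of $w'=f(t,w)$.

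For an upper solution, I take
\[
\bar h(t)=\int_0^t k(s)ds+ \beta(1-e^{-\delta t})+H,\qquad \bar u=(1+\varepsilon e^{-\delta t})\,\Psi(t,\bar h(t)-x).
\]
For a lower solution, I take
\[
\underline h(t)=\int_0^tk(s)ds-\beta(1-e^{-\delta t})-H,\qquad \underline u=(1-\varepsilon e^{-\delta t})\,\Psi(t,\underline h(t)-x).
\]
Both are considered on a region $[X_0,\underline h]$ or $[X_0,\bar h]$ away from the left front. The spreading assumption gives $u\to p(t)$ locally uniformly. Using $\Psi\le 1$ together with periodic comparison, we get $\limsup u\le p$ uniformly, and $u$ is close to $p$ on large compact sets after a large time $t_0$; this lets us fit the initial ordering at $t_0$. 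Verifying the differential inequalities uses the fact that $\int_0^Tf_u(t,1)dt<0$ (suitably normalized, since $\Psi(t,\infty)=1$ while $p\not\equiv1$ in general; here one may need to work with $p$ in place of $1$, and I would check that $\Psi(t,\infty)$ is really the periodic state; this is the point where the interpretation of the normalization in (f$_m$) matters). In the region where $\Psi$ is near $0$, one uses $\Psi_x>0$ bounded below on compact sets together with the $\beta\delta e^{-\delta t}\Psi_x$ term. The Stefan condition inequality follows from \eqref{eq-phix1} plus the factor $1\pm\varepsilon e^{-\delta t}$, after adjusting $\beta$ with $\varepsilon$ small.

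\textbf{Step 2: convergence of $h(t)-\int_0^tk$ via a Liapunov/limit argument.} Set $r(t)=h(t)-\int_0^tk(s)ds$, which is bounded by Step 1. Change variables to $y=h(t)-x$ and $v(t,y)=u(t,h(t)-y)$. Parabolic estimates (after straightening the boundary) give uniform $C^{1+\alpha/2,2+\alpha}$ bounds. Take the sequence $t_n=nT+s$ and extract limits along subsequences; the limit is an entire solution $(V,H)$ of the moving-boundary problem, lying between two shifted semi-waves by Step 1.

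The key claim is that any such entire solution trapped between shifts of $\Psi$ must equal $\Psi(t,\cdot)$ with $H'=k$. I would prove this by a sliding argument: let $\ell^*$ be the minimal shift for which $\Psi(t,y-\ell)\le V$ for all $t\in\mathbb R$ (in the right coordinates). By the strong maximum principle and the Hopf lemma at the free boundary, touching at any time forces equality, and non-touching contradicts minimality using compactness in time via periodicity. This uniqueness is reminiscent of the uniqueness part of Theorem \ref{theo-semi-wave}, which I may reuse.

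Knowing all $\omega$-limits are semi-wave shifts, I then show that the shift is unique. For this I use the monotonicity trick: once $u(t_0,\cdot)$ is close to $\Psi(t_0,h(t_0)-\cdot)$, the upper and lower solutions of Step 1 can be started with arbitrarily small $\beta,\varepsilon$, giving $\limsup r-\liminf r\le C\varepsilon$. Hence $r(t)\to h^*$. Then $h'(t)-k(t)\to0$ follows from the $C^1$ convergence of $v$ at $y=0$ combined with \eqref{eq-phix1}, and the uniform convergence on $[0,h(t)]$ follows from the sandwich of Step 1 together with $\Psi(t,\infty)=1$ and $u\to p$ in the middle region.

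\textbf{Main obstacle.} The hardest part is the comparison construction near the front and in the far field simultaneously. Unlike the KPP case of \cite{DGP}, we cannot use $f(t,u)/u$ monotone, so the perturbation $(1\pm\varepsilon e^{-\delta t})\Psi$ only works where $\Psi$ is close to $1$ (using $f_u(t,1)$ averaged negative, handled by an exponentially weighted periodic eigenfunction correction rather than a constant $\delta$) or where $\Psi_x$ is bounded below. I would therefore first try splitting $[0,\infty)$ into $[0,L]$ and $[L,\infty)$. On $[L,\infty)$, I replace $\varepsilon e^{-\delta t}$ by $\varepsilon e^{-\delta t}\phi(t)$, where $\phi$ is the positive periodic principal eigenfunction of $\phi'=f_u(t,1)\phi+\lambda\phi$ with $\lambda>0$.
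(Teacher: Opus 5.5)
Your overall architecture (bound $h-\int_0^tk$, pass to an entire limit, slide semi-waves, restart barriers with small parameters) matches the paper's. Your periodic weight $\varepsilon e^{-\delta t}\phi(t)$ plays the role of the paper's $q(t)$. However, two steps fail as you state them.

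\textbf{The lower barrier has no lateral control.} Your barrier $\underline u=(1-\varepsilon e^{-\delta t}\phi(t))\Psi(t,\underline h(t)-x)$ lives on $[X_0,\underline h(t)]$. Comparison therefore needs $u(t,X_0)\ge\underline u(t,X_0)$ for every $t\ge t_0$, not only ordering at $t_0$. Since $1-\underline u(t,X_0)\le C\varepsilon e^{-\delta t}+N_1e^{-\nu(\underline h(t)-X_0)}$, this requires $1-u(t,X_0)$ to decay exponentially. In your small-$\varepsilon$ restart you would even need $1-u(t,X_0)\lesssim\varepsilon e^{-\delta(t-t_0)}$. Locally uniform convergence $u\to1$ gives no such rate, and you do not prove one. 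The upper barrier is harmless: take $g(t)$ as its left boundary, as the paper does.

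The paper avoids any lateral condition with the two-front barrier $\underline u=\Psi(t,\underline h-x+x_1)+\Psi(t,\underline h+x+x_1)-1-\tilde q(t)$ on $[-\underline h,\underline h]$. Here $x_1(t)$ is chosen so the barrier vanishes at both ends, and it is compared with $u(t+n_2T,\cdot)$ via the lower-solution version of Lemma \ref{lem-comp2}. The paper also never restarts a lower barrier.

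A minor point: under $(\mathbf{f_m})$ one has $f(t,1)\equiv0$, so the positive periodic solution is $p\equiv1$. Your hesitation about $p\not\equiv1$ is unfounded.

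\textbf{The Liouville step does not close for an arbitrary sequence $t_n=nT+s$.} Your sliding argument gives equality only when the optimal shift $Z^*$ is touched at a finite time. Such a contact is then a global minimum of the limit front $H$, so $H'=0$ there, and Hopf plus $\mu\Psi_x(t,0)=k(t)$ forces $V$ to be the semi-wave. If instead $\inf_tH=Z^*$ is only approached as $|t|\to\infty$, taking time-translates produces a different entire solution that is a semi-wave. That gives no contradiction for the original one, so ``non-touching contradicts minimality'' fails and ``all $\omega$-limits are semi-wave shifts'' is not established.

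The paper's key device is to choose $s_n$ with $H(s_n)\to\liminf_{t\to\infty}H(t)$. Then the limit front satisfies $I(t)\ge I(r^*)$, and $I'(r^*)=0$ gives $W_x(r^*,I(r^*))=-k(r^*)/\mu=-\Psi_x(r^*,0)$, which is exactly what Hopf needs. With this choice, only the upper barrier has to be restarted to get $\limsup\le\liminf$ (Step 2 of the completion). Afterwards every sequence realizes the liminf, which sidesteps your lower-restart problem.

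\textbf{The far field is also missing.} When $Z^*<\inf I$, positivity of $W-\Psi(t,Z^*-\cdot)$ on compact sets uniformly in $t$ (your ``compactness in time'') is not enough. This gap tends to $0$ as $x\to-\infty$, since both functions tend to $1$, so you cannot simply increase $Z^*$. The paper handles this with a separate far-field argument, Step 3 of Lemma \ref{lemm-gr}. It compares $W$ on $(-\infty,Z_1]$ with a half-line $T$-periodic solution and proves that solution is unique near $1$, using $\int_0^Tf_u(t,1)dt<0$. Your sketch omits this step.
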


As stated earlier, Theorem \ref{theo-semi-wave} was proved previously in \cite{DGP} for the special case $f(t,u)=u[a(t)-b(t)u]$. In that special case, the integral average of the wave speed $k(t)$ was shown to be less than $2\sqrt{d \bar{a}}$, whereas the constant $c^*$ appearing in Theorem \ref{theo-semi-wave}  lies between $2\sqrt{d \bar{a}_0}$ and $2\sqrt{d \bar{a}_1}$ (see \eqref{eq-c} and Lemma \ref{lem-ex-unex} below), where 
\[
\ a_0(t):=f_u(t,0),\ \ a_1(t):=\sup_{0<u<1}\frac{f(t,u)}{u},\]
  and for any $T-$periodic function $\alpha(t)$, its average is denoted by $\bar \alpha$; namely,
  \[
   \bar \alpha:=\frac 1T\int_0^T\alpha(t)dt.
\]
 So the characterization of $c^*$ is no longer determined by $f_u(t,0)$ in general without  the KPP condition. 

 Theorem \ref{thm-exact} gives the precise  asymptotic profile for $(u(t,x), g(t), h(t))$, to a level only achieved in previous works with homogeneous environment; see 
  \cite{DMZ,DMZ2}. 
 This is a significant improvement on the results of \cite{DGP} in time-periodic environment.
  We believe the method in this paper can   be further developed to treat the high-space dimension version of \eqref{free-bound} with radial symmetry, which will be considered in future work.
 
 %Theorems \ref{theo-semi-wave} and \ref{thm-exact} show that, although the spreading speed may increase in the absence of the KPP condition on $f$, the manner in which $u$ converges toward the semi-wave $\Phi$ remains similar.

 The rest of the paper is organized as follows.  Section~2 focuses on the existence, uniqueness and asymptotic estimate of the semi-wave solution determined by \eqref{eqn-wave} and \eqref{eq-phix1}. To obtain our existence result for the system \eqref{eqn-wave} and \eqref{eq-phix1}, it is necessary to first obtain some conditions for the existence as well as the nonexistence of positive solutions to the first equation \eqref{eqn-wave}, which requires techniques well beyond those in \cite{DMW25} and other existing works. In Section 3,  we obtain the asymptotic spreading speed and the precise  propagation profile for spreading solutions, by constructing suitable upper and lower solutions, followed by  other delicate analysis involving parabolic regularity arguments and completely solving certain limiting problems.  
 
 %To overcome the difficulties arising from the weaker condition \eqref{eq-f01}, several nontrivial modifications are introduced.

We end the introduction by mentioning some further related works on free boundary models. We refer to \cite{SLZ,LLS,W16} for more results in time-periodic media, and to \cite{DDL19,DL} and the references therein for results with spatially heterogeneous environment.
There are many related works for more general nonlinearities, in high space dimensions, or in shifting environments, or with time delay;  as a very small sample, we mention \cite{DG, DHL, DMW, KMY, DFS}, where further references can be found.

\section{Existence, uniqueness and asymptotic behavior of semi-wave}
In this section, we investigate the existence, uniqueness and qualitative properties of the semi-wave. 
To prove the  existence and uniqueness result, we begin by analyzing some associated eigenvalue problems, which enables us to define a critical value $c^*$ for the integral average of $k(t)$ as a sharp upper bound: problem \eqref{eqn-wave} admits a positive solution  if and only if $\bar{k}< c^*$.   Building on this result, we are able to further develop the arguments in \cite{DGP} and \cite{DMW25} to prove the existence of a unique positive function $k(t)$ such that the associated positive solution of \eqref{eqn-wave} satisfies \eqref{eq-phix1}. 

We would like to point out that in the KPP case (such as for the special $f(t,u)$ considered in \cite{DGP}), the critical value $c^*$ is linearly determined, which makes the existence question much easier to handle technically. For the  general monostable case here, $c^*$ is in general not linearly determined, and we need to introduce some  nontrivial techniques to treat the existence part of the problem (see Lemmas \ref{lem-ex-unex}, \ref{lem-suff},  \ref{TW-SW-0}, \ref{TW-SW} and \ref{lem-iff} below). Note also that such a critical value $c^*$ does not arise for the model of \cite{DMW25}, where the corresponding problem of \eqref{eqn-wave} has a positive solution for every $k(t)$.
\medskip

We will need the following  special comparison principle  in \cite{MDW}:
\begin{lemma}{\rm\cite[Lemma 3.2]{MDW}}\label{lem-maxi}
Assume that  $k(t)$ is a $T$-periodic nonnegative continuous function. Let $u, v\in C([0,T]\times[0,L])\cap C^{1,2}([0,T]\times(0,L))$ be two $T$-periodic bounded functions such that 
  \begin{equation}\label{eq-maxuv}
    \begin{cases}
    u_t-d u_{xx}+ k(t)u_x -f(t,  u)\geq0\geq v_t-d v_{xx}+k(t)v_x -f(t,  v),&t\in[0,T],x\in(0,L), \\
    u(t,0)\geq v(t,0),u(t,L)\geq v(t,L), &t\in[0,T],\\
 u(t,0)<u(t,x), &t\in [0,T],x\in(0,L],\\
 v(t, L)>v(t,x),   &t\in [0,T],x\in[0,L).
 \end{cases}
\end{equation}
 Then  $u\geq v$ in $[0,T]\times (0,L)$. Moreover, if $u(t,L)\not\equiv v(t,L)$, then $u>v$ in $[0,T]\times (0,L)$. 
  \end{lemma}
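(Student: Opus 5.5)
The plan is a sliding (or shooting) argument in the spatial variable, combined with the strong maximum principle for $T$-periodic parabolic problems. We compare the supersolution $u$ with spatial translates of the subsolution $v$. The hypotheses in \eqref{eq-maxuv} are exactly what makes translation possible: $u$ attains its minimum only at the left endpoint, and $v$ attains its maximum only at the right endpoint. For $s\in[0,L]$, define
\[
v^s(t,x):=v(t,x-s),\qquad x\in[s,L],
\]
which lives on the shrinking interval $[s,L]$. Since $k$, $d$ and $f$ do not depend on $x$, $v^s$ still satisfies the subsolution inequality there.

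\textbf{Step 1 (start near $s=L$).} For $s$ close to $L$, the interval $[s,L]$ is short. On it, $v^s$ takes values close to $v(t,0)\le u(t,0)$, while $u(t,x)$ stays close to $u(t,L)$. We need $u(t,x)\ge v^s(t,x)$ for $x\in[s,L]$ and all $t$.
\begin{itemize}
\item If $s=L$, the one-point domain gives $u(t,L)\ge v(t,0)$. This holds because $v(t,0)<v(t,L)\le u(t,L)$.
\item For $s$ slightly below $L$, we use continuity in $(t,x)$ and compactness of $[0,T]$. We have $v^s(t,x)\le \max_{[0,L-s]}v(t,\cdot)$, which tends to $v(t,0)$, and $u(t,x)$ tends to $u(t,L)$ uniformly. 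Since $u(t,L)>v(t,0)$ strictly for every $t$ (by the strict monotonicity hypotheses and continuity on $[0,T]$), we get $u>v^s$ on $[0,T]\times[s,L]$.
\end{itemize}

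\textbf{Step 2 (slide down).} Let $s^*:=\inf\{s\in[0,L]: u\ge v^{\sigma}\text{ on }[0,T]\times[\sigma,L]\ \forall\sigma\in[s,L]\}$, and suppose for contradiction that $s^*>0$. By continuity, $u\ge v^{s^*}$, and there is a touching point, either in the interior or on the boundary. We check the boundary first.
\begin{itemize}
\item At $x=s^*$: $v^{s^*}(t,s^*)=v(t,0)\le u(t,0)<u(t,s^*)$, using $s^*>0$ and the strict minimum property of $u$.
\item At $x=L$: $v^{s^*}(t,L)=v(t,L-s^*)<v(t,L)\le u(t,L)$, using the strict maximum property of $v$.
\end{itemize}
So both boundary inequalities are strict, uniformly in $t\in[0,T]$ by compactness.

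Now set $w=u-v^{s^*}\ge0$. It satisfies a linear periodic parabolic inequality
\[
w_t-dw_{xx}+k(t)w_x-c(t,x)w\ge0,
\]
where $c$ is the bounded difference quotient of $f$ in $u$; it is bounded by the uniform $C^1$ bound in \eqref{smooth} and the boundedness of $u,v$.

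\begin{itemize}
\item If $w$ vanishes at an interior point $(t_0,x_0)$, the strong maximum principle applied backward in time gives $w\equiv0$ on $[t_0-\tau,t_0]\times[s^*,L]$. Using periodicity to extend the time interval, we get $w\equiv0$ on the whole strip. This contradicts the strict boundary inequalities.
\item Hence $w>0$ on the compact set $[0,T]\times[s^*,L]$, so $\min w>0$. Continuity in $s$ then keeps $u>v^s$ for $s$ slightly less than $s^*$. This contradicts the definition of $s^*$.
\end{itemize}

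Therefore $s^*=0$, i.e., $u\ge v$. For the strict conclusion, take $s=0$ and apply the same strong maximum principle argument to $w=u-v$. If $w$ vanished somewhere in the interior, periodicity would force $w\equiv0$, contradicting $u(t,L)\not\equiv v(t,L)$.

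\textbf{Main obstacle.} The delicate point is the periodic strong maximum principle. On a periodic time domain there is no initial time, so an interior zero of $w$ must propagate to all times. The argument is to observe that the zero set is backward-invariant, and a backward-invariant set on the circle $\mathbb R/T\mathbb Z$ is everything. A secondary technical point is that $u,v$ are only $C^{1,2}$ in the open strip, so the boundary strictness must be combined with continuity up to the boundary to rule out touching as $x\to s^*$ or $x\to L$.
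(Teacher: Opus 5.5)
Your sliding argument is correct and complete. The paper gives no proof of this lemma; it quotes it from \cite[Lemma 3.2]{MDW}, so there is no in-paper argument to compare against. The hypotheses ($u$ has a strict minimum in $x$ at $x=0$, $v$ a strict maximum at $x=L$) are exactly the setting the Berestycki--Nirenberg sliding method is built for, and your Steps 1--2 carry it out properly:
\begin{itemize}
\item strictness at $x=s^*$ uses $s^*>0$ and the strict-minimum property of $u$;
\item strictness at $x=L$ uses $L-s^*<L$ and the strict-maximum property of $v$;
\item the continuation to $s<s^*$ handles the change of domain from $[s^*,L]$ to $[s,L]$ correctly, by uniform continuity on the compact set $[0,T]\times[0,L]$.
\end{itemize}

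Two small remarks. First, you never use $k\ge0$. That is fine, since $x$-translations commute with $\partial_t-d\partial_{xx}+k(t)\partial_x$ for any continuous $k(t)$.

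Second, you can run the periodic strong maximum principle without worrying about the regularity of the periodic extension across $t=T$. Suppose $w(t_0,x_0)=0$ with $t_0\in(0,T]$ (if $t_0=0$, use $w(T,x_0)=0$ and take $t_0=T$). The ordinary strong maximum principle on $(0,t_0]\times(s^*,L)$ gives $w\equiv0$ there. By continuity $w(0,\cdot)\equiv0$, hence $w(T,\cdot)\equiv0$, and a second application on $(0,T]\times(s^*,L)$ gives $w\equiv0$ for all $t$. The sign of the bounded coefficient $c$ plays no role, since $w\ge0$ attains the value $0$; multiplying by $e^{-\lambda t}$ with $\lambda$ large reduces to the standard case.
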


Write 
  \begin{equation}\label{eq-ka01}
    \tilde k(t):=k(t)-\bar{k}, \ \ a_0(t):=f_u(t,0),\ \ a_1(t):=\sup_{0<u<1}\frac{f(t,u)}{u}.
 \end{equation}
 Clearly, $a_0(t)\leq a_1(t)$ for all $t\in\mathbb R$.
 Denote 
\begin{equation*}
  \begin{cases}
       p_i(t):=\frac{\bar{k}^2}{4d}+\frac{\bar{k}}{2d}\tilde{k}(t)-a_i(t), \\
      \mathcal{L}_i:=\partial_t-d\partial_{xx}+\tilde k(t)\partial_x+p_i(t),
  \end{cases}i=0,1.
\end{equation*}
We then consider the following  eigenvalue problem:
  \begin{equation}\label{eq-eig}
\begin{cases}
    \mathcal{L}_0\phi=\lambda\phi, &(t,x)\in[0,T]\times \mathbb R,\\
    \phi(0,x)=\phi(T,x), & x\in\mathbb R. \\
\end{cases}
  \end{equation}
Since $\int_0^T\tilde k(t)dt=0$, it follows from  \cite[Theorems 2.7, 2.13, Proposition 2.14]{Nadin}(see also \cite[Proposition 2.2]{DGP}) that \eqref{eq-eig}  admits a principal eigenvalue $\Lambda_0$ and the associated principal eigenfunction  is positive and independent of $x$, denoted by $\phi_0(t)$. The pair $(\Lambda_0,\phi_0(t))$ satisfies
\begin{equation*}
   \phi_0'+p_0(t)\phi_0=\Lambda_0\phi_0\ \mbox{ for } t\in[0,T]; \ \ \phi_0(0)=\phi_0(T).\\
\end{equation*}
It follows easily that
\[\Lambda_0=\bar{p}_0=\frac{\bar{k}^2}{4d}-\bar{a}_0.\]
Moreover, by Propositions 2.3 of \cite{Nadin}, the principal eigenvalue $\lambda_1^m$ of 
  \begin{equation*}
\begin{cases}
    \mathcal{L}_0\phi=\lambda\phi, &(t,x)\in[0,T]\times [-m,m],\\
    \phi(t,\pm m)=0, & t\in[0,T], \\
    \phi(0,x)=\phi(T,x), & x\in[-m,m] \\
\end{cases}
  \end{equation*}
  satisfies
  \begin{equation}\label{eq-lam-lim}
   \lim_{m\to\infty} \lambda_1^m=\Lambda_0. 
  \end{equation}

Similarly,  the eigenvalue problem
  \begin{equation}\label{eq-eig3}
\begin{cases}
    \mathcal{L}_1\phi=\lambda\phi, &(t,x)\in[0,T]\times \mathbb R,\\
    \phi(0,x)=\phi(T,x), & x\in\mathbb R
\end{cases}
  \end{equation}
admits a principal eigenvalue pair $(\Lambda_1,\phi_1(t))$  and 
\[\Lambda_1=\bar{p}_1=\frac{\bar{k}^2}{4d}-\bar{a}_1.\]

We now investigate the existence and nonexistence of positive solutions to \eqref{eqn-wave}. For $i=0,1$, to stress the dependence of $\Lambda_i$ on $k(t)$, we will from now on write $\Lambda_i=\Lambda_i^k$.
\begin{lemma}\label{lem-ex-unex}
Given any nonnegative $T$-periodic function $k(t)\in C^{\alpha/2}([0,T])$,  problem \eqref{eqn-wave} admits a positive solution $\Psi^k\in C^{1,2}([0,T]\times[0,\infty))$   if $\bar k<2\sqrt{\bar a_0 d}$, and  \eqref{eqn-wave} has no positive solution lying between 0 and 1 if $\bar k\geq 2\sqrt{\bar a_1 d}$. Moreover, any positive solution $\Psi^k$ of \eqref{eqn-wave} lying between 0 and 1 satisfies 
\begin{equation}\label{prop}
 \Psi^k_x(t,x)>0, \ \Psi^k(t,\infty)=1\ \mbox{ for } (t,x)\in[0,T]\times(0,\infty). 
\end{equation}
\end{lemma}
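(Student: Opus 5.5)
The proof has three parts: existence via sub/supersolutions when $\bar k<2\sqrt{\bar a_0 d}$, then the qualitative properties \eqref{prop}, then nonexistence via an eigenfunction comparison when $\bar k\ge 2\sqrt{\bar a_1 d}$. The common tool is the Liouville-type change of variables $\Psi(t,x)=e^{\frac{\bar k}{2d}x}w(t,x)$. It turns $\Psi_t-d\Psi_{xx}+k\Psi_x$ into $e^{\frac{\bar k}{2d}x}\bigl(w_t-dw_{xx}+\tilde k w_x+(\frac{\bar k^2}{4d}+\frac{\bar k}{2d}\tilde k)w\bigr)$. Hence linearizing $f(t,\Psi)\approx a_0(t)\Psi$ gives exactly $\mathcal L_0 w$, and bounding $f(t,\Psi)\le a_1(t)\Psi$ gives $\mathcal L_1 w\le 0$.

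\emph{Existence.} If $\bar k<2\sqrt{\bar a_0 d}$, then $\Lambda_0^k<0$, so by \eqref{eq-lam-lim} we can fix $m$ with $\lambda_1^m<0$. Let $\phi^m$ be the principal eigenfunction on $[-m,m]$. Set
\[
\underline\Psi(t,x)=\varepsilon e^{\frac{\bar k}{2d}(x-m-1)}\phi^m(t,x-m-1)
\]
on $[1,2m+1]$, extended by $0$. For small $\varepsilon$ this is a $T$-periodic subsolution, because $f(t,u)\ge (a_0(t)-\delta)u$ for small $u$ and $\lambda_1^m<0$ absorbs $\delta$. The constant $1$ is a supersolution. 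On $[0,L]$ we solve the time-periodic problem with $\Psi(t,0)=0$, $\Psi(t,L)=1$. One way is to iterate the Poincaré map of the initial-boundary problem starting from $u(0,\cdot)=1$: it is monotone decreasing in the period and bounded below by $\underline\Psi$, so it converges to a periodic solution $\Psi_L\ge\underline\Psi$. Then let $L\to\infty$, using monotonicity in $L$ via comparison (Lemma \ref{lem-maxi} applies since the solutions are increasing in $x$) and interior parabolic estimates. The limit is a periodic solution $\Psi^k\in C^{1,2}$ with $\underline\Psi\le\Psi^k\le1$, so it is positive.

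\emph{Properties \eqref{prop}.} Let $\Psi$ be any positive solution with $0\le\Psi\le 1$.
\begin{enumerate}[(i)]
\item \emph{Monotonicity.} Compare $\Psi(t,x+s)$ with $\Psi(t,x)$ by a sliding argument. For large $s$, $\Psi(\cdot,\cdot+s)$ is close to $\liminf\Psi$, and the strong maximum principle together with periodicity in $t$ gives $\Psi(t,x+s)\ge\Psi(t,x)$ for all $s\ge0$; then $\Psi_x>0$ by Hopf's lemma.
\item \emph{Limit at infinity.} The limit $\Psi(t,\infty)=:P(t)$ exists and is a positive periodic solution of $P'=f(t,P)$ with $P\le1$. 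Since $\int_0^T f_u(t,0)dt>0$, zero is unstable. Together with the sign conditions in $(\mathbf{f_m})$, any positive periodic solution must then be $\equiv1$: if $P\not\equiv1$, then $P<1$ and $f(t,P)>0$ makes $P$ strictly increasing over a period, contradicting periodicity.
\end{enumerate}
I expect part (i) to be the most delicate step. If sliding proves awkward, I would instead use Lemma \ref{lem-maxi} on $[0,L]$, comparing with the monotone $\Psi_L$ constructed above.

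\emph{Nonexistence.} Suppose $\bar k\ge2\sqrt{\bar a_1 d}$, so $\Lambda_1^k\ge0$, and let $\Psi$ be a positive solution with $0\le\Psi\le1$. Then $w=e^{-\frac{\bar k}{2d}x}\Psi$ satisfies
\[
\mathcal L_1 w=e^{-\frac{\bar k}{2d}x}\bigl(f(t,\Psi)-a_1\Psi\bigr)\le0,
\]
i.e. $w$ is a positive periodic subsolution of $\mathcal L_1$ on the half line, vanishing at $x=0$. Because $\Psi\to1$, $w$ decays like $e^{-\frac{\bar k}{2d}x}$. The idea is to compare $w$ with $\tau\phi_1(t)\,x$-type supersolutions. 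Set $v(t,x)=e^{-\Lambda_1 t}\phi_1(t)\psi(x)$, or more simply note that $\phi_1(t)\,x$ satisfies
\[
\mathcal L_1(\phi_1 x)=\Lambda_1\phi_1x+\tilde k\phi_1\ge \tilde k\phi_1 .
\]
The $\tilde k$ term has no sign, so this fails; I would first remove it with the change of variable $y=x-\int_0^t\tilde k$. That is allowed since $\tilde k$ has zero mean, but it makes the boundary $x=0$ a moving boundary.

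A cleaner route is to work directly with $\Psi$ and the supersolution $\Phi_\tau(t,x)=\tau e^{\frac{\bar k}{2d}x}\phi_1(t)\,\chi$. Alternatively, and this is the version I would try first, I would use the critical decay directly:
\begin{enumerate}[(a)]
\item Multiply the $w$-inequality by $\phi_1^*(t)$, the adjoint eigenfunction (positive, $x$-independent, satisfying $-\phi_1^{*\prime}+p_1\phi_1^*=\Lambda_1\phi_1^*$).
\item Integrate over $[0,T]\times[0,R]$ against a weight $\zeta(x)$. Periodicity kills the $t$-derivative term, and the $p_1$ and $\Lambda_1$ terms combine, leaving
\[
\int\!\!\int \phi_1^*\bigl(\Lambda_1 w\zeta - dw\zeta''-\tilde k w\zeta'\bigr)+\text{boundary terms}\le -\int\!\!\int\phi_1^* e^{-\frac{\bar k}{2d}x}(a_1\Psi-f(t,\Psi)).
\]
\item Choose $\zeta(x)=x$ near $0$. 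The boundary term at $x=0$ contributes $-d\int\phi_1^*w_x(t,0)<0$ with the favorable sign. Then let $R\to\infty$ using the exponential decay of $w$.
\item Conclude that $\Lambda_1\int\!\!\int\phi_1^* w x\,dx\,dt<0$ up to $\tilde k$ cross terms. Those cross terms vanish after the time-dependent shift above, so $\Lambda_1<0$, a contradiction.
\end{enumerate}
Making this integral identity rigorous, and handling the $\tilde k$ drift via the shift $x\mapsto x-\int_0^t\tilde k$, is the main obstacle I anticipate.
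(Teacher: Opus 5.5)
Your existence argument is the paper's: the Dirichlet principal eigenfunction on $[-m,m]$ with $\lambda_1^m<0$, multiplied by $e^{\frac{\bar k}{2d}x}$ and cut off, against the supersolution $1$ on $[0,L]$, then $L\to\infty$. The nonexistence part, however, has a genuine gap, which you yourself flag as unresolved.

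\textbf{Why your integral identity fails.} In the integral version you would try first, any weight with $\zeta(0)=0$ kills the boundary contribution at $x=0$, because $w(t,0)=0$. For $\zeta(x)=x$ the whole diffusion term vanishes, since $\int_0^\infty x\,w_{xx}\,dx=w(t,0)=0$. So there is no favorable term $d\int_0^T\phi_1^*w_x(t,0)\,dt$. At the same time, a non-constant weight leaves the cross term $-\int_0^T\!\!\int_0^\infty\phi_1^*\tilde k\,\zeta'w\,dx\,dt$. For $\zeta=x$ this is $-\int_0^T\phi_1^*(t)\tilde k(t)\bigl(\int_0^\infty w(t,x)\,dx\bigr)dt$, which has no sign because $\int_0^\infty w\,dx$ depends on $t$. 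So you only get $\Lambda_1\int\!\!\int\phi_1^*xw\le$ an unsigned quantity. The proposed shift $x\mapsto x-\int_0^t\tilde k$ turns $x=0$ into a moving boundary, whose contribution to the $t$-integration by parts you have not computed. The factor $x$ you introduced is also what made your comparison with $\tau\phi_1(t)x$ fail.

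\textbf{The missing observation.} No $x$-dependent factor is needed. One has $\mathcal L_1(\tau\phi_1)=\tau\Lambda_1\phi_1\ge0$ with no drift term, $w(t,0)=0<\tau\phi_1(t)$, and $w\le e^{-\frac{\bar k}{2d}x}\to0$ as $x\to\infty$, since $\bar k\ge2\sqrt{d\bar a_1}>0$. This is exactly the paper's proof, written in the original variables with $\psi_1=e^{\frac{\bar k}{2d}x}\phi_1(t)$ (the ``cleaner route'' you mention and then drop).
\begin{itemize}
\item $\psi_1$ is a supersolution of $\partial_t-d\partial_{xx}+k\partial_x-a_1$, and $\Psi$ is a subsolution of the same operator because $f(t,u)\le a_1(t)u$. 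Moreover $\psi_1\to\infty$ while $\Psi\le1$.
\item Take the least $\tau$ with $\tau\psi_1\ge\Psi$. If $\tau_{\min}>0$, there is slack on $[x_0,\infty)$.
\item On $[0,T]\times[0,x_0]$, the strong maximum principle with periodicity makes $\tau_{\min}\psi_1-\Psi$ strictly positive, since it is positive at $x=0$ and $x=x_0$. So $\tau_{\min}$ can be lowered.
\item Hence $\tau_{\min}=0$ and $\Psi\equiv0$, a contradiction.
\end{itemize}

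\textbf{Repairing your own route.} Your identity does work with $\zeta\equiv1$. The drift term becomes $\int_0^T\phi_1^*\tilde k\,[w]_{x=0}^{x=\infty}\,dt=0$, and the diffusion term becomes $d\int_0^T\phi_1^*\Psi_x(t,0)\,dt$. You then get $\Lambda_1\int\!\!\int\phi_1^*w+d\int_0^T\phi_1^*\Psi_x(t,0)\,dt\le0$, which is impossible for $\Lambda_1\ge0$ by the Hopf lemma.

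\textbf{A secondary issue in step (i).} Your sliding argument needs $\Psi$ to be close to $1$ at large $x$ to get started. You only derive that in (ii), from the monotonicity being proved, so as written the argument is circular. The paper does not prove \eqref{prop} here either; it defers to the moving-plane and limiting arguments of \cite[Proposition~2.1]{DGP}.
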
 

\begin{proof}
We first show that problem \eqref{eqn-wave}  admits a positive solution when $\bar k<2\sqrt{\bar a_0 d}$, i.e., when $\Lambda_0^k<0$. Suppose $\Lambda_0^k<0$; by \eqref{eq-lam-lim}, $ \lambda_1^m<0$ for all large $m$. Fix such a large $m$, and let $\phi^m(t,x)$ be an associated positive eigenfunction of $ \lambda_1^m$. Then 
\[\psi^m(t,x):=e^{\frac{\bar{k}}{2d}x}\phi^m(t,x-m)\]
satisfies
 \begin{equation}\label{eq-eig2}
\begin{cases}
   \psi^m_t-d\psi^m_{xx}+k(t)\psi^m_x-a_0(t)\psi^m=\lambda_1^m\psi^m, &(t,x)\in[0,T]\times (0,2m),\\
    \psi^m(t, 0)=  \psi^m(t, 2m)=0, & t\in[0,T], \\
    \psi^m(0,x)=\psi^m(T,x), & x\in[0,2m]. \\
\end{cases}
  \end{equation}
 Since $f(t,u)$ is $ C^{1}$ in $u$ uniformly with respect to $t\in\mathbb R$, there exists  $\delta>0$ small enough such that  
\begin{equation*}
    f(t,u)\geq (f_u(t,0)+\lambda_1^m)u \ \ \text{ for }t\in[0,T], u\in[0,\delta].
\end{equation*}
Let $\epsilon>0$ be a small number such that $\epsilon\psi^m(t,x)<\delta$ for $(t,x)\in[0,T]\times[0,2m]$. Define
\begin{equation*}
 \bar{\Psi}(t,x)\equiv 1, \quad \quad 
      \underline{\Psi}(t,x):=
  \begin{cases}
\epsilon\psi^m, &(t,x)\in[0,T]\times[0,2m],\\
0, &(t,x)\in[0,T]\times(2m, \infty).\\
  \end{cases}
\end{equation*}
Then for any given $L>2m$, $\bar{\Psi}$ and  $\underline{\Psi}$ form a pair of upper and lower solutions of 
  \begin{equation}\label{eqn-finitewave}
   \begin{cases}
     v_t-dv_{xx}+k(t)v_x=f(t,v),&t\in[0,T],~x\in(0,L),\\
v(t,0)=0,~v(t,L)=1,&t\in[0,T],\\
v(0,x)=v(T,x),  &x\in[0,L].\\
   \end{cases}
  \end{equation}
By the standard upper and lower solution arguments, we know that \eqref{eqn-finitewave} admits a solution $\Psi_L$ satisfying 
\begin{center}
$ \underline{\Psi}(t,x)\leq \Psi_L(t,x)\leq1$ for $(t,x)\in[0,T]\times[0,L]$.
\end{center} 
The strong maximum principle implies  that  $ 0<\Psi_L<1$. This allows us to employ Lemma \ref{lem-maxi} to deduce that $ \Psi_L$ is the uniqueness solution of \eqref{eqn-finitewave} lying between 0 and 1, which in turn  can be used to show that $\Psi_L$ is  decreasing in $L$. Therefore, the  limit
$$\Psi^k(t,x):=\lim_{L\to\infty}\Psi_L(t,x) \text{ for } (t,x)\in[0,T]\times[0,\infty)$$ 
is well-defined,  it satisfies $\underline{\Psi} \leq \Psi^k\leq 1$ and solves \eqref{eqn-wave}.

 By the parabolic strong maximum principle and the Hopf boundary lemma we further have
\begin{center}
$0<\Psi^k(t,x)<1$, $\Psi^k_x(t,0)>0$ \  for $(t,x)\in[0,T]\times(0,\infty)$.
\end{center}
Thus   $\Psi^k$ is a positive solution of \eqref{eqn-wave}. 
Moreover, 
by employing the moving-plane method and some limiting arguments (see, e.g., \cite[Proposition~2.1]{DGP}), we can further show that \eqref{prop} holds. 

Clearly the above arguments involving the strong maximum principle and the Hopf boundary lemma apply to any positive solution of \eqref{eqn-wave}, therefore we can use the arguments in \cite{DGP} to show that \eqref{prop} holds for any positive solution of \eqref{eqn-wave} lying between 0 and 1.

Next we  show that \eqref{eqn-wave} has no positive solution lying between 0 and 1  if $\bar k\geq 2\sqrt{\bar a_1 d}$, i.e., if $\Lambda_1^k\geq 0$.  Let $(\Lambda_1,\phi_1(t))=(\Lambda_1^k, \phi_1^k(t))$ be a principal eigenvalue pair of \eqref{eq-eig3}.
Define
\begin{equation*}
  \psi_1(t,x):=e^{\frac{\bar{k}}{2d}x}\phi_1(t).
\end{equation*}
Then 
\begin{equation*}
  (\psi_1)_{t}-d (\psi_1)_{xx}+k(t) (\psi_1)_x-a_1(t)\psi_1=\Lambda_1\psi_1 \text{ for }(t,x)\in[0,T]\times\mathbb R.
\end{equation*}
Since $\Lambda_1\geq 0$,  one has $ \bar{k}\geq 2\sqrt{d\bar{a}_1}>0$, which implies
\begin{equation*}
  \psi_1(t,x)\geq \min_{t\in[0,T]}\phi_1(t)>0 \ \ \text{ for }(t,x)\in[0,T]\times[0,\infty),
\end{equation*}
and moreover,
\begin{equation}\label{eq-psioo}
  \psi_1(t,x)\to\infty \text{ as }x\to\infty\text{  uniformly in } t\in[0,T].
\end{equation}

Suppose $\Psi$ is a positive solution of \eqref{eqn-wave} satisfying $0\leq \Psi\leq 1$. Define 
\begin{equation*}
  \tau_{min}:=\min\{\tau\in(0,\infty):\tau\psi_1(t,x)\geq \Psi(t,x) \text{ for }(t,x)\in[0,T]\times[0,\infty)\}.
\end{equation*}
Clearly,  $\tau_{min}\geq 0$ is well-defined and 
\begin{equation*}
\tau_{min}\psi_1(t,x)\geq \Psi(t,x) \text{ for }(t,x)\in[0,T]\times[0,\infty).
\end{equation*} 
We claim that $\tau_{min}=0$.  Otherwise $\tau_{min}>0$, and by \eqref{eq-psioo},  there exists $x_0>0$ large enough and $\epsilon_0$ small enough such that for every $\epsilon\in[0,\epsilon_0]$,
\begin{eqnarray}\label{eq-x_0tau}
  (\tau_{min}-\epsilon)\psi_1(t,x)\geq1\geq  \Psi(t,x) \text{ for }(t,x)\in[0,T]\times[x_0,\infty).
\end{eqnarray}
Moreover, it follows from the definition of $a_1(t)$ that 
\begin{equation*}
 \Psi_t-d\Psi_{xx}+k(t)\Psi_{x}-a_1(t)\Psi\leq \Psi_t-d\Psi_{xx}+k(t)\Psi_{x}-f(t,\Psi)=0.
\end{equation*}
Due to $\Lambda_1\geq 0$, we see that  $W:=\tau_{min}\psi_1-\Psi\geq 0$ satisfies
\begin{equation*}
  \begin{cases}
    W_t-dW_{xx}+k(t)W_x-a_1(t)W\geq 0, &(t,x)\in(0,T)\times(0,x_0),\\
    W(t,0)> 0, \ \ W(t,x_0)>0, &t\in(0,T),\\
    W(0,x)=W(T,x),\ \ &x\in(0,x_0).
  \end{cases}
\end{equation*}
It then follows from the parabolic strong maximum principle that $W(t,x)>0$ in $[0,T]\times [0, x_0]$ and hence there exists $\epsilon_1<\epsilon_0$ such that for every $\epsilon\in[0,\epsilon_1]$,
\begin{equation*}
  W(t,x)>\epsilon\psi_1(t,x) \ \text{ for }(t,x)\in[0,T]\times[0,x_0].
\end{equation*}
Combining this with \eqref{eq-x_0tau}, we have for all $\epsilon\in(0,\epsilon_1)$, 
\begin{equation*}
    (\tau_{min}-\epsilon)\psi_1(t,x)\geq \Psi(t,x) \text{ for }(t,x)\in[0,T]\times[0,\infty),
\end{equation*}
which contradicts the definition of $\tau_{min}$, and hence $\tau_{min}=0$.  It follows that  $\Psi\equiv 0$, a contradiction to $\Psi(t,\infty)=1$. This completes the proof.
\end{proof}

Let $C_p^{\alpha/2}([0,T])$ denote the space of all $T$-periodic functions in $C^{\alpha/2}([0,T])$.
Define
\begin{equation}\label{eq-c}
  c^*:=\sup\{\bar{k}: k\in C_p^{\alpha/2}([0,T]) \text{ is nonnegative and such that \eqref{eqn-wave} has a positive solution} \}.
\end{equation}
  By Lemma \ref{lem-ex-unex}, the constant $c^*$ is well-defined and $2\sqrt{d\bar{a}_0}\leq c^*\leq 2\sqrt{d\bar{a}_1}$. By the definition of $c^*$, problem \eqref{eqn-wave} admits no positive solution when $\bar{k}>c^*$. Next, we show that for every nonnegative function $k\in C_p^{\alpha/2}([0,T])$ satisfying $\bar{k}<c^*$, problem \eqref{eqn-wave} has a positive solution.

\begin{lemma}\label{lem-suff}
Given any nonnegative function $k\in C_p^{\alpha/2}([0,T])$, problem \eqref{eqn-wave} admits a positive solution $\Psi$ satisfying $\Psi_x>0$ and $\Psi(t,\infty)=1$ if  $\bar{k}<c^*$.  
\end{lemma}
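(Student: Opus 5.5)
The idea is to compare $k$ with a function $k_1$ that has the same oscillating part but a larger average. Given $k$ with $\bar k<c^*$, the definition \eqref{eq-c} of $c^*$ provides a nonnegative $k_1\in C_p^{\alpha/2}([0,T])$ with $\bar k<\bar k_1\le c^*$ for which \eqref{eqn-wave} has a positive solution $\Psi_1$. The difficulty is that $k_1$ and $k$ may have completely different shapes, so we cannot just compare pointwise. To fix this, I will remove the oscillation by a change of frame. Set $K(t):=\int_0^t (k(s)-\bar k)\,ds$ and $K_1(t):=\int_0^t (k_1(s)-\bar k_1)\,ds$; both are $T$-periodic.

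Consider the moving-frame function
\[
V(t,x):=\Psi_1\bigl(t,\,x+K_1(t)-K(t)+M\bigr),
\]
with $M\ge \max_t|K_1(t)-K(t)|$. Then $V$ is $T$-periodic and nonnegative for $x\ge 0$. A direct computation gives
\[
V_t-dV_{xx}+k(t)V_x-f(t,V)=\bigl(\bar k-\bar k_1\bigr)\,(\Psi_1)_x .
\]
By Lemma \ref{lem-ex-unex}, $\Psi_1$ satisfies \eqref{prop}, so $(\Psi_1)_x>0$ and the right-hand side is $\le 0$. Hence $V$ is a lower solution of the $k$-equation on $[0,\infty)$. Moreover $V(t,0)\ge 0$, $V\le 1$, and $V$ is increasing in $x$.

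Next I run the construction from the proof of Lemma \ref{lem-ex-unex} with $\underline\Psi:=V$ and $\bar\Psi\equiv 1$ as the lower and upper solutions. One point needs care: at $x=0$ we have $V(t,0)>0$, while the boundary condition is $v(t,0)=0$. I will handle this by shifting the other way. Take $\underline\Psi(t,x):=\max\{0,\Psi_1(t,x+K_1(t)-K(t)-M)\}$, extended by $0$ where the argument is negative. This is the maximum of the lower solution $0$ and a lower solution vanishing on the curve $x=M-K_1(t)+K(t)\ge 0$, so it is a (generalized) lower solution that vanishes near $x=0$. It is also bounded by $1$, which matches the boundary datum $v(t,L)=1$.

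For each $L$, the finite problem \eqref{eqn-finitewave} then has a solution $\Psi_L$ with $\underline\Psi\le\Psi_L\le 1$. Lemma \ref{lem-maxi} gives uniqueness of $\Psi_L$ in $(0,1)$, which makes $\Psi_L$ monotone decreasing in $L$. Letting $L\to\infty$ and using parabolic estimates, I obtain a solution $\Psi\ge\underline\Psi\not\equiv 0$ of \eqref{eqn-wave}. The strong maximum principle shows that $\Psi$ is positive, and Lemma \ref{lem-ex-unex} then yields $\Psi_x>0$ and $\Psi(t,\infty)=1$.

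The main obstacle is verifying that $\underline\Psi$ really is a lower solution in the weak sense required by the upper/lower solution scheme. The concerns are the kink of the maximum along the moving curve, and the need to apply Lemma \ref{lem-maxi}, whose monotonicity hypotheses demand some care. For the kink, the maximum of two lower solutions is a lower solution in the standard weak sense. Should this cause trouble, my first fallback is to replace $\Psi_1$ by $(\Psi_1-\varepsilon)^+$. This requires checking $f(t,\Psi_1)\le f(t,\Psi_1-\varepsilon)$ plus a small error absorbed by $(\bar k_1-\bar k)(\Psi_1)_x$. That absorption is delicate near $x=\infty$, where $(\Psi_1)_x\to 0$, so there I would instead use the strict slack on compact sets together with the fact that $\Psi_L$ is close to $1$ far out.
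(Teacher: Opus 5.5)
Your argument is correct, but it uses the mean-speed gap differently from the paper.

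\textbf{The paper's route.} The paper's target speed is $k_1$, and it takes a comparison speed $k_2$ with $\overline{k_1}<\overline{k_2}<c^*$. It shifts by the full integral $I(t)=\int_0^t(k_1-k_2)\,ds$, linear drift included. Then $\Psi^2(t,x-I(t))$ is an exact solution of the $k_1$-equation on the moving half-line $x>I(t)$. The paper chooses $t_0\le 0$ with $I>0$ on $(-\infty,t_0)$ and $I(t_0)=0$. This gives a lower solution on the single period $[t_0-T,t_0]$ that is not periodic but satisfies $\underline\Psi(t_0-T,\cdot)\le\underline\Psi(t_0,\cdot)$. That ordering is exactly what the monotone Poincar\'e-map iteration needs.

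\textbf{Your route.} You subtract only the mean-zero parts $K,K_1$ of the speeds. Your $\max\{0,\Psi_1(t,x+K_1(t)-K(t)-M)\}$ is therefore genuinely $T$-periodic. The gap $\bar k<\overline{k_1}$ appears instead as the strictly negative residual $(\bar k-\overline{k_1})(\Psi_1)_x$. This spares you the choice of $t_0$ and lets you quote the standard periodic sub/supersolution theorem directly. The paper's version has only the inessential advantage of an exact solution rather than a strict inequality. Since $\Psi\ge\underline\Psi$, you also get $\Psi(t,\infty)=1$ immediately, without appealing to Lemma \ref{lem-ex-unex} for that part.

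\textbf{Your two worries.} The concerns in your last paragraph are unnecessary.
\begin{itemize}
\item Along $x=M-K_1(t)+K(t)$, the slope of your lower solution jumps from $0$ to $(\Psi_1)_x(t,0)>0$. That is the admissible convex corner for a lower solution, and the paper's own lower solution has exactly the same corner. So the $(\Psi_1-\varepsilon)^+$ fallback is not needed.
\item The hypotheses of Lemma \ref{lem-maxi} hold automatically for any two solutions of \eqref{eqn-finitewave} with values in $(0,1)$. This is forced by the boundary values $0$ and $1$. Uniqueness, and hence monotonicity of $\Psi_L$ in $L$, follows exactly as you describe.
\end{itemize}
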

\begin{proof}
Fix a nonnegative function $k_1\in C_p^{\alpha/2}([0,T])$ with $\overline{k_1}<c^*$.  By the definition of $c^*$ there exists a nonnegative function  $k_2\in C_p^{\alpha/2}([0,T])$ such that $\overline{k_1}<\overline{k_2}<c^*$, and problem \eqref{eqn-wave} admits a positive solution $\Psi^2$ when $k(t)=k_2(t)$. 
Denote
\begin{equation}\label{eq-l_i}
I_i(t):=\int_0^tk_i(s)ds \ \  \text{ for }i=1,2,  \qquad   I(t):=I_1(t)-I_2(t).
\end{equation}
Since $I_1(0)=I_2(0)$ and 
\begin{equation*}
 \lim_{t\to-\infty}\frac{I_1(t)}{t}= \overline{k_1}<\overline{k_2}=\lim_{t\to-\infty}\frac{I_2(t)}{t},
\end{equation*}
there exists $t_0\leq 0$ such that 
\begin{equation*}
  I(t)>0 \ \ \text{ for }t< t_0,  \qquad I(t_0)=0.
\end{equation*}
Define 
\begin{equation*}
  {\Psi}(t,x):=\Psi^2(t,x-I(t)) \  \text{ for } t\in[t_0-T,t_0], \ x\geq I(t).
\end{equation*}
Since $\Psi^2$ is $T$-periodic in $t$ and  strictly increasing in $x$, one has
\begin{equation*}
  \Psi^2(t_0-T,x-I(t_0-T))= \Psi^2(t_0,x-I(t_0-T))<\Psi^2(t_0,x-I(t_0)).
\end{equation*}
Thus $ {\Psi}$ satisfies
\begin{equation*}
   \begin{cases}
     \Psi_t-d\Psi_{xx}+k_1(t)\Psi_x=f(t,\Psi),&t\in[t_0-T,t_0],\ x> I(t),\\
     \Psi(t,I(t))=0, \ \ \Psi(t,\infty)=1, &t\in[t_0-T,t_0],\\
\Psi(t_0-T,x)<\Psi(t_0,x), &x\geq I(t_0-T).\\
   \end{cases}
\end{equation*}
Denote  
\begin{equation*}
\bar{\Psi}(t,x):=1, \qquad \underline{\Psi}(t,x):=\begin{cases}
  0, &t\in[t_0-T,t_0], \ 0\leq x\leq I(t),\\
   {\Psi}(t,x), &t\in[t_0-T,t_0],\  x\geq I(t).
\end{cases}
\end{equation*}
For each $L>0$,   $\bar{\Psi}$ and $\underline{\Psi}$ form a pair of upper and lower solutions of \eqref{eqn-finitewave} with $k(t)$ therein replaced by $k_1(t)$.  Using arguments similar to those in Lemma~\ref{lem-ex-unex}, we conclude that \eqref{eqn-finitewave} admits a positive solution $\tilde{\Psi}_L$ with $\underline{\Psi}(t,x)\leq \tilde{\Psi}_L\leq 1$. Moreover, the function
\begin{equation*}
  \Psi^1(t,x):=\lim_{L\to\infty}\tilde{\Psi}_L(t,x) \ \ \text{ for  }(t,x)\in[0,T]\times[0,\infty)
\end{equation*}
is well-defined, and $1\geq \Psi^1(t,x)\geq \underline \Psi(t,x)$ for $(t,x)\in[0,T]\times[0,\infty)$. It follows that $\Psi^1(t,\infty)=1$, and by the standard parabolic estimates, $\Psi^1$ solves \eqref{eqn-wave} with $k(t)$ therein replaced by $k_1(t)$.
Hence, $\Psi^1$ is a positive solution of \eqref{eqn-wave}  with  $k_1(t)$ in place of $k(t)$. By Lemma \ref{lem-ex-unex}, we further know that 
\begin{center}
$\Psi^1_x(t,0)>0$ and $\Psi^1(t,\infty)=1$\  for $(t,x)\in[0,T]\times(0,\infty)$.
\end{center}
The proof is now complete. 
\end{proof}

The following two lemmas play a key role in establishing the sharp condition for the existence of positive solutions to \eqref{eqn-wave}.

\begin{lemma}\label{TW-SW-0}
For any given nonnegative function $k\in C_p^{\alpha/2}([0,T])$, at least one of the following alternatives holds:
\begin{itemize}
\item[(a)] there exists a semi-wave solution, i.e., \eqref{eqn-wave}  has a positive solution $\Psi^k$;

\item[(b)]  there exists a traveling wave solution, i.e., the problem 
 \begin{equation}\label{eqn-trav0}
   \begin{cases}
     \Phi_t^k-d\Phi_{xx}^k+k(t)\Phi_x^k=f(t,\Phi^k), \  \Phi^k_x\geq 0, &t\in[0,T],~x\in(-\infty,\infty),\\
\Phi^k(t,-\infty)=0, \ \Phi^k(t,\infty)=1,&t\in[0,T],\\
\Phi^k(0,x)=\Phi^k(T,x), &x\in(-\infty,\infty)
   \end{cases}
  \end{equation}
admits a positive solution $\Phi^k$.
\end{itemize}
\end{lemma}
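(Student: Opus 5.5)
The plan is to approximate \eqref{eqn-wave} by the finite-interval problems \eqref{eqn-finitewave} and pass to the limit $L\to\infty$. Which alternative holds will depend on whether the approximating solutions collapse to $0$ locally near $x=0$ or not.

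\textbf{Step 1: finite problems.} For every $L>0$, the constants $0$ and $1$ form an ordered pair of lower and upper solutions of \eqref{eqn-finitewave}. Hence there is a $T$-periodic solution $\Psi_L$ with $0\le\Psi_L\le1$, and the strong maximum principle gives $0<\Psi_L<1$ in $(0,L)$. I will first show that $\Psi_L$ is strictly increasing in $x$; this follows from the moving-plane/sliding argument used for \eqref{prop}, via Lemma \ref{lem-maxi}. With monotonicity available, Lemma \ref{lem-maxi} gives that $\Psi_L$ is the unique such solution and that $\Psi_L$ is nonincreasing in $L$, exactly as in the proof of Lemma \ref{lem-ex-unex}. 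Therefore $\Psi_\infty:=\lim_{L\to\infty}\Psi_L$ exists, and by parabolic estimates it is a $T$-periodic solution of \eqref{eqn-wave} that is nondecreasing in $x$.

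\textbf{Step 2: dichotomy.} If $\Psi_\infty\not\equiv0$, the strong maximum principle gives $\Psi_\infty>0$ in $[0,T]\times(0,\infty)$. This is alternative (a), and Lemma \ref{lem-ex-unex} then yields \eqref{prop}.

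Suppose instead $\Psi_\infty\equiv0$, so that $\Psi_L\to0$ locally uniformly. Fix a level $\theta\in(0,1)$ and let $x_L\in(0,L)$ be the unique point with $\Psi_L(0,x_L)=\theta$; then $x_L\to\infty$. Set $\Phi_L(t,x):=\Psi_L(t,x+x_L)$ on $[-x_L,L-x_L]$. Interior parabolic estimates give a subsequence converging in $C^{1,2}_{loc}$ to a $T$-periodic solution $\Phi$ on $(-\infty,M)$, where $M:=\lim(L-x_L)\in(0,\infty]$. This $\Phi$ satisfies $\Phi_x\ge0$ and $\Phi(0,0)=\theta$. Since $\Phi$ is monotone in $x$, its limits as $x\to\pm\infty$ exist and, by standard arguments, are $T$-periodic solutions of $w'=f(t,w)$ with values in $[0,1]$. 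By $\rm(\bf f_m)$ the only such solutions are $0$ and $1$. The normalization $\Phi(0,0)=\theta\in(0,1)$ then forces $\Phi(\cdot,-\infty)=0$, and, when $M=\infty$, also $\Phi(\cdot,\infty)=1$. So if $M=\infty$ we have obtained alternative (b).

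\textbf{Main obstacle: excluding $M<\infty$.} In that case one obtains a monotone periodic solution $V(t,y):=\Phi(t,y+M)$ on $(-\infty,0]$ with $V(t,0)=1$ and $V(\cdot,-\infty)=0$. The boundary value $1$ need not propagate far into the interval, because the drift $k\ge0$ pushes toward $x=L$. To rule this out, I would first try to exploit the freedom in $\theta$. Let $x_L(\theta')$ denote the level points for different $\theta'$. If $L-x_L(\theta')$ stays bounded for every $\theta'\in(0,1)$, then near $y=0$ the limit $V$ has a boundary layer in which $V$ rises to $1$ steeply. I would compare this with the stability of the state $1$: by \eqref{eq-f01} there is a positive periodic $\eta(t)$ with $\eta'-f_u(t,1)\eta>0$. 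Using $1-\epsilon\eta(t)$, cut off on a long interval, as a lower solution, and applying Lemma \ref{lem-maxi}, I would try to show that $\Psi_L\ge1-\epsilon\eta$ on an interval $[L-R_\epsilon,L]$ with $R_\epsilon\to\infty$ as $\epsilon\to0$. This would contradict $L-x_L(\theta)$ staying bounded for $\theta$ close to $1$.

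If this comparison fails to close, I would modify the construction. One option is to replace the right boundary condition by $v\equiv1$ on $[L,\infty)$, i.e.\ to work on $(0,\infty)$ with initial data $\mathbf 1_{[L,\infty)}$ evolved to a periodic state. The other is to renormalize at $\theta_L\to1$ and track the location $x_L(1/2)$ simultaneously. The aim in either case is to produce a gap $L-x_L(1/2)\to\infty$ directly, after which the limiting argument of Step 2 yields $\Phi$ with $\Phi(\cdot,-\infty)=0$ and $\Phi(\cdot,\infty)=1$, that is, alternative (b).
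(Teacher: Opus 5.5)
\textbf{There is a genuine gap, and it cannot be closed within your family of approximations.} Your Step 1 and the case $\Psi_\infty\not\equiv0$ are fine; they reproduce the construction in the proof of Lemma \ref{lem-ex-unex}. The trouble is the case $\Psi_\infty\equiv0$: the situation $M<\infty$ that you want to exclude actually occurs.

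Take $f(t,u)=u(1-u)$ and a constant $k>2\sqrt d$. By Lemma \ref{lem-ex-unex} (here $a_1\equiv1$) there is no semi-wave, so $\Psi_\infty\equiv0$. By uniqueness, $\Psi_L$ is the $t$-independent solution of $-dv''+kv'=v(1-v)$, $v(0)=0$, $v(L)=1$.

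In the $(v,v')$-plane the origin is an unstable node with eigenvalues $0<\lambda_1<\lambda_2$. Since $v_L'(0)\to0$, the orbit of $v_L$ converges to the strong unstable orbit of the origin, tangent to slope $\lambda_2$. For $k>2\sqrt d$ the traveling-wave orbit leaves the origin along the weak direction $\lambda_1$. So the strong unstable orbit lies strictly above the traveling-wave orbit and crosses $\{v=1\}$ with positive slope at a finite point.

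The consequences for your argument are:
\begin{enumerate}
\item $L-x_L(\theta)$ stays bounded for every $\theta\in(0,1)$. Every limit of translates of $\Psi_L$ is either $0$ or a translate of this truncated profile, never a traveling wave.
\item $\Psi_L'(L)$ stays bounded away from $0$, so $\{x:\Psi_L\ge1-\epsilon\}$ has length $O(\epsilon)$. Your hoped-for bound $\Psi_L\ge1-\epsilon\eta$ on $[L-R_\epsilon,L]$ with $R_\epsilon\to\infty$ is therefore false.
\item Renormalizing at $\theta_L\to1$ does not help, since $L-x_L(1/2)$ also stays bounded.
\item Your other fallback (Dirichlet $0$ at $x=0$ with data $\chi_{[L,\infty)}$) tends to $0$ locally uniformly when no semi-wave exists. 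It would require tracking a moving front and proving convergence to a traveling wave, a far harder problem that you do not address.
\end{enumerate}
This is exactly the regime $\bar k>c^*$ in which Lemma \ref{lem-iff} needs alternative (b), so the gap is essential.

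\textbf{The missing idea is to impose the value $1$ at $x=+\infty$ and regularize the \emph{left} boundary instead.} Imposing $1$ at infinity means following the stable manifold of the state $1$.
\begin{enumerate}
\item The paper solves \eqref{semi-wave-n} on $(0,\infty)$ with $v(t,0)=\delta_n\searrow0$ and $v(t,\infty)=1$. Each problem has a unique positive solution $V_n$, strictly increasing in $x$ (via Lemma 3.4 of \cite{MDW} and \cite{DMW25}), and $V_n$ decreases in $n$.
\item Normalize by $\max_tV_n(t,x_n)=\frac12$. The shifted domains $(-x_n,\infty)$ always extend to $+\infty$. So the limit at $+\infty$ is a periodic solution of $w'=f(t,w)$ with maximum at least $\frac12$, hence $\equiv1$, and no analogue of your $M$ appears.
\item If $x_n\to x^*<\infty$, a uniform $C^{(1+\alpha)/2,1+\alpha}$ bound near $x=0$, together with $V_n(t,0)=\delta_n\to0$, shows the limit vanishes at $-x^*$. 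This gives (a).
\item If $x_n\to\infty$, the limit lives on $\mathbb R$ with limits $0$ and $1$ at $\mp\infty$. This gives (b).
\end{enumerate}
In the phase-plane picture above, $V_n$ is the piece of the stable orbit of $(1,0)$ above the level $\delta_n$. That orbit, traced backward, either hits $\{v=0,\,v'>0\}$, giving a semi-wave, or enters the origin, giving a traveling wave.
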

\begin{proof}
Let $\{\delta_n\}\subset (0,1/2)$ be a decreasing sequence  satisfying $\delta_n\to 0$ as $n\to\infty$, and consider the problem
 \begin{equation}\label{semi-wave-n}
   \begin{cases}
     v_t-dv_{xx}+k(t)v_x=f(t,v),&t\in[0,T],~x\in(0,\infty),\\
v(t,0)=\delta_n,~v(t,\infty)=1,&t\in[0,T],\\
v(0,x)=v(T,x),  &x\in[0,\infty).\\
   \end{cases}
  \end{equation}
Since   $0$ and $1$ are a lower and an upper solution of \eqref{semi-wave-n}, respectively, it follows from Step 1 and Step 2 in the proof of \cite[Lemma 3.4]{MDW} that,  for each $n$,  \eqref{semi-wave-n} admits a positive solution $V_n$ which is strictly increasing in $x$. By Step 3 of Case (i) in the  proof of  \cite[Theorem 1.1]{DMW25}, we further obtain that $V_n$  is the unique positive solution of \eqref{semi-wave-n} \footnote{Note that  Lemma 3.4 of \cite{MDW} and Theorem 1.1 of \cite{DMW25} are stated under the additional assumptions that $f$ is of KPP type and that $f_u(\cdot,0)>0>f_u(\cdot,1)$, respectively, but a careful inspection of their proofs shows that the arguments there do not depend on these assumptions.}. For any given $m>n$, since $V_m$ and $1$ are, respectively, a lower solution and an upper solution of \eqref{semi-wave-n}, the uniqueness of   positive solution  to \eqref{semi-wave-n}   together with  the upper and lower solution argument shows that $V_m\leq V_n$. By the strong maximum principle, we deduce  that $V_m<V_n$, namely, the sequence $\{V_n\}$ is  decreasing in $n$.  Let $x_n\in(0,\infty)$ be the unique constant such that
 \begin{equation*}
  \max_{t\in[0,T]}V_n(t,x_n)=\frac{1}{2}.
\end{equation*} 
Clearly,  $x_n$ is increasing in $n$ and $x_n\to x^*\in(0,\infty]$ as $n\to\infty$.  Denote 
\begin{equation*}
  U_n(t,x):=V_n(t,x+x_n) \  \text{ for }t\in[0,T], \ x\in[-x_n,\infty).
\end{equation*}
We consider the cases $x_n\to x^*\in(0,\infty)$  and $x_n\to\infty$ separately.

Case (i): $x_n\to x^*\in(0,\infty)$. In this case, we show that \eqref{eqn-wave} admits a positive solution $\Psi^k$.  In view of   $0\leq U_n \leq 1$, it follows from the standard parabolic interior estimates and a diagonal process of selecting subsequences that there exists some subsequence (still denoted by $\{U_n\}$) and some  function $U_\infty$ such that 
\begin{equation*}
U_n \to U_\infty \text{   in } C_{loc}^{1,2}([0,T]\times(-x^*,\infty)) \text{ as  }n\to \infty, 
\end{equation*}
 where $U_\infty$
  satisfies
  \begin{equation*}
   \begin{cases}
     U_t-dU_{xx}+k(t)U_x=f(t,U), \  0\leq U\leq 1, &t\in[0,T],~x\in(-x^*,\infty),\\
%\Phi^*(t,\infty)=1,&t\in[0,T],\\
U(0,x)=U(T,x), &x\in(-x^*,\infty).\\
   \end{cases}
  \end{equation*}
  Moreover, $U_\infty$ is monotone nondecreasing  in $x$ and 
  \begin{equation*}
  \max_{t\in[0,T]}U_\infty(t,0)=\frac{1}{2}.
\end{equation*}
Hence,  the functions
\begin{equation*}
 q(t):=\lim_{x\to \infty}U_\infty(t,x) \text{ and }U_\infty(t,-x^*):=\lim_{x\to(-x^*)^+}U_\infty(t,x)
  \end{equation*}
are well-defined. Letting $  q_n(t,x):= U_\infty(t,n+x)$, it is easily seen by standard parabolic regularity that 
$ q_n(t,x)$ converges to $  q(t)$ in $C^{1,2}_{loc}([0,T]\times \mathbb R)$, and $ q(t)$ satisfies
\begin{equation*}
  %\begin{cases}
  %p_t=f(t,p),\ 0\leq p(t)\leq \frac 12 \mbox{ for }  t\in[0,T]; &  p(0)=p(T),\\
  q_t=f(t, q),\  0\leq q(t)\leq 1 \mbox{ for } t\in [0, T]; \ q(0)=q(T),\   \max_{t\in [0, T]}q(t)\geq \frac 12.
  %\end{cases}
\end{equation*}
By ($\rm{\bf{f_p}}$) and (${\bf f_m}$), we see that  $q(t)\equiv 1$. On the other hand, by the standard parabolic estimates up to the boundary, for any fixed small  $r\in(0,1)$, there exists a constant $C=C(r)$, independent of $n$, such that
\begin{equation*}
  \|V_n\|_{C^{\frac{1+\alpha}{2},1+\alpha}([0,T]\times[0,r])}\leq C \ \text{ for all  }n.
\end{equation*}
It follows that 
\begin{equation*}
  |V_n(t,x)|=|V_n(t,x)-V_n(t,0)|\leq C|x| \text{ for all }(t,x)\in[0,T]\times[0,r] \text{ and  all   } n,
\end{equation*}
which implies that 
\begin{equation*}
  |U_n(t,x)|\leq C|x+x_n| \text{ for all }(t,x)\in[0,T]\times[-x_n,r-x_n] \text{ and all   } n,
\end{equation*}
Therefore, for sufficiently small $\delta\in(0,r)$,
\begin{equation*}
  |U_n(t,-x^*+\delta)|\leq C|-x^*+\delta+x_n| \text{ for all }t\in[0,T] \text{ and  all large  } n.
\end{equation*}
Letting $n\to\infty$ and then $\delta\to 0^+$, we obtain that 
\begin{equation*}
  U_\infty(t,-x^*)\equiv 0.
\end{equation*}
By the strong parabolic maximum principle, 
\begin{equation*}
  0<U_\infty(t,x)<1 \text{ for }t\in[0,T], \ x\in(-x^*,\infty).
\end{equation*}
Clearly, $\Psi^k(t,x):= U_\infty(t,x-x^*)$ is a solution of  \eqref{eqn-wave}.

Case (ii): $x_n\to\infty$. In this case, we prove that  \eqref{eqn-trav0} admits a positive solution $\Phi^k$.   Similarly to Case (i), up to a subsequence, we have that  
\begin{equation*}
U_n \to \tilde{U}_\infty \text{   in } C_{loc}^{1,2}([0,T]\times\mathbb R) \text{ as  }n\to \infty, 
\end{equation*}
 where $ \tilde{U}_\infty$ is monotone nondecreasing  in $x$ and  satisfies
  \begin{equation*}
   \begin{cases}
     U_t-dU_{xx}+k(t)U_x=f(t,U), \  0\leq U\leq 1, &t\in[0,T],~x\in\mathbb R,\\
U(0,x)=U(T,x), &x\in\mathbb R,\\
  \max_{t\in[0,T]}U(t,0)=\frac{1}{2}.
   \end{cases}
  \end{equation*}
Hence, the functions
\begin{equation*}
 p(t):=\lim_{x\to- \infty} \tilde{U}_\infty(t,x)  \text{ and } q(t):=\lim_{x\to \infty} \tilde{U}_\infty(t,x) 
  \end{equation*}
are well-defined and satisfy
\begin{equation*}\begin{cases}
  p_t=f(t,p),\ 0\leq p(t)\leq \frac 12 \mbox{ for }  t\in[0,T]; &  p(0)=p(T),\\
  q_t=f(t, q),\  0\leq q(t)\leq 1 \mbox{ for } t\in [0, T]; & q(0)=q(T),\   \max_{t\in [0, T]}q(t)\geq \frac 12.
  \end{cases}
\end{equation*}
It follows from  ($\rm{\bf{f_p}}$) and (${\bf f_m}$) that  $p(t)\equiv 0$ and $q(t)\equiv 1$. By the strong parabolic maximum principle, we further have
\begin{equation*}
  0<\tilde{U}_\infty(t,x)<1 \text{ for }t\in[0,T], \ x\in\mathbb R.
\end{equation*}
 Hence $\tilde{U}_\infty$ solves \eqref{eqn-trav0}.
The proof of the lemma is now complete.
\end{proof}

The function $\Phi^k(t,x)$ in   part (ii) of Lemma \ref{TW-SW-0} is known as a traveling wave for \eqref{Cauchy} with $f(t,u)$ in place of $f(u)$, and $k^*(t)$ is known as the speed function of this traveling wave. Our next result shows that a semi-wave   and a traveling wave   cannot exist simultaneously for the same nonnegative speed function $k\in C_p^{\alpha/2}([0,T])$.
 
  \begin{lemma}\label{TW-SW}
 For any given nonnegative function $k\in C_p^{\alpha/2}([0,T])$,  at most one of the alternatives in Lemma \ref{TW-SW-0} can occur.
\end{lemma}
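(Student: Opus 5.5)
Suppose, for contradiction, that for the same nonnegative $k\in C_p^{\alpha/2}([0,T])$ both a semi-wave $\Psi^k$ (a positive solution of \eqref{eqn-wave}) and a traveling wave $\Phi^k$ (a positive solution of \eqref{eqn-trav0}) exist. By Lemma \ref{lem-ex-unex}, $\Psi^k_x>0$ and $\Psi^k(t,\infty)=1$. The strong maximum principle applied to $\Phi^k_x\ge0$ gives $0<\Phi^k<1$ and $\Phi^k_x>0$. The plan is a sliding argument. Since $\Phi^k$ is defined on all of $\mathbb R$ and $\Phi^k(t,\xi)>0=\Psi^k(t,0)$ at the boundary point $x=0$, shifted copies $\Phi^k(t,x-s)$ should lie above $\Psi^k$ for $s\ll0$. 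We then slide back and locate a first touching point. The touching will be impossible because $\Psi^k$ vanishes at $x=0$ while $\Phi^k$ is positive there, and this yields the contradiction.

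\textbf{Step 1.} Show that $\Phi^k(t,x+\sigma)\ge\Psi^k(t,x)$ on $[0,T]\times[0,\infty)$ for all large $\sigma>0$. Near $x=0$ this is easy because $\Psi^k$ is small there and $\Phi^k(\cdot,\sigma)$ is close to 1. The difficulty is at $x=\infty$, where both functions tend to 1. I would use the stability of $1$, namely $\int_0^Tf_u(t,1)dt<0$. Let $\eta(t)>0$ be $T$-periodic with $\eta'-f_u(t,1)\eta\ge \gamma\eta$ for some $\gamma>0$. Then for small $\epsilon>0$, the functions $\min\{1,\Phi^k(t,x+\sigma)+\epsilon\eta(t)\}$ are upper solutions in the region where $\Phi^k\ge1-\delta$. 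This is the standard Fife--McLeod style device. I would carry out the comparison on finite intervals $[0,L]$ using Lemma \ref{lem-maxi}, whose monotonicity hypotheses hold because $\Phi^k_x>0$ and $\Psi^k_x>0$. The boundary ordering at $x=L$ would be handled by letting $L\to\infty$ together with the $\epsilon$-perturbation.

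A cleaner alternative is to use Lemma \ref{lem-maxi} directly on $[0,L]$. Take $u=\Phi^k(t,\cdot+\sigma)$, which is increasing, so $u(t,0)<u(t,x)$. Take $v=\Psi^k$ restricted to $[0,L]$, which satisfies $v(t,L)>v(t,x)$ for $x<L$. Then we need only the two boundary inequalities. At $x=0$ we have $\Phi^k>0=\Psi^k$. At $x=L$ we need $\Phi^k(t,L+\sigma)\ge \Psi^k(t,L)$. For a \emph{fixed} $\sigma>0$ this fails only if $\Psi^k$ approaches 1 faster than $\Phi^k(\cdot,\cdot+\sigma)$. I would try choosing $L_j\to\infty$ and $\sigma$ such that the inequality holds along a sequence, comparing exponential rates of approach to 1. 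Both functions satisfy the same linearization at 1, so their decay rates match. A large shift $\sigma$ multiplies $1-\Phi^k$ by a small constant, which should dominate at large $L$.

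\textbf{Step 2.} Define $\sigma_*=\inf\{\sigma\in\mathbb R:\Phi^k(t,x+\sigma')\ge\Psi^k(t,x)\ \forall\sigma'\ge\sigma\}$. This is finite, because as $\sigma\to-\infty$ we get $\Phi^k\to0$ on compact sets while $\Psi^k$ is positive in the interior. At $\sigma_*$ the inequality $\Phi^k(\cdot,\cdot+\sigma_*)\ge\Psi^k$ holds. The difference satisfies a linear periodic parabolic inequality, and at $x=0$ it is strictly positive. The strong maximum principle gives either strict inequality in the interior or identity, and identity is impossible by the boundary values. Thus strict inequality holds on compacts. The remaining issue is the possibility of "touching at infinity". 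I would exclude it by the same stability-of-1 argument near $x=\infty$, which shows that one can decrease $\sigma_*$ slightly, contradicting minimality.

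\textbf{Main obstacle.} I expect the main obstacle to be controlling the behaviour at $x=+\infty$, both in Steps 1 and 2. There both waves tend to $1$, and comparison relies on the exponential stability of $1$ from \eqref{eq-f01}. An alternative route is to use the translation $\Phi^k(t,x-I(t))$ trick from Lemma \ref{lem-suff} to compare a semi-wave and a traveling wave with speeds of slightly different average. However, since the speeds here coincide, I would commit to the sliding argument with the $\epsilon\eta(t)$ perturbation near $u=1$.
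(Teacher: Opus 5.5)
Your committed route is correct and is essentially the paper's proof. You order $\Psi^k\le\Phi^k(\cdot,\cdot+\sigma)$ for large shifts using the exponential stability of $1$, slide down to the minimal shift, get strict inequality on compacts from the strong maximum principle and $\Phi^k(t,\sigma)>0=\Psi^k(t,0)$, and exclude touching at $+\infty$ by the same stability device, which contradicts minimality. The only technical difference is how you certify the tail ordering: you apply Lemma \ref{lem-maxi} on $[0,L]$ (or on $[X,L]$ when sliding below $\sigma_*$) to $\Phi^k(t,x+\sigma)+\epsilon\eta(t)$ with a periodic weight $\eta$, then let $L\to\infty$ and $\epsilon\to0$. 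The paper instead uses the decaying weight $\varrho$ with $\varrho(T)<1$ and an initial-value comparison over one period to force $\varsigma_{\min}=0$. Your `cleaner alternative' via matching decay rates would need two-sided sharp asymptotics that are not available, so you are right to drop it.
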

\begin{proof}
Suppose, by contradiction, that (a) and (b) occur simultaneously for some nonnegative function $k\in C_p^{\alpha/2}([0,T])$. For
\begin{equation*}
  \lambda:=-\frac{1}{T}\int_0^Tf_u(s,1)ds>0,
\end{equation*}
by \eqref{smooth},  there exists $\delta>0$ such that 
\begin{equation}\label{eq-fu0}
      |f_u(t,u)-f_u(t,1)|\leq \frac{\lambda}{2}\quad \text{ for } t\in[0,T],~1-\delta\leq u\leq 1+\delta.
\end{equation}
 Define  
 \begin{equation}\label{eq-rho0}
 \varrho(t) :=e^{\frac{\lambda }{2}t+\int_{0}^tf_u(s,1)ds} \text{ for }t\in[0,T],\qquad \varsigma_0:=\frac{2\delta}{\|\varrho\|_{L^\infty([0,T])}}.
 \end{equation}
 Clearly, $\varrho(0)=1$ and $\varsigma_0\leq 2\delta$.
 Since $\Psi(\cdot,+\infty)=\Phi(\cdot,+\infty)=1$, there exists $\xi_0>0$ large enough such that 
\begin{equation}\label{eq-phi-0}
\max\{|\Phi(t,x)-1|,|\Psi(t,x)-1|\}<\varsigma_0/2 \quad \text{ for }t\in\mathbb R,~ x\geq\xi_0.
\end{equation} 
This together with  $\Phi(t,x),\Psi(t,x)\in (0,1)$ for $x>0$ implies that, for  $z_0>0$   large  and all $z\geq z_0$,
\begin{equation}\label{eq-comp0}
  \Psi(t,x)\leq \begin{cases}
    \Phi(t,x+z), &t\in[0,T], ~x\in[0,\xi_0],\\
    \Phi(t,x+z)+\varsigma_0/2, &t\in[0,T],~ x\in(\xi_0,\infty).
  \end{cases}
\end{equation}

Define
\begin{equation}\label{sigmin0}
  \varsigma_{min}:=\inf\{\varsigma>0:  \Psi(0,x)\leq \Phi(0,x+z)+\varsigma \text{ for all }x\geq 0, z\geq z_0\}.
\end{equation}
We show that  
\begin{equation}\label{eq-claim}
\varsigma_{min}=0.
\end{equation}
By \eqref{eq-comp0} and the fact that $\Psi(\cdot,\infty)=\Phi(\cdot,\infty)$=1, one has   $ \varsigma_{min}\in[0,\varsigma_0/2]$. For fixed $z\geq z_0$, define
 \[\Phi^{z}(t,x):=\Phi(t,x+z)+\varsigma_{min} \varrho(t).\]
Using \eqref{eq-rho0} and  \eqref{eq-fu0}, we have
\begin{align*}
  \mathcal{N}^1[\Phi^{z}]:&=\Phi^{z}_t-d\Phi^{z}_{xx}+k^*(t)\Phi^{z}_x-f(t,\Phi^{z})\\
  &=\varsigma_{min}  \varrho'(t)+f(t,\Phi)-f(t,\Phi^{z})\nonumber \\
&= \Big[f_u(t,1)+\frac{\lambda}{2}-\int_0^1f_u(t,\Phi+\theta \varsigma_{min}  \varrho (t)  )d\theta\Big]\varsigma_{min}  \varrho(t)\\
&\geq 0 \quad \text{ for }t\in[0,T], x\in[\xi_0,\infty).
\end{align*}
This and \eqref{eq-comp0}, \eqref{sigmin0} allow us to apply the comparison principle to $\Psi(t,x)$ and $\Phi^{z}(t,x)$ over the region $[0,T]\times(\xi_0,\infty)$ to  obtain 
\begin{equation*}
  \Psi(t,x)\leq \Phi^{z}(t,x)\text{ for }t\in(0,T], x\in (\xi_0,\infty).
\end{equation*}
In particular, 
\begin{equation}\label{eq-psi120}
  \Psi(T,x)\leq \Phi^{z}(T,x)=\Phi(T,x+z)+\varsigma_{min} \varrho(T) \text{ for } x\in (\xi_0,\infty), z\geq z_0.
\end{equation}
Since  $\Psi$ and $\Phi$ are periodic in $t$, it follows that
\begin{equation*}
  \Psi(0,x)\leq  \Phi(0,x+z)+\varsigma_{min} \varrho(T) \text{ for } x\in (\xi_0,\infty),  z\geq z_0.
\end{equation*}
This, together with \eqref{eq-comp0} and the definition of  $\varsigma_{min}$, implies that $\varsigma_{min}\leq \varsigma_{min}  \varrho(T)$. Due to $ \varrho(T)<1$, we must have $\varsigma_{\min}=0$, as desired. 

Now it follows from \eqref{eq-claim} that
\begin{equation*}
  \Psi(0,x)\leq \Phi(0,x+z) \text{ for } x\geq 0, z\geq z_0.
\end{equation*}
Define
\begin{equation*}
  z_{min}:=\inf\{\tilde{z}\in\mathbb R:\Psi(0,x)\leq \Phi(0,x+z) \text{ for } x\geq 0, z\geq \tilde{z}\}.
\end{equation*}
Since $\Phi(\cdot,-\infty)=0<\Psi(\cdot,x)$ for $x>0$, it is obvious that $z_{min}\in (-\infty,z_0]$ is well-defined.  Set   $W^z(t,x):=\Phi(t,x+z)-\Psi(t,x)$ for $z\geq z_{min}$. Then $W=W^z$  satisfies
\begin{equation*}
  \begin{cases}
    W_t-dW_{xx}+k^*(t)W_x-c(t,x)W=0, &t\in[0,T], x\in(0,\infty),\\
    W(t,0)>0, &t\in[0,T],\\
    W(0,x)\geq 0, &x\in[0,\infty),
  \end{cases}
\end{equation*}
where
\begin{equation*}
  c(t,x):=\int_0^1f_u(t,s\Psi(t,x) +(1-s)\Phi(t,x+z))ds
\end{equation*}
is bounded.
By the strong parabolic maximum principle, we have 
\begin{equation*}
  W^z(t,x)>0 \text{ for }t\in(0,T], x\in[0,\infty), z\geq z_{min},
\end{equation*}
This, combined with the periodicity of $\Psi$ and $\Phi$, yields that 
\begin{equation*}
  \Psi(t,x)< \Phi(t,x+z)\ \text{ for }t\in[0,T], x\in[0,\infty), z\geq z_{min}.
\end{equation*} 
Hence, there exists $\epsilon>0$ small enough such that  \eqref{eq-comp0} holds  for all  $z\geq z_{min}-\epsilon$. Arguing as in the proof of \eqref{eq-claim},  we can further show that 
\begin{equation*}
  \Psi(0,x)\leq \Phi(0,x+z)~~ \text{ for } x\geq 0, z\geq z_{min}-\epsilon.
\end{equation*}
But this contradicts the definition of $z_{min}$. Hence, (a) and (b) cannot occur simultaneously, and the proof is complete.
\end{proof}

We are now in a position to use $c^*$ to  determine exactly when  \eqref{eqn-wave} has a positive solution.

\begin{lemma}\label{lem-iff}
Given any nonnegative function $k\in C_p^{\alpha/2}([0,T])$, problem \eqref{eqn-wave} admits a  positive solution $\Psi^k$ if and only if $\bar{k}<c^*$.
\end{lemma}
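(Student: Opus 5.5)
The ``if'' direction is exactly Lemma \ref{lem-suff}. For the ``only if'' direction, the definition \eqref{eq-c} of $c^*$ already excludes $\bar k>c^*$. So it remains to rule out the borderline case $\bar k=c^*$. Note that $c^*\le 2\sqrt{d\bar a_1}<\infty$ by Lemma \ref{lem-ex-unex}. The plan is to argue by contradiction: suppose $\bar k=c^*$ and \eqref{eqn-wave} has a positive solution $\Psi^k$. I will then produce, for the same speed function $k$, a traveling wave solving \eqref{eqn-trav0}. This contradicts Lemma \ref{TW-SW}.

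To build the traveling wave, consider $k_\epsilon(t):=k(t)+\epsilon$ for $\epsilon\in(0,1]$. It is nonnegative, lies in $C_p^{\alpha/2}([0,T])$, and has $\overline{k_\epsilon}=c^*+\epsilon>c^*$. Hence, by the definition of $c^*$, alternative (a) of Lemma \ref{TW-SW-0} fails for $k_\epsilon$. So alternative (b) holds: there is a traveling wave $\Phi^\epsilon$ solving \eqref{eqn-trav0} with $k_\epsilon$ in place of $k$, nondecreasing in $x$ and with values in $(0,1)$. By the same reasoning as in Lemma \ref{TW-SW-0} (the strong maximum principle applied to $\Phi^\epsilon$ and its $x$-translates), $\Phi^\epsilon$ cannot be constant in $x$. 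Also $\max_t\Phi^\epsilon(t,x)$ is continuous and increases from $0$ to $1$. Therefore, after a translation in $x$, I can normalize $\max_{t\in[0,T]}\Phi^\epsilon(t,0)=\frac12$.

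Next I pass to the limit along a sequence $\epsilon_n\to0$. Since $0\le\Phi^{\epsilon_n}\le1$ and the $k_{\epsilon_n}$ are bounded in $C^{\alpha/2}$ uniformly in $n$, standard interior parabolic (Schauder) estimates give uniform $C^{1+\alpha/2,2+\alpha}$ bounds on $[0,T]\times[-R,R]$, using time-periodicity to avoid initial-time issues. A diagonal argument then gives convergence in $C^{1,2}_{loc}([0,T]\times\mathbb R)$ to some $\Phi$. The limit $\Phi$ is $T$-periodic, nondecreasing in $x$, satisfies the equation with speed $k$, takes values in $[0,1]$, and has $\max_t\Phi(t,0)=\frac12$. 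Exactly as in Case (ii) of the proof of Lemma \ref{TW-SW-0}, the limits $p(t)=\Phi(t,-\infty)$ and $q(t)=\Phi(t,+\infty)$ exist, are $T$-periodic solutions of $w'=f(t,w)$, and satisfy $0\le p\le\frac12$ and $\max q\ge\frac12$. By $(\mathbf{f_p})$ and $(\mathbf{f_m})$ this forces $p\equiv0$ and $q\equiv1$, and the strong maximum principle gives $0<\Phi<1$. Thus $\Phi$ solves \eqref{eqn-trav0} with speed function $k$. Since $\Psi^k$ also exists, alternatives (a) and (b) occur simultaneously for $k$, contradicting Lemma \ref{TW-SW}.

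The main point needing care is this limiting step. The normalization must prevent degeneration of $\Phi$ to a constant, and the $x$-limits of $\Phi$ must be identified. Both are handled by the monotonicity in $x$ together with the classification of nonnegative periodic solutions of $w'=f(t,w)$ with $0\le w\le1$: these are only $0$ and $1$, by \eqref{eq-f01}. The uniform estimates are routine because $\epsilon$ enters only as a bounded, smooth shift of the advection coefficient.
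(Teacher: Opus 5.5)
Your proof is correct and is essentially the paper's argument: perturb to $k+\epsilon$ so that the average exceeds $c^*$, extract a limit normalized by $\max_{t}\Phi(t,0)=\frac12$ that is a traveling wave for $k$ itself, and contradict Lemma \ref{TW-SW}. The only difference is cosmetic: the paper runs a diagonal extraction on the approximate solutions $V^m_{n}$ of \eqref{semi-wave-n}, whereas you first invoke the conclusion of Lemma \ref{TW-SW-0} for each $k_\epsilon$, which is slightly cleaner. One small correction: ruling out periodic solutions of $w'=f(t,w)$ strictly between $0$ and $1$ uses the sign of $f$ on $(0,1)$, not \eqref{eq-f01}.
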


\begin{proof}
  By Lemma \ref{lem-suff} and the definition of $c^*$, it suffices  to show that for any given nonnegative function $k\in C_p^{\alpha/2}([0,T])$, problem \eqref{eqn-wave} admits no bounded positive solution whenever $\bar{k}=c^*$. 
  
 Suppose that $k_*\in C_p^{\alpha/2}([0,T])$ is a nonnegative function  such that $\overline{k_*}=c^*$. Let $\{\epsilon_n\}\subset  (0,1)$ and $\{\delta_n\}\subset (0,1/2)$ be two sequences such that $\epsilon_n\searrow  0$ and $\delta_n\searrow 0$ as $n\to\infty$, and  let $V^m_n$ be the unique solution of \eqref{semi-wave-n} with $k(t)=k_m(t):=k_*(t)+\epsilon_m$. By the proof of Lemma \ref{TW-SW-0}, for each $n,m\geq 1$, $V^m_n$ is strictly increasing in $x$ and there exists a unique $x^m_n\in(0,\infty)$ such that 
\begin{equation*}
 \max_{t\in[0,T]} V^m_n(t,x^m_n)=\frac{1}{2}.
\end{equation*}
 Since $\overline{k_m}>c^*$, \eqref{eqn-wave} admits no positive solution with $k=k_m$. It follows from the proof of Lemma \ref{TW-SW-0}  that   $x^m_n\to\infty$ as $n\to\infty$ for each $m$.
Therefore, we can find a subsequence $\{n_m\}$  of positive integers  such that $n_m\geq m$ and  $x^m_{n_m}>m$. So $  x^m_{n_m}\to\infty$ as $m\to\infty$. 
Since  $0\leq V^m_{n_m}\leq 1$ and $k_*\leq k_m\leq k_*+1$,  an argument similar to that in the proof of Lemma \ref{TW-SW-0} shows that, up to a subsequence, 
\begin{center}
$V^m_{n_m}(t,x+x^m_{n_m})\to V_\infty(t,x)$ as $m\to\infty$ in $C_{loc}^2([0,T]\times\mathbb R)$,
\end{center} 
where $V_\infty$ is nondecreasing in $x$  and  
$$ \max_{t\in[0,T]}V_\infty(t,0)=\frac{1}{2}.$$
Thus, we can argue as in the proof of Lemma \ref{TW-SW-0} to conclude that $V_\infty$ is a positive  solution of \eqref{eqn-trav0} with   $k=k_*$. By Lemma \ref{TW-SW}, \eqref{eqn-wave} admits no positive solution with $k=k_*$.
\end{proof}

\begin{proof}[{\bf Proof of Theorem \ref{theo-semi-wave}}]
By Lemma \ref{lem-suff}, for any given nonnegative function $k\in C_p^{\alpha/2}([0,T])$ with $\bar{k}<c^*$, problem \eqref{eqn-wave} admits a positive solution $\Psi^k(t,x)$, which satisfies \eqref{prop}. As in Step c of  Case (i) in Step 2 below, one concludes that $\Psi^k$ is the unique positive solution of \eqref{eqn-wave} for each given such $k$. By an argument analogous to that in the proof of \cite[Proposition~2.1]{DGP},  we see  that $\Psi^{k}$ is  decreasing in $k(t)$ in the following sense: 
\begin{eqnarray}\label{eq-nondec}
\ \ \ k_1\leq, \not\equiv k_2\implies   \Psi^{k_1}(t,x)>  \Psi^{k_2}(t,x),\   \Psi_x^{k_1}(t,0)>\Psi_x^{k_2}(t,0)\ \text{ for }t\in[0,T],\ x\in(0,\infty).
\end{eqnarray}

For clarity, we divide the rest of the proof into two steps.

{\bf Step 1:} \underline{Existence}.
We  show that for every $\mu>0$, there exists a positive function $k\in C_p^{\alpha/2}([0,T])$  with $0<\bar{k}< c^*$ such that  \eqref{eqn-wave} has a positive solution $\Psi:=\Psi^k$ and \eqref{eq-phix1} holds. 

For any given nonnegative $k\in C_p^{\alpha/2}([0,T])$, let $\hat{\Psi}^k$ be the unique maximal nonnegative $T$-periodic solution of \eqref{eqn-wave}.   By the strong maximum principle, uniqueness of positive solution to \eqref{eqn-wave} and Lemma \ref{lem-ex-unex}, we see that $\hat{\Psi}^k \equiv \Psi^k$ whenever \eqref{eqn-wave} has a nontrivial nonnegative solution $\Psi^k$; otherwise, $\hat{\Psi}^k \equiv 0$. Let
\begin{equation*}
  H:=\{k\in C^{\alpha/2}_p([0,T]):k\geq0 \}
\end{equation*}
and  $\mathcal{F}$ be the operator given by
\begin{equation*}
  \mathcal{F}[k]=\mu \hat{\Psi}_x^k(\cdot,0) \ \text{ for }k\in H.
\end{equation*}

Taking $k(t)\equiv0$, we easily see that $\Lambda^{0}_{0}=-\bar{a}_0<0$, and
it follows from Lemma \ref{lem-ex-unex} that $\hat{\Psi}^0>0$ and 
\begin{equation*}
  \mathcal{F}[0](t)=\mu \hat{\Psi}_x^0(t,0)>0 \text{ for }t\in[0,T].
\end{equation*}
Define
\begin{equation*}
k_0(t):= \mathcal{F}[0](t)  \ \text{ and } \ H_0:=\{k\in H:  k(t)\leq k_0(t)\text{ for }t\in[0,T]\}.
\end{equation*}
Then $H_0$ is closed and convex. Since $\hat{\Psi}_x^k(t, 0)$ is decreasing in $k$ by \eqref{eq-nondec}, the operator $\mathcal{F}$ maps $H_0$ to itself. In what follows, we  apply the Schauder fixed point theorem to find a  fixed point $k(t)$ of $\mathcal{F}$ in $H_0$, which clearly gives a solution pair $(k(t), \hat{\Psi}^k(t, x))$ to \eqref{eqn-wave} and \eqref{eq-phix1}. To achieve this, it suffices to show that $\mathcal{F}$ is continuous and $\mathcal{F}[H_0]$ is precompact. 

To prove the continuity of $\mathcal{F}$, let $\{k_n\}\subset H_0 $ be any sequence such that $k_n\to k$ in $H_0$, and we want to show that 
\begin{equation}\label{eq-con}
  \mathcal{F}[k_n]\to\mathcal{F}[k] \text{ in }C^{\alpha/2}([0,T]) \text{ as }n\to\infty.
\end{equation}
As $0\leq \hat{\Psi}^{k_n}\leq 1$, it follows from the standard parabolic estimates and a diagonal process of selecting subsequences that, up to extraction of a subsequence, 
\begin{equation*}
  \hat{\Psi}^{k_n}\to U \text{   in } C_{loc}^{1,2}([0,T]\times[0,\infty)) \text{ as }n\to\infty,
\end{equation*}
where $U$ is a nonnegative solution of \eqref{eqn-wave}. By the definition of $\mathcal{F}$, it suffices to prove that $U\equiv \hat{\Psi}^k$.  If $U\not\equiv0$, the strong parabolic maximum principle yields that $U(\cdot,x)>0$ for $x>0$.  By the uniqueness of the positive solution to \eqref{eqn-wave}, it is clear that  $U\equiv \hat{\Psi}^k$. If $U\equiv0$, we need to show that \eqref{eqn-wave} admits no positive solution. Suppose by contradiction that \eqref{eqn-wave} has a positive solution $\hat{\Psi}^k$; then Lemma \ref{lem-iff} implies that $\bar{k}<c^*$. Thus, there exists a constant $\epsilon^*>0$ small enough   such that $\overline{k+\epsilon^*}<c^*$. By Lemma \ref{lem-iff},  problem  \eqref{eqn-wave} with $k$ replaced by $k+\epsilon^*$ has a unique positive solution $\hat{\Psi}^{k+\epsilon^*}$. Since $k_n\to k$ as $n\to\infty$, we see that $k_n<k+\epsilon^*$ for all large $n$.   Then Lemma \ref{lem-iff} shows that $\hat{\Psi}^{k_n}$ is the unique positive solution of \eqref{eqn-wave} with $k=k_n$ for all large $n$. It follows from \eqref{eq-nondec} that $\hat{\Psi}^{k_n}\geq \hat{\Psi}^{k+\epsilon^*} $ for all large $n$. This contradicts the fact that $\hat{\Psi}^{k_n}\to 0$ in  $C_{loc}^{1,2}([0,T]\times[0,\infty))$ as $n\to\infty$. Therefore,  \eqref{eq-con} holds and  $\mathcal{F}$ is continuous on $H_0$.

We next show that $\mathcal F[H_0]$ is precompact on $H_0$. Let $\{k_n\}$ be any bounded sequence in $H_0$.  
 We denote $\hat{\Psi}^{k_n}(t,x)$ by $\hat{\Psi}^{n}(t,x)$ for simplicity. Since
\begin{equation*}
  0\leq k_n(t)\leq k_0(t), \  0\leq \hat{\Psi}^n(t,x)\leq 1 \text{ for }t\in[0,T], \ x\in[0,\infty),
\end{equation*}
we see that $k_n$ and $\hat{\Psi}^n$ are uniformly bounded in $L^\infty$ with respect to $n$. By the standard parabolic $L^p$ estimates, for any $p>1$ and compact set $D\subset [0,T]\times[0,\infty)$, $\|\hat{\Psi}^n\|_{W_p^{1,2}(D)}$ is  uniformly bounded with respect to $n$.  It then follows from the Sobolev embedding theorem and a diagonal process of selecting subsequences that, up to extraction of a subsequence, \begin{equation*}
  \hat{\Psi}^n\to \tilde{\Psi} \text{   in } C_{loc}^{(1+\alpha)/2,1+\alpha}([0,T]\times[0,\infty)) \text{ as }n\to\infty \mbox{ for some } \alpha\in (0,1).
\end{equation*}
Hence, 
\[\mathcal{F}[k_n]=\mu \hat{\Psi}_x^n(\cdot,0)\to\mu\tilde{\Psi}_x(\cdot,0) \ \text{ in } C^{ \alpha/2}([0,T]).\]
This implies that $\mathcal{F}[H_0]$ is precompact. Therefore, by the Schauder fixed point theorem, there exists $k^*\in H_0$ such that $\mathcal{F}[k^*]=k^*$. Clearly,  $k^*\not\equiv 0$ since $\mathcal{F}[0]=k_0>0$. Hence, $\hat{\Psi}_x^*(\cdot,0)\geq,\not\equiv 0$ and $\hat{\Psi}^{k^*}$ is a nontrivial nonnegative solution. From the strong parabolic maximum principle and Hopf boundary lemma, we see that 
\begin{equation*}
  k^*(t)=\mu \hat{\Psi}_x^{k^*}(t,0)>0 \text{ for }t\in[0,T],
\end{equation*}
and $\hat{\Psi}^{k^*}= \Psi^{k^*}$ is the positive solution of \eqref{eqn-wave} with $k=k^*$. This completes Step 1.

{\bf Step 2:} \underline{Uniqueness}. We show that the pair $(k^*,\Psi^{k^*})$ obtained in Step 1 is  unique.
This can be proved by adapting the arguments in the proof of \cite[Theorem 1.1]{DMW25}.  Since the required changes are rather obvious, we only sketch the main steps.

 Suppose that both $(k_1,\Psi^1)$ and $(k_2,\Psi^2)$ solve  \eqref{eqn-wave} and \eqref{eq-phix1},  and so we have
 \begin{equation}\label{eq-phi12}
    \mu\Psi^i_x(t,0)= k_i(t)\quad  \text{ for }t\in[0,T]\text{ and } i=1,2.
 \end{equation}
Denote 
\begin{equation*}
  \lambda:=-\frac{1}{T}\int_0^Tf_u(s,1)ds>0.
\end{equation*}
Owing to the regularity of $f$ in $u$,  there exists $\delta>0$ such that 
\begin{equation}\label{eq-fu}
      |f_u(t,u)-f_u(t,1)|\leq \frac{\lambda}{2}\quad \text{ for } t\in[0,T],~1-\delta\leq u\leq 1+\delta.
\end{equation}
Set
\begin{equation*}
  \overline{k_i}:=\frac{1}{T}\int_{0}^{T}k_i(t)dt, ~~I_i(t):=\int_0^tk_i(s)ds \quad \text{ for }i=1,2
\end{equation*}
and 
\begin{equation*}
  I(t):=I_2(t)-I_1(t).
\end{equation*}
%For any $\sigma\in(0,\sigma_0]$, the function $\Phi^{i,\sigma}(t,x):=\Phi^i(t,x)+\sigma \rho(t), i=1,2,$ satisfies
%\begin{align}\label{eq-uppso}
 % \mathcal{N}^i[\Phi^{i,\sigma}]&:=\Phi^{i,\sigma}_t-d\Psi^{i,\sigma}_{xx}-k_i(t)\Psi^{i,\sigma}_x-f(t,\Psi^{i,\sigma})=\sigma \rho_t+f(t,\Psi^i)-f(t,\Psi^{i,\sigma})\nonumber \\
%&\geq [f_u(t,1)+\frac{\lambda}{2}-\int_0^1f_u(t,\Psi^{i}+\theta \sigma \rho(t))d\theta]\sigma q(t)\geq 0 ~~\text{ for }t\in[0,T], x\in(\xi_0,\infty).
%\end{align}
% Hence,  $\Psi^{i,\sigma}, i=1,2$, is an upper solution of $  \mathcal{N}^i[u]=0$ in $[0,T]\times(\xi_0,\infty)$.
We are going to show that $k_1\equiv k_2$ and $\Psi_1\equiv\Psi_2$ by dividing the arguments into two cases.

\textbf{Case (i):} $\overline{k_1} =\overline{k_2}$. In this case, we deduce that $k_1\equiv k_2$ and $\Psi^1\equiv\Psi^2$.

Due to  $\overline{k_1} =\overline{k_2}$,  we see that $I(t)$ is $T$-periodic in $t$ and $I(0)=I(T)=0$, and hence there exists $t_0\in[0,T]$ such that 
\begin{equation*}
M_0:=\min_{t\in \mathbb R}I(t)=I(t_0+nT)\leq 0 \text{ for }n\in\mathbb Z.
\end{equation*}
 It follows that $ I(t)-M_0\geq 0$ for  $t\in\mathbb R$ and  $I(t_0+nT)-M_0=0$ for $n\in\mathbb Z$. Thus, 
 \begin{equation}\label{eq-k12}
 I'(t_0+nT)=k_1(t_0+nT)-k_2(t_0+nT)=0 \text{ for }n\in\mathbb Z.
 \end{equation}
 Define  
 \begin{equation}\label{eq-rho}
 \varrho(t) :=e^{\frac{\lambda }{2}(t-t_0+T)+\int_{t_0-T}^tf_u(s,1)ds} \text{ for }t\in[t_0-T,t_0],\qquad \varsigma_0:=\frac{2\delta}{\|\varrho\|_{L^\infty([t_0-T,t_0])}}.
 \end{equation}
 Clearly, $\varrho(t_0-T)=1$ and $\varsigma_0\leq 2\delta$.
 Since $\Psi^i(\cdot,+\infty)=1$, there exists $\xi_0>0$ large enough such that 
\begin{equation}\label{eq-phi-1}
|\Psi^i(t,x)-1|<\varsigma_0/2 \quad \text{ for }t\in\mathbb R,~ x\geq\xi_0.
\end{equation} 
Moreover,  since $0<\Phi^i(\cdot,x)<1$ for $x>0$, there exists  $z_0>0$   large enough such that for all $ z\geq z_0$,
\begin{equation}\label{eq-comp}
  \Psi^1(t,x)\leq \begin{cases}
    \Psi^2(t,x+I(t)-M_0+z), &t\in[t_0-T,t_0], ~x\in[0,\xi_0],\\
    \Psi^2(t,x+I(t)-M_0+z)+\varsigma_0/2, &t\in[t_0-T,t_0],~ x\in(\xi_0,\infty).
  \end{cases}
\end{equation}
We complete the proof in three steps.

{ \em Step a.} We show that 
\begin{equation}\label{sigmin}
  \varsigma_{min}:=\inf\{\varsigma>0:  \Psi^1(t_0-T,x)\leq \Psi^2(t_0-T,x+z)+\varsigma \text{ for }x\geq 0, z\geq z_0\}=0.
\end{equation}
This is similar to the argument in \cite{DMW25} and we omit the details.

{\em Step b. } We show that 
\begin{equation*}
  z_{min}:=\inf\{\tilde{z}>0:\Psi^1(t_0-T,x)\leq \Psi^2(t_0-T,x+z) \text{ for } x\geq 0, z\geq \tilde{z}\}=0.
\end{equation*}
This is again very similar to the argument in \cite{DMW25} and so we omit the details.

\emph{Step c.} We show that $k_1\equiv k_2$ and  $\Psi^1\equiv \Psi^2$.

From Step b and the fact that  $I(t)-M_0\geq 0$, one has
\begin{equation*}\begin{cases}
  \Psi^1(t_0-T,x)\leq \Psi^2(t_0-T,x)=\Psi^2(t_0-T, x+I(t_0-T)-M_0)=\Psi^{2,0}(t_0-T,x) \text{ for } x\geq 0,\\
  \Psi^1(t,0)=0=\Psi^2(t,0)\leq \Psi^2(t,I(t)-M_0)=\Psi^{2,0}(t,0) \text{ for }t\in[t_0-T,t_0].
  \end{cases}
\end{equation*}
If
\begin{equation}\label{eq-IP}
  I(t)\equiv M_0 \text{ for }  t\in\mathbb R  \  \text{ and }\ \Psi^1(t_0-T,x)\equiv\Psi^2(t_0-T,x) \text{ for }x\geq 0,
\end{equation}
then $I(t)\equiv I(0)=0$ (which yields $k_1\equiv k_2$) and by the uniqueness of positive solution to \eqref{eqn-wave} for each $k$ we see that  $\Psi^1\equiv \Psi^2$, as desired.
Otherwise, by applying the parabolic strong maximum principle to $\Psi^1(t,x)$ and $\Psi^2(t,x+I(t)-M_0)$ over $(t_0-T,t_0]\times(0,\infty)$, we have
\begin{equation*}
  \Psi^1(t,x)<\Psi^{2,0}(t,x) \text{ for }t\in(t_0-T,t_0],x\in(0,\infty).
\end{equation*}
Due to 
\begin{equation*}
 \Psi^1(t_0,0)=0=\Psi^2(t_0,0)=\Psi^{2,0}(t_0,0),
\end{equation*}
it follows from the Hopf boundary lemma that 
\begin{equation*}
  \Psi^1_x(t_0,0)<\Psi^{2,0}_x(t_0,0)=\Psi^{2}_x(t_0,0),
\end{equation*}
which in turn implies $k_1(t_0)<k_2(t_0)$ by \eqref{eq-phi12}. This contradicts \eqref{eq-k12}.

\textbf{Case (ii):} $\overline{k_1} \neq \overline{k_2}$. In this case, we derive a contradiction. Without loss of generality, we assume that $\overline{k_1}>\overline{k_2}$. Then 
\begin{equation*}
    \lim_{t\to-\infty}\frac{I_1(t)}{t}=\overline{k_1}>\overline{k_2} =\lim_{t\to-\infty}\frac{I_2(t)}{t}.
\end{equation*}
Since  $I_1(0)=I_2(0)=0$, we can find  $t_0<0$ such that 
\begin{equation*}
I(t):=I_2(t)-I_1(t)>0 \text{ for }t<t_0,  \quad   \quad I(t_0)=0,    
\end{equation*}
which implies that 
\begin{equation}\label{eq-k122}
  I'(t_0)=k_2(t_0)-k_1(t_0)\leq 0.
\end{equation}
Similarly to Case (i),  there exists $z_0$ large enough so that for each $z\geq z_0$,
\begin{equation}\label{eq-p0x}
  \Psi^1(t,x)\leq \begin{cases}
    \Psi^2(t,x+I(t)+z), &t\in[t_0-T,t_0], x\in[0,\xi_0],\\
    \Psi^2(t,x+I(t)+z)+\varsigma_0, &t\in[t_0-T,t_0], x\in(\xi_0, +\infty),
  \end{cases}
\end{equation}
 where   $\varsigma_0$ and $\xi_0$ are constants given by \eqref{eq-rho} and \eqref{eq-phi-1}, respectively.
Define 
\begin{equation*}
  \tilde{\varsigma}_{min}:=\inf\{\varsigma>0:  \Psi^1(t_0-T,x)\leq \Psi^2(t_0-T,x+I(t_0-T)+z)+\varsigma\text{ for }x\geq 0, z\geq z_0\}.
\end{equation*}
As in 
 Case (i), we can show   $\tilde{\varsigma}_{min}=0$. Then, by the same argument as in Step b of Case (i), one has
 \begin{equation*}
  \Psi^1(t_0-T,x)\leq \Psi^2(t_0-T,x+I(t_0-T)+z)  \text{ for } x\in [0,\infty), z\geq 0.
\end{equation*}
Using the parabolic strong maximum principle and Hopf boundary lemma to $\Psi^1(t,x)$ and $\Psi^2(t,x+I(t))$ over $(t_0-T,t_0]\times[0,\infty)$, we  conclude that 
 \begin{equation*}
  \Psi^1(t,x)< \Psi^2(t,x+I(t)),  \  \Psi^1_x(t_0,0)<\Psi^2_x(t_0,0) \text{ for }t\in(t_0-T,t_0], x\in [0,\infty),
\end{equation*}
which implies by \eqref{eq-phi12} that 
\begin{equation*}
  k_1(t_0)<k_2(t_0).
\end{equation*}
This contradicts \eqref{eq-k122}. The proof is complete.
\end{proof}
By adapting the arguments in the proof of \cite[Lemma 3.1]{DMW25}, we have the following asymptotic estimates for the semi-wave. The detailed proof is left to the interested reader as the modifications are rather obvious.
\begin{lemma}\label{lem-1-phi}
Let  $(k,\Psi)$ be the unique semi-wave given by Theorem \ref{theo-semi-wave}, and
  \[\bar{k}:=\frac{1}{T}\int_0^Tk(s)ds,~~b_1:=-\frac{1}{T}\int_0^Tf_u(s,1)ds,\ \nu_0:=\frac{-\bar{k}+\sqrt{\bar{k}^2+4b_1d}}{2d}.\]
  Then for any $\nu\in (0,\nu_0)$, there exists $N_1=N_1(\nu)>0$ such that
  \begin{equation}\label{phi-infty}
    |1-\Psi(t,x)|+|\Psi_x(t,x)|+|\Psi_{xx}(t,x)|+|\Psi_t(t,x)|\leq N_1 e^{-\nu x} ~\text{  for  }t\in[0,T],~x\in[0,\infty).
  \end{equation}
\end{lemma}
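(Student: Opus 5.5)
The plan is to build an exponential supersolution for $w:=1-\Psi$ near $x=\infty$, and then use parabolic regularity to pass the decay to the derivatives. Set $w=1-\Psi\ge 0$. It solves
\[
w_t-dw_{xx}+k(t)w_x = -f(t,1-w)=c(t,x)w,
\]
where
\[
c(t,x)=\int_0^1 f_u(t,1-\theta w)\,d\theta .
\]
By Lemma~\ref{lem-ex-unex} we have $w(t,x)\to0$ as $x\to\infty$ uniformly in $t$. Fix $\nu\in(0,\nu_0)$ and choose $\epsilon>0$ so small that $d\nu^2-\bar k\nu-b_1+\epsilon<0$; this is possible because $\nu_0$ is the positive root of $d\nu^2+\bar k\nu-b_1=0$, and $d\nu^2+\bar k\nu<b_1$ for $\nu<\nu_0$. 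Using \eqref{smooth}, take $X_0$ large so that $|c(t,x)-f_u(t,1)|\le\epsilon$ for $x\ge X_0$.

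Next I take the comparison function $\overline w(t,x)=Ae^{-\nu x}\rho(t)$, with $\rho$ positive and $T$-periodic. Substituting, $\overline w$ is a supersolution in $x\ge X_0$ provided
\[
\rho'-(d\nu^2+\nu k(t))\rho-(f_u(t,1)+\epsilon)\rho\ge 0 .
\]
I make this an equality with an added positive constant: let
\[
\rho(t)=\exp\Big(\int_0^t\big(d\nu^2+\nu k(s)+f_u(s,1)+\epsilon+\eta\big)\,ds\Big),
\]
where $\eta>0$ is chosen so that the integrand has zero average, i.e.\ $\eta=-(d\nu^2+\nu\bar k-b_1+\epsilon)>0$. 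Then $\rho$ is $T$-periodic and positive, and the supersolution inequality holds with a margin $\eta\rho>0$. Now choose $A$ so that $\overline w(t,X_0)\ge 1\ge w(t,X_0)$ for all $t$.

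To compare $w$ and $\overline w$ on $[0,T]\times[X_0,\infty)$ in the periodic setting, I argue through the minimal multiple. Let $\tau^*=\inf\{\tau: \tau\overline w\ge w\}$. Suppose $\tau^*>1$. On the boundary $x=X_0$ the inequality $\tau^*\overline w\ge w$ is strict, and as $x\to\infty$ both functions tend to zero. This is the delicate point: the infimum might only be "attained at infinity". I handle it with the strict margin $\eta$. Consider $W=\tau^*\overline w-w$, which satisfies $W_t-dW_{xx}+kW_x-cW\ge\tau^*\eta\overline w>0$. I expect the main obstacle here is excluding a touching point at infinity. To deal with it I will instead compare with $\overline w_\sigma=Ae^{-\nu x}\rho+\sigma$ for small $\sigma>0$; this is still a supersolution because $c<0$ near $1$ for large $X_0$, via the term $-c\sigma\ge0$ (absorbed using the averaged negativity, exactly as the $\varrho$ trick in Lemma~\ref{TW-SW}). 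A contact point is then necessarily interior and finite, and the strong maximum principle over one period together with periodicity gives a contradiction. Letting $\sigma\to0$ yields $w\le \overline w\le C e^{-\nu x}$.

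Finally, I obtain the bounds on the derivatives. The function $w$ solves a linear parabolic equation with bounded coefficients and is periodic in $t$, so interior Schauder estimates on the cylinders $[0,T]\times[x-1,x+1]$ (extended periodically in $t$) give
\[
|w_x|+|w_{xx}|+|w_t|\le C\sup_{[x-2,x+2]}|w|\le C'e^{-\nu x}
\]
for $x\ge X_0+2$. On the bounded range $[0,X_0+2]$, estimates up to the boundary show that all the quantities are bounded, so enlarging $N_1$ gives \eqref{phi-infty} on all of $[0,\infty)$.
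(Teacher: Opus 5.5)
Your construction is the right one, and it matches what the paper has in mind; the paper gives no proof here and only points to an adaptation of Lemma 3.1 of its reference DMW25. The periodic factor $\rho$ with zero-mean exponent and margin $\eta$ is correct, and so are the choices of $X_0$ and $A$. One typo: the condition should read $d\nu^2+\bar k\nu-b_1+\epsilon<0$. Two steps still need repair.

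\textbf{Comparison.} Your justification of $-c\sigma\ge 0$ does not hold as stated. Closeness of $c$ to $f_u(t,1)$ only gives $c\le\epsilon$. The reason is that $(\mathbf{f_m})$ yields $f_u(t,1)\le 0$ with negative mean, not $f_u(t,1)<0$ for every $t$; for instance, take $f=a(t)u(1-u)+u(1-u)^3$ with $a\ge 0$ vanishing at some times. Also, the $\varrho$ trick of Lemma~\ref{TW-SW} produces a non-periodic correction, which does not fit the contact-point-plus-periodicity argument you describe.

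The conclusion is nevertheless true, for a simpler reason: $c=\int_0^1 f_u(t,1-\theta w)\,d\theta=-f(t,\Psi)/(1-\Psi)<0$ for every $x>0$, since $0<\Psi<1$ and $f(t,\cdot)>0$ on $(0,1)$. With this you can discard $\tau^*$ and $\sigma$ altogether. Note that $\tau^*$ for $\overline w$ alone may be $+\infty$, since its finiteness is exactly the claim.

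Argue directly instead. Suppose $W=\overline w-w$ had a negative infimum on $\mathbb R\times[X_0,\infty)$. It would be attained at a finite point with $x>X_0$, because $W(\cdot,X_0)\ge 0$, $W\to 0$ as $x\to\infty$ uniformly in $t$, and $W$ is $T$-periodic. At that point $W_t=W_x=0$, $W_{xx}\ge 0$ and $-cW<0$. This contradicts $W_t-dW_{xx}+kW_x-cW\ge\eta\overline w>0$. So there is no issue of touching at infinity.

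\textbf{Derivatives.} Interior Schauder estimates for $w_t-dw_{xx}+kw_x=cw$ require a uniform bound on the H\"older norm of $c$. That amounts to a H\"older modulus of $f_u$ in $(t,u)$, which \eqref{smooth} does not provide: $f_u$ is only uniformly continuous. Applying Schauder to $w_t-dw_{xx}+kw_x=-f(t,\Psi)$ does not give decay either, since the $t$-H\"older seminorm of $f(t,\Psi(t,x))$ need not tend to zero as $x\to\infty$.

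Use $L^p$ theory instead.
\begin{itemize}
\item Interior $W^{1,2}_p$ estimates with $p>3$ need only $c\in L^\infty$. Combined with the Sobolev embedding, they give $|w_x|\le Ce^{-\nu x}$.
\item Apply the same estimate to the difference quotients $z_h=(w(t,x+h)-w(t,x))/h$. These solve $\partial_t z_h-d\partial_{xx}z_h+k\partial_x z_h=c_h z_h$ with $|c_h|\le\max|f_u|$, so you obtain $|w_{xx}|\le Ce^{-\nu x}$ uniformly in small $h$.
\item Finally, read off $|w_t|\le Ce^{-\nu x}$ from the equation.
\end{itemize}
With these two changes your proof is complete.
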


\section{Precise propagation profile  of  \eqref{free-bound}}
%\section{Bound for $|g(t)+\int_0^tk^*(s)ds|$ and $|h(t)+\int_0^tk^*(s)ds|$}

This section is devoted to the proof of Theorem \ref{thm-exact}.    Following the method  in \cite{MDW} based on strategies developed and refined in \cite{fife77,DMZ2,D}, we first show the boundedness of  $|g(t)+\int_0^tk(s)ds|$ and $|h(t)-\int_0^tk(s)ds|$, we then prove the convergence along a time sequence $t_n\to\infty$, and finally we prove the convergence for $t\to\infty$.    Several nontrivial modifications are needed to address the difficulties caused by the weaker condition \eqref{eq-f01}.  We first collect some preliminary results.

\begin{lemma}\label{lem-comp}
Suppose that  $\tau\in(0,\infty)$ and   $(u,g,h)$ is a solution of \eqref{free-bound}. If $\bar{g},\bar{h}\in C^1([0,\tau])$,    $\Sigma_\tau=\{(t,x)\in\mathbb{R}^2:0<t\leq \tau,\bar{g}(t)< x<\bar{h}(t)\}$,  and $\bar{u}\in C(\bar{\Sigma}_\tau)\cap C^{1,2}( \Sigma_\tau)$ satisfy
 \begin{equation}\label{eq-com}
   \begin{cases}
     \bar{u}_t-d\bar{u}_{xx}\geq f(t,\bar{u}), &0<t\leq \tau, \ \bar{g}(t)<x<\bar{h}(t),\\
     h(t)>\bar{g}(t)\geq g(t),\   \bar{u}(t,\bar{g}(t))\geq u(t,\bar{g}(t)), &0<t\leq \tau,\\
       \bar{u}(t,\bar{h}(t))=0, \ \bar{h}'(t)\geq -\mu\bar{u}_x(t,\bar{h}(t)),&0<t\leq \tau,\\
    h_0<\bar{h}(0), \tilde u_0(x)\leq,\not\equiv  \bar{u}(0,x),&\bar{g}(0)\leq x\leq \bar h(0),
   \end{cases}
 \end{equation}
 where $\tilde u_0$ is the zero extension of $u_0$ from $[-h_0, h_0]$ to $\mathbb R$,
 then
 \begin{align*}
   h(t)<\bar{h}(t),\ u(t,x)< \bar{u}(t,x)\text{ for }t\in(0,\tau] \text{ and }\bar{g}(t)< x< h(t).
 \end{align*}

 \begin{proof}
  The proof is similar to that of \cite[Lemma 3.7]{DGP}, so we omit the details here.
\end{proof}
  
 \end{lemma}
 \begin{lemma}\label{lem-comp2}
  Suppose that $\tau\in(0,\infty)$ and $(u,g,h)$ is a solution of \eqref{free-bound}.  If $\bar{g},\bar{h}\in C^1([0,\tau])$ and  $\bar{u}\in C(\bar{\Sigma}_\tau)\cap C^{1,2}( \Sigma_\tau)$ with $\Sigma_\tau=\{(t,x)\in\mathbb{R}^2:0<t\leq \tau,\bar{g}(t)< x<\bar{h}(t)\}$ satisfy
 \begin{equation*}
   \begin{cases}
     \bar{u}_t-d\bar{u}_{xx}\geq f(t,\bar{u}), &0<t\leq \tau, \ \bar{g}(t)<x<\bar{h}(t),\\
   \bar{u}(t,\bar{h}(t))=0, \ \bar{h}'(t)\geq -\mu \bar{u}_x(t,\bar{h}(t)),&0<t\leq \tau,\\
   \bar{u}(t,\bar{g}(t))=0, \ \bar{g}'(t)\leq -\mu \bar{u}_x(t,\bar{g}(t)),&0<t\leq \tau,\\
    [-h_0,h_0]\subset (\bar{g}(0),\bar{h}(0)),\  \tilde u_0(x)\leq,\not\equiv  \bar{u}(0,x),&x\in[\bar g(0), \bar h(0)],
   \end{cases}
 \end{equation*}
 where $\tilde u_0$ is the zero extension of $u_0$ from $[-h_0, h_0]$ to $\mathbb R$, then
 \begin{align*}
  [g(t),h(t)]\subset (\bar{g}(t),\bar{h}(t)),  \ u(t,x)< \bar{u}(t,x)\text{ for }t\in(0,\tau] \text{ and }g(t)<x<h(t).
 \end{align*}
 \end{lemma}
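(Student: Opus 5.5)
The plan is to reduce Lemma \ref{lem-comp2} to two applications of the one-sided argument behind Lemma \ref{lem-comp}, one at each free boundary, and to use the weak-inequality version of the hypotheses only through a strictness step at the very beginning. First, since $[-h_0,h_0]\subset(\bar g(0),\bar h(0))$ and the inequalities on $\bar g,\bar h$ are only weak, I would argue on a short initial interval that the strict inclusion $[g(t),h(t)]\subset(\bar g(t),\bar h(t))$ persists for $t\in(0,\tau_0]$ by continuity. Next I would show $u<\bar u$ for $0<t\le\tau_0$ on $g(t)<x<h(t)$. Set $w=\bar u-u$; it satisfies a linear inequality $w_t-dw_{xx}\ge c(t,x)w$ with $c$ bounded (the difference quotient of $f$ in $u$, bounded by \eqref{smooth}). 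On the lateral boundaries $x=g(t)$, $x=h(t)$ we have $w=\bar u\ge 0$, and in fact $w>0$ there once we know $\bar u>0$ inside $(\bar g,\bar h)$. That positivity follows from $\tilde u_0\le,\not\equiv\bar u(0,\cdot)$ together with the strong maximum principle applied to $\bar u$ against $0$, which is a solution since $f(t,0)=0$. The initial data satisfy $w(0,\cdot)\ge,\not\equiv0$, so the strong maximum principle gives $w>0$ for $t>0$.

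The continuation argument comes next. Let $t^*\le\tau$ be the first time at which the strict inclusion fails, and suppose for instance $h(t^*)=\bar h(t^*)$; the case $g(t^*)=\bar g(t^*)$ is symmetric. On $(0,t^*]$ the argument above gives $u<\bar u$ in $(g(t),h(t))$. At the point $(t^*,h(t^*))$ both $u$ and $\bar u$ vanish, so the Hopf lemma applied to $w$ on the region $\{g(t)<x<h(t)\}$ yields $w_x(t^*,h(t^*))<0$, that is, $\bar u_x<u_x$ there. Hence
\[
h'(t^*)=-\mu u_x(t^*,h(t^*))<-\mu\bar u_x(t^*,\bar h(t^*))\le\bar h'(t^*).
\]
On the other hand, $h(t)<\bar h(t)$ for $t<t^*$ and equality at $t^*$ force $h'(t^*)\ge\bar h'(t^*)$, which is a contradiction. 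The left boundary is handled the same way using $\bar g'\le-\mu\bar u_x(t,\bar g(t))$; there the Hopf lemma gives $w_x(t^*,g(t^*))>0$, so $g'(t^*)>\bar g'(t^*)$, again contradicting the first touching. Therefore the inclusion holds on all of $(0,\tau]$, and with it the inequality $u<\bar u$.

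The main obstacle is applying the Hopf lemma on a region whose boundary $x=h(t)$ is only $C^1$ and moving. The standard remedy, as in \cite[Lemma 3.7]{DGP} and the Du--Lin literature, is to straighten the boundary with the change of variables $y=x/h(t)$, or to use the $C^{1+\alpha/2}$ regularity of $h$ known from the existence theory so that the interior ball condition holds. A secondary delicate point is the strictness at $t=0$ when $\tilde u_0=\bar u(0,\cdot)$ somewhere. I would handle it as above: the strong maximum principle makes $w>0$ instantly, and $h(0)=h_0<\bar h(0)$ gives room on the short initial interval. Since the argument is essentially identical to that of Lemma \ref{lem-comp}, with the other boundary also treated as free, I would present it briefly and refer to \cite{DGP}.
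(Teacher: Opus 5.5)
Your proposal is correct and is essentially the standard argument the paper invokes by referring to \cite[Lemma 4.1]{DGP}: a first-touching time for the free boundaries, the Hopf lemma for $w=\bar u-u$ at the touching point, and the strong maximum principle giving $u<\bar u$ inside. One small slip that does not affect the argument: the hypothesis $\tilde u_0\le,\not\equiv\bar u(0,\cdot)$ does not force $w(0,\cdot)\not\equiv0$ on $[-h_0,h_0]$, since the strict inequality might hold only outside $[-h_0,h_0]$; but $w=\bar u>0$ on the lateral boundaries for $t>0$, and that alone already gives $w>0$ in the interior, exactly as you use later.
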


\begin{proof}
  The proof is similar to that of \cite[Lemma 4.1]{DGP} and is therefore omitted.
\end{proof}

\begin{lemma}\label{lem-uhbd}
 Suppose that $(u,g,h)$ is a solution of \eqref{free-bound}. Then there exists $K_0>0$ such that 
 \begin{equation*}
  u(t,x), \  -g'(t),\  h'(t)\in (0, K_0)\  \text{ for }t\geq 0,\  g(t)<  x<h(t).
 \end{equation*} 
 Moreover, 
 \begin{equation*}
  \limsup_{t\to\infty}u(t,x)\leq 1 \text{ uniformly in } x\in\mathbb R.
 \end{equation*}
\end{lemma}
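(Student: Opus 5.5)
The bound on $u$ comes first: we compare $u$ with the solution of the ODE $w'=f(t,w)$, $w(0)=\max\{1,\|u_0\|_\infty\}$. Because $f(t,u)<0$ for $u>1$, $w$ is nonincreasing as long as $w>1$ and stays bounded by $M:=\max\{1,\|u_0\|_\infty\}$. We would like more, namely that $w(t)\to 1$. Let $z(t)$ be the unique positive $T$-periodic solution of $z'=f(t,z)$. This solution exists by \eqref{eq-f01} and the monostable structure, and the argument used for $q\equiv 1$ in Lemma~\ref{TW-SW-0} shows $z\equiv 1$. The solution $w$ is monotone along the Poincaré map, so it converges to a periodic solution lying in $[1,M]$, which must be $z\equiv1$. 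Since $u=0<w$ on the free boundaries, the parabolic comparison principle on $\{g(t)<x<h(t)\}$ gives $0<u\le w$. The strong maximum principle gives $u>0$. This proves that $u\le M$ and that $\limsup_{t\to\infty}u(t,x)\le \lim_{t\to\infty} w(t)=1$ uniformly in $x$.

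For the bound on $h'$ we follow the standard construction of \cite{DL10}, adapted as in \cite{DGP}. Let $L:=\sup_{t,\,0\le u\le M}|f(t,u)|/u$ when $u>0$; this is finite because $f(t,0)=0$ and $f$ is $C^1$ in $u$ uniformly in $t$. For a large constant $\Lambda$ (depending on $d$, $L$, $M$ and $\|u_0'\|_\infty$, $h_0$) we take the barrier on $\Omega:=\{h(t)-\Lambda^{-1}<x<h(t)\}$
\[
\bar w(t,x):=M\big[2\Lambda(h(t)-x)-\Lambda^2(h(t)-x)^2\big].
\]
On $\Omega$ we have $\bar w_t-d\bar w_{xx}\ge 2dM\Lambda^2$, using $h'>0$ and $1-\Lambda(h-x)\ge0$. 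This is at least $LM\ge f(t,u)$ once $\Lambda^2\ge L/(2d)$. On the left boundary $\bar w=M\ge u$, on $x=h(t)$ both functions vanish, and at $t=0$ we get $\bar w\ge u_0$ if $\Lambda\ge 4\|u_0'\|_\infty/(3M)$ and $\Lambda\ge 1/h_0$. Comparing $u$ with $\bar w$ on $\Omega$ gives $u\le \bar w$ there. Since $u(t,h(t))=\bar w(t,h(t))=0$, this yields $-u_x(t,h(t))\le 2M\Lambda$, hence $h'(t)\le 2\mu M\Lambda$. The case of $-g'$ is symmetric. Positivity of $-g'$ and $h'$ follows from the Hopf lemma, as in \cite{DGP}. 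Then $K_0$ can be any number exceeding both $M$ and $2\mu M\Lambda$.

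The main obstacle is the comparison on the left boundary $x=h(t)-\Lambda^{-1}$. For small $t$ this line may lie to the left of $g(t)$. We avoid this by also requiring $\Lambda^{-1}\le h_0$, so that $h(t)-\Lambda^{-1}\ge h_0-\Lambda^{-1}\ge0>g(t)$; this uses that $h$ is increasing and $g$ is decreasing. Because $u$ is only $C^{1,2}$ in the interior, the comparison is carried out in moving coordinates. Changing variables to $y=x-h(t)$ makes the domain fixed, and the same maximum principle argument then applies.
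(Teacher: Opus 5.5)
Your proposal is correct and is essentially the argument the paper defers to \cite{DGP}. Comparison with the spatially constant ODE solution gives $0<u\le\max\{1,\|u_0\|_\infty\}$ and $\limsup_{t\to\infty}u\le 1$. The Du--Lin barrier $M[2\Lambda(h(t)-x)-\Lambda^2(h(t)-x)^2]$ on a strip of width $\Lambda^{-1}$ behind the front then bounds $-u_x(t,h(t))$, and hence $h'(t)$.

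One caveat: the Hopf lemma gives $h'(t)>0$ and $g'(t)<0$ only for $t>0$. At $t=0$ one has $h'(0)=-\mu u_0'(h_0)\ge 0$, and this can vanish for $u_0\in I(h_0)$. So the strict inequality at $t=0$ is an imprecision in the statement, not a gap in your argument.
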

 \begin{proof}
The proof is similar to that in \cite{DGP}, based on the construction of appropriate upper and lower solutions. The details is left to the interested reader.
 \end{proof}
 
 Let $(k,\Psi)$ be the semi-wave given by Theorem {\rm\ref{theo-semi-wave}} and $(u,g,h)$ be  a spreading  solution of \eqref{free-bound} with $u_0\in I(h_0) $.
 In what follows we will only prove the convergence of $h(t)$ and $u(t,x)$ for $x\in [0, h(t)]$; the convergence of $g(t)$ and $u(t,x)$ for $x\in [g(t), 0]$ follows as a consequence  since 
 \begin{equation}\label{u0(-x)}
 (\tilde u(t,x), \tilde g(t), \tilde h(t)):=(u(t,-x), -h(t), -g(t))
 \end{equation}
  is a spreading solution of \eqref{free-bound} with $\tilde u(0,x)=u_0(-x)$.

\subsection{Upper bound}
\iffalse
Setting    $v:=\Psi_x$, then $v$ satisfies the following problem in the weak sense:
\begin{equation*}
  \begin{cases}
v_t-dv_{xx}+k(t)v_r-f_u(t,\Psi(t,x))v=0, ~~ &t\in \mathbb R, x\in [0,\infty),\\
\mu v(t,0)= k(t), ~~& t\in\mathbb R. 
  \end{cases}
\end{equation*}
By \eqref{eq-ff} and the fact that $\Psi\in  C^{1+\alpha/2,2+\alpha}([0,T]\times[0,\infty))$, it follows that   $f_u(t,\Psi(t,x))\in C_{loc}^{\alpha/2,\alpha}([0,T]\times[0,\infty))$. Since $k\in C^{\alpha/2}([0,T])$, we can apply the standard  parabolic estimates  to imply that $v\in C_{loc}^{1+\alpha/2,2+\alpha}([0,T]\times[0,\infty))$. Hence,
\[k(t)=\mu v(t,0)\in C^{1+\alpha/2}([0,T]).\]
\fi
In this subsection, we derive upper bounds for $h(t)$ and $u(t,x)$. 

Let $\tilde{\Psi}\in C^{1+\frac\alpha 2,2+\alpha}(\mathbb R \times[-1,\infty))$  be a  smooth extension of $\Psi$ to  $\mathbb R\times[-1,\infty)$ such that 
\begin{equation*}
  \begin{cases}
  \tilde{\Psi}(t,x)= \Psi(t,x) & \text{ for } (t,x)\in\mathbb R\times[0,\infty),\\
    \tilde{\Psi}(t,\cdot)=\tilde{\Psi}(t+T,\cdot) & \text{ for }t\in[0,T].
  \end{cases}
\end{equation*}
Then 
\[\tilde{\Psi}_x(t,0)= \Psi_x(t,0)=\frac{1}{\mu} k(t).\]
We also assume that $f(t, u)$ has been smoothly extended to $\mathbb R\times (-2,\infty)$ so that $f$ remains $T$-periodic in $t$.

Define
\begin{align*}
  \begin{cases}
      N:= \displaystyle \frac{1}{2d}\left((1+d)\|\tilde{\Psi}\|_{C^{1,2}([0,T]\times[-1,0])}+\max_{(t,u)\in[0,T]\times[-1,0]}|f(t,u)|\right),\\
  \varGamma  (t,x):=-Nx^2+\tilde{\Psi}(t,x).
  \end{cases}
\end{align*}
Since   
\[ \varGamma_x(t,0)=\tilde{\Psi}_x(t,0)=\frac{1}{\mu} k(t)>0=\varGamma (t,0), \quad \Gamma_x(t,x)=-2Nx+\tilde{\Psi}_x(t,x),\]
we can choose $a\in(0,1)$ small enough  such that  
\[
\mbox{$\varGamma_x(t,x)\geq \tilde{\Psi}_x(t,x)>0$ for $(t,x)\in[0,T]\times[-a,0]$ and  }
  m_a:= -\max_{t\in[0,T]}\varGamma (t,-a)\in(0,1].
\]
Set
\begin{equation*}
  \bar{\Psi}(t,x):=\begin{cases}
 \varGamma (t,x)~~ \text{ for }-a\leq x\leq 0,\\
  \Psi(t,x) ~~\text{ for }x\geq 0.
  \end{cases}
\end{equation*}
Clearly,  $\bar{\Psi}\in C^{1}(\mathbb R\times[-a,\infty))$ is strictly increasing in $x$. Denote
\begin{equation}\label{eq-k+-}
   k_{\rm min}:=\min_{t\in[0,T]}k(t),  \ k_{\rm max}:=\max_{t\in[0,T]}k(t).
\end{equation}
For $(t,x)\in[0,T]\times[-a,0]$, a direct computation yields 
\begin{equation}\label{eq-upes}
  \begin{cases}
    \frac{1}{B_0}<\tilde{\Psi}_x(t,x) \leq  \bar{\Psi}_x(t,x)\equiv\varGamma _x(t,x)=-2Nx+\tilde{\Psi}_x(t,x)\leq2N a+B^*; \\
|\bar{\Psi}_t(t,x)|\equiv|\varGamma_t(t,x)|=|\tilde{\Psi}_t(t,x)|=|\tilde{\Psi}_t(t,x)-\tilde{\Psi}_t(t,0)|\leq B^*|x|^\alpha,
  \end{cases}
\end{equation}
where
\begin{equation*}
  B_0:=\frac{1 }{\min_{(t,x)\in[0,T]\times[-a,0]}\tilde{\Psi}_x(t,x)}, \quad B^*:=\|\tilde{\Psi}\|_{C^{1+\alpha/2,2+\alpha}([0,T]\times[-a,0])}.
\end{equation*}
Denote 
\begin{equation}\label{eq-sigma0}
\sigma_0:=-\frac{\overline{f_u(t,1)}}{2}=-\frac{1}{2T}\int_0^Tf_u(t,1)dt>0.
\end{equation}
Since $f\in C^1$ in $u$ uniformly with respect to $t$, there exists $\delta\in(0,m_a)$ such that 
\begin{equation}\label{eq-f_usig}
  |f_u(t,u)-f_u(t,1)|\leq \sigma_0 \text{ for }t\in[0,T], u\in[1-\delta,1+\delta].
\end{equation}
Let 
\begin{equation}\label{eq-c_q}
  C_q:=\sup_{s\in[0,T]}e^{2\sigma_0s+\int_{0}^{s}f_u(t,1)dt}
\end{equation}
and $q_0$ be a small constant such that 
\begin{equation}\label{eq-q_0}
0<q_0<\min\{C_q^{-1}\delta,\delta\}.
\end{equation}
 Since $\Psi(t,\infty)=1$ uniformly in $t\in\mathbb{R}$, there exists  $M_0>0$  such that 
\begin{equation}\label{eq-1q_0}
    \Psi(t,x)\geq 1-\frac{q_0}{2}>  1-\delta ~\text{ for } t\in\mathbb{R}, ~x\geq M_0.
\end{equation}
Moreover, by Lemma \ref{lem-uhbd}, we can find $\tau_0>0$  such that
\begin{equation}\label{eq-uupp}
  u(t,x)\leq 1+\frac{q_0}{2}~ \text{ for } t\geq \tau_0,\ x\in[g(t), h(t)].
\end{equation}

Define
\begin{equation}\label{eq-ups}
  \begin{cases}
    \bar{h}(t):=\int_{\tau_0}^{t}k(s)ds+\gamma(1-e^{-\alpha\sigma_0(t-\tau_0)})+h(\tau_0)+M_0+a &\text{ for } t\geq\tau_0,\\
     q(t):=q_0e^{\xi(t) } \mbox{ with } \xi(t):={\sigma_0(t-\tau_0)+\int_{\tau_0}^tf_u(s,1)ds} &\text{ for } t\geq \tau_0,\\
    \bar{u}(t,x):=\bar{\Psi}(t, \bar{h}(t)-x+x_0(t))+q(t) &\text{ for }t\geq \tau_0, x\in[g(t),\bar{h}(t)],
  \end{cases}
\end{equation}
where   $\gamma>0$ is a  constant to be specified later and $x_0(t)$ is determined by 
\begin{equation}\label{eq-x_0}
  \bar{\Psi}(t,x_0(t))+q(t)=0.
\end{equation}

Since $\sigma_0+\xi'(t)=2\sigma_0+f_u(t,1)$ is $T$-periodic and by the definition of $\sigma_0$ we have $\overline{\sigma_0+\xi'}=0$, we see that $\sigma_0(t-\tau)+\xi(t)$ is periodic in $t$. Therefore we obtain from  \eqref{eq-c_q} and \eqref{eq-q_0} that for $t\geq \tau_0$,
\begin{equation}\label{eq-qb}
\begin{aligned}
      q(t)= &\ q_0 e^{\xi(t)}
      \leq q_0 e^{-\sigma_0(t-\tau_0)}\sup_{t\in[\tau_0,\tau_0+T]}e^{\sigma_0(t-\tau_0)+\xi(t)}\\
      = &\  q_0 e^{-\sigma_0(t-\tau_0)}\sup_{s\in[0,T]}e^{\sigma_0s+\xi(s)}\leq q_0C_q e^{-\sigma_0(t-\tau_0)}\\
      \leq &\  \delta e^{-\sigma_0(t-\tau_0)}.     
\end{aligned}
\end{equation}

By the choice of $\delta$, we see that $x_0(t)\in[-a,0]$ is well-defined for $t\geq \tau_0$. Moreover, $x_0(t)\to 0$ as $t\to\infty$ and
\begin{equation}\label{eq-x0q}
 -q(t)=\bar{\Psi}(t,x_0(t))-\bar{\Psi}(t,0)=\bar{\Psi}_x(t,\theta(t))x_0(t)
\end{equation}
for some  $\theta(t)\in[x_0(t),0]$. It  follows that  
\begin{equation}\label{eq-zdt}
  |x_0(t)|\leq B_0\delta e^{-\sigma_0(t-\tau_0)}\text{ for } t\geq \tau_0.
\end{equation}
Differentiating \eqref{eq-x_0} yields
\begin{equation*}
  \bar{\Psi}_x(t,x_0(t))x_0'(t)+\bar{\Psi}_t(t,x_0(t))=-q'(t)=-(\sigma_0+f_u(t,1))q(t).
  \end{equation*}
This, combined with \eqref{eq-upes} and  $x_0(t)\in[-a,0]$, leads to
\begin{eqnarray*}
 \frac{1}{B_0}|x_0'(t)|\leq \tilde{\Psi}_x(t,x)|x_0'(t)|\leq|\bar{\Psi}_x(t,x_0(t)) x_0'(t)|\leq B^*|x_0(t)|^\alpha+ \big(\sigma_0+\|f_u(\cdot,1)\|_{L^\infty([0,T])}\big)q(t).
\end{eqnarray*}
Therefore, by  \eqref{eq-zdt} and \eqref{eq-qb} we obtain
\begin{equation}\label{eq-zd'}\begin{aligned}
  |x_0'(t)|&\leq  B_0\left[B^*B_0^\alpha\delta^\alpha e^{-\alpha\sigma_0(t-\tau_0)}+ \big(\sigma_0+\|f_u(\cdot,1)\|_{L^\infty([0,T])}\big)\delta e^{-\sigma_0(t-\tau_0)}\right]\\
  &\leq   B_1\delta^\alpha(1+\sigma_0)e^{-\alpha\sigma_0(t-\tau_0)}
\end{aligned}
\end{equation}
with  
\begin{equation*}
  B_1:= B_0(B^*B_0^{\alpha}+ \|f_u(\cdot,1)\|_{L^\infty([0,T])}+1).
\end{equation*}

\begin{lemma}\label{lemma-upp}
There exists $\gamma>0$ such that the pair $(\bar{u},\bar{h})$ satisfies 
\begin{equation*}
  \begin{cases}
    h(t)\leq \bar{h}(t) &\text{ for }t\geq \tau_0,\\
u(t,x)\leq \bar{u}(t,x) &\text{ for }t\geq\tau_0, ~x\in[g(t),h(t)].
  \end{cases}
\end{equation*}
\end{lemma}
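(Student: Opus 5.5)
Apply the comparison principle Lemma \ref{lem-comp} on $[\tau_0,\infty)$ with time shifted by $\tau_0$, taking $\bar g(t):=g(t)$ as the left boundary. Since $u(t,g(t))=0\le \bar u(t,g(t))$ (because $\bar u\ge q>0$ there, as $\bar h(t)-g(t)+x_0(t)\ge0$ and $\bar\Psi\ge0$ on $[0,\infty)$), the left boundary condition is automatic. It remains to verify three things: the initial ordering at $t=\tau_0$, the free boundary inequality at $x=\bar h(t)$, and the differential inequality $\bar u_t-d\bar u_{xx}\ge f(t,\bar u)$ in $g(t)<x<\bar h(t)$. The conclusion then holds for every finite $\tau$, hence for all $t\ge\tau_0$.

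\emph{Initial data.} At $t=\tau_0$ we have $\bar h(\tau_0)=h(\tau_0)+M_0+a>h(\tau_0)$. For $x\le h(\tau_0)$ the argument $\bar h(\tau_0)-x+x_0(\tau_0)$ is at least $M_0+a+x_0\ge M_0$, because $x_0\in[-a,0]$. By \eqref{eq-1q_0} this gives $\bar u(\tau_0,x)\ge 1-q_0/2+q_0=1+q_0/2\ge u(\tau_0,x)$, using \eqref{eq-uupp}.

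\emph{Free boundary.} By \eqref{eq-x_0}, $\bar u(t,\bar h(t))=\bar\Psi(t,x_0(t))+q(t)=0$. We need $\bar h'(t)\ge \mu\bar\Psi_x(t,x_0(t))$. The left side equals $k(t)+\gamma\alpha\sigma_0e^{-\alpha\sigma_0(t-\tau_0)}$. On the right, $\mu\bar\Psi_x(t,x_0)=\mu\tilde\Psi_x(t,x_0)-2N\mu x_0\le k(t)+\mu(B^*+2N)|x_0|$, since $\tilde\Psi_x(t,0)=k/\mu$ and $\tilde\Psi\in C^{2+\alpha}$. By \eqref{eq-zdt}, $|x_0|\le B_0\delta e^{-\sigma_0(t-\tau_0)}$. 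So any $\gamma\ge \mu(B^*+2N)B_0\delta/(\alpha\sigma_0)$ suffices.

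\emph{Interior inequality.} This is the main step. Write $z=\bar h(t)-x+x_0(t)$. Then
\[
\bar u_t-d\bar u_{xx}-f(t,\bar u)=\bar\Psi_t+(\bar h'+x_0')\bar\Psi_x-d\bar\Psi_{zz}+q'-f(t,\bar\Psi+q).
\]
I will split into three regions of $z$.

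(i) $z\in[-a,0]$, i.e.\ $\bar\Psi=\varGamma$. We have $\bar u\in[0,q]$ and $q\le\delta<m_a$. The term $-d\varGamma_{zz}=2dN-d\tilde\Psi_{zz}$ dominates $|\tilde\Psi_t|+|k\tilde\Psi_z|+|f|$ by the choice of $N$. It remains to absorb the bounded negative terms $x_0'\bar\Psi_x$ and $q'$. These are $O(\delta^\alpha)$ by \eqref{eq-zd'} and \eqref{eq-qb}, so taking $\delta$ small (or enlarging $N$ slightly) controls them. I would also use that $\gamma\alpha\sigma_0 e^{-\alpha\sigma_0(t-\tau_0)}\bar\Psi_x\ge0$ helps.

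(ii) $z\in[0,M_0]$. Here $\bar\Psi=\Psi$ and the semi-wave equation gives
\[
\mathrm{LHS}=f(t,\Psi)-f(t,\Psi+q)+q'+(\gamma\alpha\sigma_0e^{-\alpha\sigma_0(t-\tau_0)}+x_0')\Psi_z.
\]
Since $\Psi_z\ge c_{M_0}>0$ on this compact range, and $|f(t,\Psi)-f(t,\Psi+q)|+|q'|\le Cq\le C\delta e^{-\sigma_0(t-\tau_0)}$, choosing $\gamma$ large makes $\gamma\alpha\sigma_0c_{M_0}e^{-\alpha\sigma_0(t-\tau_0)}$ dominate. This uses $\alpha<1$ and $|x_0'|\le B_1\delta^\alpha(1+\sigma_0)e^{-\alpha\sigma_0(t-\tau_0)}$, which is handled by $\delta$ small relative to the $\gamma$ gain.

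(iii) $z\ge M_0$. Here $\Psi\ge1-\delta$ and $\Psi+q\le1+\delta$. The $\Psi_z$-term has the right sign, apart from $x_0'\Psi_z$. The key term is
\[
q'-[f(t,\Psi+q)-f(t,\Psi)]\ge(\sigma_0+f_u(t,1))q-(f_u(t,1)+\sigma_0)q=0
\]
by \eqref{eq-f_usig}. This leaves no room, so I must absorb $x_0'\Psi_z$ via the $\gamma$ term: note $\gamma\alpha\sigma_0e^{-\alpha\sigma_0(t-\tau_0)}\ge|x_0'|$ once $\gamma\ge B_1\delta^\alpha(1+\sigma_0)/(\alpha\sigma_0)$, and $\Psi_z>0$.

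The main obstacle is region (i), where the bound must be uniform in $t$ and $\gamma$. There the $\gamma$ term helps, but $x_0'$ and $q'$ must be shown small against $N$. I would first try fixing $N$ with an extra margin and then $\delta$ small; this is legitimate because $\delta$ can be reduced before $q_0$, $M_0$, $\tau_0$ are chosen. Finally, take $\gamma$ as the maximum of the three requirements above.
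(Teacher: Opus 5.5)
Your proof is correct. It matches the paper's: the same use of Lemma \ref{lem-comp} with $\bar g=g$, the same initial ordering from \eqref{eq-1q_0} and \eqref{eq-uupp}, and the same free-boundary estimate. It also absorbs the bad terms by the $\gamma$-term in the same way on $\zeta\in[0,M_0]$ and $\zeta\ge M_0$.

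The only difference is region (i), $-a\le\zeta\le 0$. You call it the main obstacle and handle it by an extra margin in $N$ plus a further reduction of $\delta$. The paper treats it exactly like region (ii). Because $a$ was chosen so that $\varGamma_z\ge\tilde\Psi_x\ge 1/B_0>0$ on $[-a,0]$ (this is what $B_0$ in \eqref{eq-upes} is for), the term $\bar h'\varGamma_z$ contributes at least $\frac{1}{B_0}\alpha\gamma\sigma_0e^{-\alpha\sigma_0(t-\tau_0)}$. The bad terms decay at least as fast: $|x_0'|\varGamma_z\le(2Na+B^*)B_1\delta^\alpha(1+\sigma_0)e^{-\alpha\sigma_0(t-\tau_0)}$ and $q'\ge-\|f_u(\cdot,1)\|_{L^\infty}\,\delta e^{-\sigma_0(t-\tau_0)}$. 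So condition \eqref{eq-gq2} on $\gamma$ is enough. Then $2dN$ only has to dominate $\tilde\Psi_t$, $d\tilde\Psi_{zz}$ and $f$; the term $k\varGamma_z$ is nonnegative, so you do not need $N$ to cover $|k\tilde\Psi_z|$.

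The paper's route leaves $N,a,\delta,q_0,M_0,\tau_0$ as fixed before the lemma and tunes only $\gamma$, which is literally what the statement asserts. Your route is also sound, because $N$ and $\delta$ are chosen before everything that depends on them, as you note. But it genuinely needs that retuning: with $\delta$ constrained only by \eqref{eq-f_usig} and $\delta<m_a$, the fixed margin in $N$ need not dominate the $O(\delta^\alpha)$ terms. In exchange, region (i) becomes a time-independent check that puts no condition on $\gamma$.
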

\begin{proof} It suffices to show that $(\bar{u}(t,x),g(t),\bar{h}(t))$ is an upper solution of \eqref{free-bound} for $t\geq \tau_0$. 
  
  Clearly,  $\bar{h}(\tau_0)>h(\tau_0)$.  By \eqref{eq-1q_0}, \eqref{eq-uupp} and the monotonicity of $\Psi$,  
\begin{align*}
  \bar{u}(\tau_0,x)&\geq \Psi(\tau_0, \bar{h}(\tau_0)-h(\tau_0))+q_0\geq \Psi(\tau_0, M_0)+q_0\geq 1+\frac{q_0}{2}\geq u(\tau_0,x) \text{ for }x\in[g(\tau_0),h(\tau_0)].
\end{align*}
It is now easily seen that $\bar u(\tau_0,x)\geq \tilde u(\tau_0,x)$ for $x\in [g(\tau_0), \bar h(\tau_0)]$, where $\tilde u(\tau_0,x)$ denotes the zero extension of $u(\tau_0, x)$ from $[g(\tau_0), h(\tau_0)]$ to $\mathbb R$.
Since $g(0)<0$ and $g'(t)<0$ (by Lemma \ref{lem-uhbd}), we have $g(t)<0$ for all $t>0$.  Direct calculations give
\begin{equation}\label{eq-sum}
  \begin{aligned}
     &\bar{u}(t,\bar{h}(t))=0, ~~\bar{u}(t,g(t))\geq 0=\bar{u}(t, \bar{h}(t))=u(t,g(t)),\\
  %&\bar{u}(t,h(t))\geq   \bar{u}(t,\bar{h}(t))=0=u(t,h(t)) ~~~\text{ if }h(t)\leq \bar{h}(t),\\
  &-\mu\bar{u}_x(t,\bar{h}(t))=\mu\bar{\Psi}_z(t,x_0(t))=\mu\varGamma_z(t,x_0(t)). 
  \end{aligned}
\end{equation}
Here, $\bar{\Psi}_z$ and $\varGamma_z$ stand for the partial derivatives of $\bar{\Psi}(t,z)$ and $\varGamma(t,z)$ with respect to the variable $z$, respectively.
By \eqref{eq-zdt}, we have
\begin{eqnarray*}
   \mu\varGamma_z(t,x_0(t))&\leq& \mu \varGamma_z(t,0)+\mu  \|\varGamma\|_{C^{1,2}([0,T]\times[-a,0])}x_0(t)\\
   &\leq & k(t)+\mu B_0\delta (5N+B^*) e^{-\sigma_0(t-\tau_0)},
\end{eqnarray*}
and hence,
 \begin{eqnarray*}
  -\mu\bar{u}_x(t,\bar{h}(t))\leq k(t)+\mu B_0\delta (5N+B^*) e^{-\sigma_0(t-\tau_0)} \leq  k(t)+\alpha\gamma \sigma_0 e^{-\alpha\sigma_0(t-\tau_0)}= \bar{h}'(t)
 \end{eqnarray*}
 provided that 
 \begin{equation}\label{eq-gq1}
  \gamma\geq \frac{\mu B_0\delta(5N+B^*)}{\alpha\sigma_0}.
 \end{equation}

 We show next that 
\begin{eqnarray*}
  \mathcal{N}[\bar{u}]:=\bar{u}_t-d\bar{u}_{xx}-f(t,\bar{u})\geq 0 \text{ for }t>\tau_0,\ x\in(g(t),\bar{h}(t)).
\end{eqnarray*}
Denote $\zeta=\zeta(t,x):=\bar{h}(t)-x+x_0(t)$. We will prove the above inequality  in the regions  $-a\leq \zeta(t,x)\leq 0$ and $\zeta(t,x)> 0$ separately. For the case $-a\leq \zeta\leq 0$, we have
\begin{eqnarray*}
  &&\bar{u}(t,x)=\varGamma(t,\zeta)+ q(t)=-N\zeta^2+\tilde{\Psi}(t,\zeta)+q_0e^{\sigma_0(t-\tau_0)+\int_{\tau_0}^tf_u(s,1)ds},\\
  &&\bar{u}_t(t,x)=\varGamma_t+\varGamma_\zeta(\bar{h}'(t)+x_0'(t))+(\sigma_0+f_u(t,1))q(t),\\
  &&\bar{u}_x(t,x)=- \varGamma_\zeta=2N\zeta-\tilde{\Psi}_\zeta,~~\bar{u}_{xx}(t,x)= \varGamma_{\zeta\zeta}=-2N+\tilde{\Psi}_{\zeta\zeta}.
\end{eqnarray*}
By \eqref{eq-upes}, \eqref{eq-zd'}, \eqref{eq-qb} and the definition of $N$, 
\begin{eqnarray*}
  \mathcal{N}[\bar{u}]&=&\varGamma_t+\varGamma_\zeta(\bar{h}'(t)+x_0'(t))+(\sigma_0+f_u(t,1))q(t)+d(2N-\tilde{\Psi}_{\zeta\zeta})-f(t,\bar{u})\\
  &\geq & \tilde{\Psi}_t +\frac{1}{B_0}[k(t)+\alpha\gamma\sigma_0e^{-\alpha\sigma_0(t-\tau_0)}]-\left(2N a+B^*\right) B_1\delta^\alpha(1+\sigma_0)e^{-\alpha\sigma_0(t-\tau_0)}\\
  &&-\delta\|f_u(\cdot,1)\|_{L^\infty([0,T])} e^{-\sigma_0(t-\tau_0)}+d(2N-\tilde{\Psi}_{\zeta\zeta})-f(t,\bar{u})\\
  &\geq & \tilde{\Psi}_t +\Bigg[\frac{1}{B_0}\alpha\gamma\sigma_0-\left(2N a+B^*\right) B_1\delta^\alpha(1+\sigma_0)-\delta^\alpha\|f_u(\cdot,1)\|_{L^\infty([0,T])} \Bigg] e^{-\alpha\sigma_0(t-\tau_0)}\\
  && +d(2N-\tilde{\Psi}_{\zeta\zeta})-f(t,\bar{u})\geq0 
\end{eqnarray*}
provided that 
\begin{equation}\label{eq-gq2}
  \gamma\geq \frac{B_0\delta^\alpha}{ \alpha\sigma_0}\left[\left(2Na+B^*\right) B_1(1+\sigma_0)+ \|f_u(\cdot,1)\|_{L^\infty([0,T])} \right].
\end{equation}

 For $\zeta>0$, we have
\begin{eqnarray*}
&& \bar{u}(t,x)=\Psi(t, \zeta)+q(t)=\Psi(t, \zeta)+q_0e^{\sigma_0(t-\tau_0)+\int_{\tau_0}^tf_u(s,1)ds},\\
 &&\bar{u}_t(t,x)=\Psi_t+\Psi_\zeta(\bar{h}'(t)+x_0'(t))+(\sigma_0+f_u(t,1))q(t),\\
 &&\bar{u}_x(t,x)=- \Psi_\zeta,  ~~ \bar{u}_{xx}=\Psi_{\zeta\zeta},
\end{eqnarray*}
and  thus,
\begin{equation}\label{Nu^2}
\begin{aligned}
 \mathcal{N}[\bar{u}]&= \Psi_t+\Psi_\zeta[k(t)+\alpha\gamma\sigma_0e^{-\alpha\sigma_0(t-\tau_0)}+x_0'(t)]+(\sigma_0+f_u(t,1))q(t)-d \Psi_{\zeta\zeta}-f(t,\bar{u})\\
&=[\alpha\gamma\sigma_0e^{-\alpha\sigma_0(t-\tau_0)}+x_0'(t)]\Psi_\zeta+(\sigma_0+f_u(t,1))q(t)+f(t,\Psi)-f(t,\bar{u}).
\end{aligned}
\end{equation}
By the mean value theorem, there exists $\theta=\theta(t,x)\in(0,1)$ such that 
\begin{equation*}
  f(t,\bar{u})=f(t,\Psi)+f_u(t,\Psi+\theta q(t))q(t).
\end{equation*}
Due to  $\Psi_\zeta(t,\zeta)>0$ for $\zeta\geq 0$, we can find  positive constants  $Q_0$ and $Q_1$ such that
 \begin{equation*}
  0<Q_0\leq \Psi_\zeta(t,\zeta)\leq Q_1 ~~\text{ for }t\in\mathbb{R},\ \zeta\in[0,M_0].
 \end{equation*}
 Hence, for $\zeta\in[0,M_0]$, by \eqref{Nu^2},  \eqref{eq-zd'} and \eqref{eq-qb} we obtain
\begin{eqnarray*}
  \mathcal{N}[\bar{u}]
  &\geq& \alpha\gamma\sigma_0Q_0e^{-\alpha\sigma_0(t-\tau_0)}-B_1\delta^\alpha Q_1(1+\sigma_0)e^{-\alpha\sigma_0(t-\tau_0)}+f_u(t,1)q(t)-f_u(t,\Psi+\theta q(t))q(t)\\
&\geq& \left[\alpha\gamma\sigma_0Q_0-B_1\delta^\alpha Q_1(1+\sigma_0)-2\delta\max_{t\in[0,T],u\in[0,1+\delta]}f_u(t,u)\right]e^{-\alpha\sigma_0(t-\tau_0)}\\
&\geq& 0\ \
\end{eqnarray*}
provided that
\begin{equation}\label{eq-gq3}
  \gamma\geq \frac{\delta^\alpha \left[B_1Q_1(1+\sigma_0)+2\max_{t\in[0,T],u\in[0,1+\delta]}f_u(t,u)\right]}{\alpha\sigma_0Q_0}.
\end{equation}

It remains to prove  $\mathcal{N}[\bar{u}]\geq 0$  for   $\zeta\geq M_0$. From  \eqref{eq-1q_0} and \eqref{eq-qb} we see that 
\begin{equation*}
 1-\delta\leq  \Psi(t,\zeta)+\theta q(t)\leq 1+\delta ~~\text{ for }\zeta\geq M_0.
\end{equation*}
Hence, by using \eqref{eq-f_usig}, we deduce from  \eqref{Nu^2} that, for $\zeta\geq M_0$, 
\begin{eqnarray*}
  \mathcal{N}[\bar{u}]&\geq  &(\alpha\gamma\sigma_0-B_1\delta^\alpha(1+\sigma_0))e^{-\alpha\sigma_0(t-\tau_0)}\Psi_\zeta+(\sigma_0+f_u(t,1))q(t)-f_u(t,\Psi+\theta q(t))q(t)\\
&\geq &(\alpha\gamma\sigma_0-B_1\delta^\alpha (1+\sigma_0))\Psi_\zeta e^{-\sigma_0(t-\tau_0)}+\sigma_0q(t)+\big[f_u(t,1)-f_u(t,\Psi+\theta q(t))\big]q(t)\\
&\geq& 0\ \ 
\end{eqnarray*}
provided that 
\begin{equation}\label{eq-gq4}
  \gamma\geq \frac{B_1\delta^\alpha (1+\sigma_0)}{\alpha\sigma_0}.
\end{equation}
Therefore, once $\gamma$ is chosen sufficiently large such that \eqref{eq-gq1}, \eqref{eq-gq2}, \eqref{eq-gq3} and \eqref{eq-gq4} are satisfied, then Lemma~\ref{lem-comp} applies and we obtain
\begin{equation*}
  h(t)\leq \bar{h}(t),\ u(t,x)\leq \bar{u}(t,x) ~~\text{ for }t\geq \tau_0,~x\in[g(t),h(t)].
\end{equation*}
This completes the proof.
\end{proof}

 \subsection{Lower bound} In this subsection, we obtain the desired  lower bounds for the pair $(h(t),u(t,x))$.
 
 Let $\sigma_0$  and $C_q$ be the constants given by \eqref{eq-sigma0} and  \eqref{eq-c_q} respectively.
%\begin{equation*}
%\tilde{C}_q:=\sup_{t\in[0,T]}e^{-2\sigma_0t-\int_0^tf_u(s,1)ds}.
%\end{equation*}
Fix $q_1\in(0, \frac 12C_q^{-1})$ and define
 \begin{equation}\label{eq-low}
  \begin{cases}
  \underline{h}(t)=\int_0^{t}k(s)ds-\beta [1-e^{-\alpha(2\sigma_0-\sigma_1)t}],  &t\geq n_0T,\\
  \tilde{q}(t):=q_1e^{\sigma_1t+\int_{0}^tf_u(s,1)ds}, & t\geq n_0T,\\
  \underline{u}(t,x)=\Psi(t,\underline{h}(t)-x+x_1(t))+\Psi(t,\underline{h}(t)+x+x_1(t))-1-\tilde{q}(t), &x\in[-\underline{h}(t),\underline{h}(t)],
 \end{cases}
 \end{equation}
where $n_0\in \mathbb{N}$,  $\beta>0$ and $\sigma_1\in(\sigma_0, 2\sigma_0)$  are   constants to be specified later and the function $x_1(t)>0$ is determined by  
\begin{equation}\label{delta}
\Psi(t,x_1(t))+\Psi(t,2\underline{h}(t)+x_1(t))-1-\tilde{q}(t)=0.
 \end{equation}
 We note that since $\tilde q(t)\in (0, \frac 12)$ (due to the choice of $q_1$) and the function $x\to \Psi(t,x)+\Psi(t,2\underline{h}(t)+x)$ is strictly increasing with range covering $[1, 2)$ for $x\in [0,\infty)$, such $x_1(t)$ is well-defined.
 
 By a computation similar to \eqref{eq-qb} and using the choice of $q_1$, we obtain  
\begin{equation}\label{eq-qb2}
  \tilde{q}(t)\leq q_1C_qe^{-(2\sigma_0-\sigma_1)t} \leq \frac{1}{2}e^{-(2\sigma_0-\sigma_1)t} \text { for }t\geq 0
\end{equation}
and 
\begin{equation}\label{eq-qb3}
    \tilde{q}(t)\geq q_1\tilde{C}_qe^{-(2\sigma_0-\sigma_1)t}=:\tilde{q}_0e^{-(2\sigma_0-\sigma_1)t} \text { for }t\geq 0,
\end{equation}
where 
\begin{equation}
  \tilde{C}_q:=\min_{t\in[0,T]}e^{2\sigma_0t+\int_0^tf_u(s,1)ds}.
\end{equation}
 Since $\Psi(\cdot,\infty)=1$ and $\underline h(\infty)=\infty$, we  see that   $x_1(t)$ is bounded and $x_1(\infty)=0$. Set 
 \[ X_b:=\max_{t\in[0,\infty)}x_1(t).\]
  Then there exists a positive constant $N_2$ such that 
\begin{equation}\label{phiz-bound}
  \max\limits_{t\in[0,\infty),x\in[0,X_b]}|\Psi_{xx}(t,x)|+\max\limits_{t\in[0,\infty)}[\Psi_{t}(t,\cdot)]_{\alpha;[0,X_b]}+\max\limits_{t\in[0,\infty),x\in[0,X_b]}|\Psi_{t}(t,x)|\leq N_2.
\end{equation}
Moreover, due to $\Psi_x>0$, we can find positive constants $N_3$ and $N_4$  such that 
\begin{equation}\label{phiz-bound+}
  0<N_3\leq \Psi_{x}(t,x)\leq N_4 ~\text{ for }t\in[0,\infty),x\in[0,X_b].
\end{equation}

By  Lemma \ref{lem-1-phi}, for any given $\nu\in(0,\nu_0)$,  there exists a constant $N_1$ such that \eqref{phi-infty} holds.
Owing to
\[2\underline{h}(t)+x_1(t)\geq  k_{min}t ~\text{ for }t\geq n_0T,\]
with $n_0$ large,
we deduce from  \eqref{delta}, \eqref{eq-qb2} and  \eqref{phi-infty} that 
\begin{eqnarray*}
  \Psi(t,x_1(t))&=&1+\tilde{q}(t)-\Psi(t,2\underline{h}(t)+x_1(t))\\
  &\leq& \frac{1}{2}e^{-(2\sigma_0-\sigma_1)t}+N_1 e^{-\nu  k_{min}t}\text{ for }t\geq n_0T.
\end{eqnarray*}
The  mean value theorem then yields 
\begin{equation*}
  \Psi_x(t,\theta(t))x_1(t)=\Psi(t,x_1(t))-\Psi(t,0)\leq  \frac{1}{2}e^{-(2\sigma_0-\sigma_1)t}+N_1 e^{-\nu  k_{min} t}
\end{equation*}
for some  $\theta(t)\in[0,x_1(t)]$. Hence,  we can use  \eqref{phiz-bound+} to obtain
\begin{equation}\label{eq-xdt}
  x_1(t)\leq C_1e^{-(2\sigma_0-\sigma_1)t}+C_1 e^{-\nu k_{min}t} ~\text{ for } t\geq n_0T, 
\end{equation}
with 
\begin{equation*}
  C_1:=\max\left\{1,\frac{1 }{2N_3},\frac{N_1}{N_3}\right\}.
\end{equation*}
Since $\Psi(t,0)\equiv0$ and $\Psi\in C^{1+\alpha/2,2+\alpha}(\mathbb R\times \mathbb R^+)$, we have $\Psi_t(t,0)=0$ and  by \eqref{phiz-bound},  
\begin{eqnarray*}
  |\Psi_t(t,x_1(t))|=  |\Psi_t(t,x_1(t))-\Psi_t(t,0)|\leq \max_{t\in[0,T]}[\Psi_{t}(t,\cdot)]_{\alpha;[0,X_b]}\ x_1^\alpha(t) \leq N_2x_1^\alpha(t).
\end{eqnarray*}
  Differentiating equation \eqref{delta}  and using inequalities \eqref{phi-infty} and \eqref{phiz-bound} we obtain
\begin{eqnarray*}
  ~~&&[\Psi_x(t,x_1(t))+\Psi_x(t,2\underline{h}(t)+x_1(t))]x_1'(t)\\
  &=&-\Psi_t(t,x_1(t))-\Psi_t(t,2\underline{h}(t)+x_1(t))-2[k(t)-\alpha(2\sigma_0-\sigma_1)\beta e^{-\alpha(2\sigma_0-\sigma_1)t}]\Psi_x(t,2\underline{h}(t)+x_1(t))\\
  &&+(\sigma_1+f_u(t,1))\tilde{q}(t)\\
  &\leq&N_2x_1^\alpha(t)+N_1[1+2\alpha(2\sigma_0-\sigma_1)\beta e^{-\alpha(2\sigma_0-\sigma_1)t}]e^{-\nu  k_{min}t}+(\sigma_1+\|f_u(\cdot,1)\|_{L^\infty([0,T])})e^{-(2\sigma_0-\sigma_1)t}\\
  &\leq & (N_2C_1+\sigma_1+\|f_u(\cdot,1)\|_{L^\infty([0,T])})e^{-\alpha(2\sigma_0-\sigma_1)t}\\
  &&+[N_2C_1+N_1+2\alpha N_1(2\sigma_0-\sigma_1)\beta e^{-\alpha(2\sigma_0-\sigma_1)t}]e^{-\nu \alpha k_{min}t}\\
  &\leq &\tilde{C_1}(1+\sigma_1)e^{-\alpha(2\sigma_0-\sigma_1)t}+\tilde{C_1}[1+(2\sigma_0-\sigma_1)\beta e^{-\alpha(2\sigma_0-\sigma_1)t}]  e^{-\nu \alpha k_{min}t},
\end{eqnarray*}
where 
\begin{equation*}
  \tilde{C_1}=N_2C_1+\|f_u(\cdot,1)\|_{L^\infty([0,T])}+2N_1+1.
\end{equation*}
Combining this with \eqref{phiz-bound+}, we deduce
\begin{equation}\label{eq-xdelta'}
 x_1'(t)\leq C_2(1+\sigma_1)e^{-\alpha(2\sigma_0-\sigma_1)t}+C_2[1+(2\sigma_0-\sigma_1)\beta  e^{-\alpha(2\sigma_0-\sigma_1)t}] e^{-\nu \alpha k_{min}t} ~\text{ for }t\geq n_0T,
\end{equation}
where  $C_2:=\frac{\tilde{C_1}}{N_3}>0$.
\begin{lemma}\label{lemma-lower}
There exist constants $\beta$, $\sigma_1$ in $\mathbb R^+$ and  $n_0, n_2$ in $\mathbb{N}$ such that 
  \begin{equation*}
    \begin{cases}
  [-\underline{h}(t),\underline{h}(t)]\subset  [g(t+n_2T),h(t+n_2T)] &\text{ for }t\geq  n_0T,\\
   \underline{u}(t,x)\leq u(t+n_2T,x) &\text{ for }t\geq n_0T, ~x\in[-\underline{h}(t), \underline{h}(t)].
    \end{cases}
  \end{equation*}
  \end{lemma}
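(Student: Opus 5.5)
The plan is to show that $(\underline u,-\underline h,\underline h)$, shifted in time by $n_2T$, is a lower solution of \eqref{free-bound} on $t\ge n_0T$. We then apply the lower-solution analogue of Lemma \ref{lem-comp2}, obtained by reversing the inequalities. By construction $\underline u(t,\pm\underline h(t))=0$, because \eqref{delta} is exactly the boundary condition and $\underline u$ is even in $x$. So four things must be checked: the differential inequality $\mathcal N[\underline u]:=\underline u_t-d\underline u_{xx}-f(t,\underline u)\le 0$ on $(-\underline h,\underline h)$; the free boundary inequality $\underline h'(t)\le -\mu\underline u_x(t,\underline h(t))$ together with its mirror image at $-\underline h$; the ordering at the initial time $n_0T$; and the boundary ordering for the shifted problem.

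\emph{Free boundary condition.} We have $-\mu\underline u_x(t,\underline h)=\mu\Psi_x(t,x_1)-\mu\Psi_x(t,2\underline h+x_1)$. Using $\mu\Psi_x(t,0)=k(t)$, \eqref{phiz-bound}, \eqref{eq-xdt} and \eqref{phi-infty}, this is at least
\[k(t)-C\big(e^{-(2\sigma_0-\sigma_1)t}+e^{-\nu k_{min}t}\big).\]
On the other hand $\underline h'(t)=k(t)-\alpha(2\sigma_0-\sigma_1)\beta e^{-\alpha(2\sigma_0-\sigma_1)t}$. We therefore choose $\sigma_1\in(\sigma_0,2\sigma_0)$ close enough to $2\sigma_0$ that $\alpha(2\sigma_0-\sigma_1)<\nu k_{min}$. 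Then the $\beta$-term dominates once $\beta$ is large (or once $n_0$ is large).

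\emph{Differential inequality.} Write $\zeta_\pm=\underline h\mp x+x_1$, so that $\underline u=\Psi(t,\zeta_+)+\Psi(t,\zeta_-)-1-\tilde q$. Using the equation for $\Psi$, we get
\[\mathcal N[\underline u]=(\underline h'-k+x_1')(\Psi_x(\zeta_+)+\Psi_x(\zeta_-))-\tilde q'+f(t,\Psi(\zeta_+))+f(t,\Psi(\zeta_-))-f(t,\underline u).\]
The first term is at most $\big(-\alpha(2\sigma_0-\sigma_1)\beta e^{-\alpha(2\sigma_0-\sigma_1)t}+x_1'\big)(\Psi_x(\zeta_+)+\Psi_x(\zeta_-))$, and \eqref{eq-xdelta'} shows that $x_1'$ is absorbed once $\beta$ is large. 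By symmetry we take $x\ge0$, so that $\zeta_-\ge\zeta_+$ and $\zeta_-\ge\underline h(t)+x_1$ is large. We then split into two regions.

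If $\zeta_+\in[0,M_0]$, then $\Psi_x(\zeta_+)\ge Q_0>0$. Here $f(\Psi(\zeta_+))-f(\underline u)$ and $f(\Psi(\zeta_-))$ are $O(1-\Psi(\zeta_-)+\tilde q)=O(e^{-\nu k_{min}t}+e^{-(2\sigma_0-\sigma_1)t})$, and $\tilde q'=O(\tilde q)$. All of this is dominated by $\beta e^{-\alpha(2\sigma_0-\sigma_1)t}Q_0$.

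If $\zeta_+\ge M_0$, both $\Psi$ values and $\underline u$ lie in $[1-\delta,1+\delta]$ (for $n_0$ large). Writing $f(\Psi_+)+f(\Psi_-)-f(\underline u)$ via the mean value theorem around $1$, and using $f(t,1)=0$ together with \eqref{eq-f_usig}, the nonlinear terms reduce to
\[f_u(t,1)\tilde q+O(\sigma_0)\big(\tilde q+(1-\Psi_+)+(1-\Psi_-)\big)+O\big((1-\Psi_+)(1-\Psi_-)\big).\]
Since $\tilde q'=(\sigma_1+f_u(t,1))\tilde q$, the contribution of $\tilde q$ is at most $-(\sigma_1-\sigma_0)\tilde q<0$, which uses $\sigma_1>\sigma_0$. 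The cross term is controlled by $(1-\Psi_-)\le N_1e^{-\nu k_{min}t}$, which is small relative to $\tilde q\ge\tilde q_0e^{-(2\sigma_0-\sigma_1)t}$ provided $2\sigma_0-\sigma_1<\nu k_{min}$. The terms linear in $1-\Psi_\pm$ are still a difficulty: they have to be absorbed by the $\Psi_x$ term. This is delicate because $\Psi_x$ and $1-\Psi$ decay at comparable exponential rates, so I would rather handle them through the exact quantity $f(\Psi_+)+f(\Psi_-)-f(\Psi_++\Psi_--1)$. For $\Psi_\pm$ near $1$ this equals $(1-\Psi_+)(1-\Psi_-)\cdot O(1)$ by a second-order Taylor/mean-value argument, using only that $f_u$ is uniformly continuous: it is an integral of $f_u$ differences times $(1-\Psi_-)$. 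That leaves only the $\tilde q$ piece, handled as above.

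I expect this region to be the main obstacle, together with tuning the order of choices: $\sigma_1$ first, then $\beta$, then $n_0$.

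\emph{Initial ordering.} Since spreading happens, $u(t,\cdot)\to$ the positive periodic solution $\hat q(t)$ (here $\hat q\equiv1$, since $f(t,1)=0$) locally uniformly, and $g\to-\infty$, $h\to\infty$. At time $n_0T$ the set $[-\underline h(n_0T),\underline h(n_0T)]$ is a fixed compact interval, and there $\underline u\le 1-\tilde q(n_0T)<1$. Choosing $n_2$ large, we get $[g,h]((n_0+n_2)T)\supset[-\underline h(n_0T),\underline h(n_0T)]$ and $u((n_0+n_2)T,x)>\underline u(n_0T,x)$ there. By periodicity of $f$, the time-shifted function $u(t+n_2T,x)$ solves the same problem, so the comparison principle yields the lemma.
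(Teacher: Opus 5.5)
Your proposal is correct and follows essentially the paper's proof: you verify that $(\underline u,-\underline h,\underline h)$ is a lower solution, treat the region near $\pm\underline h$ by letting the $\beta$-term (with $\Psi_x$ bounded below there) dominate, and treat the interior by making the $\tilde q$-contribution equal to $-(\sigma_1-\sigma_0)\tilde q$ while the remaining nonlinear term is $O(\min\{1-\Psi_+,1-\Psi_-\})$, which is exponentially smaller once $2\sigma_0-\sigma_1<\nu k_{min}$; you then start the comparison at $n_0T$ using spreading. One small correction: under \eqref{smooth}, where $f$ is only $C^1$ in $u$, the exact difference $f(t,\Psi_+)+f(t,\Psi_-)-f(t,\Psi_++\Psi_--1)$ is only $O(1-\Psi_-)$, not $O\big((1-\Psi_+)(1-\Psi_-)\big)$; since that weaker bound is all you actually use (and all the paper uses), your argument stands.
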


  \begin{proof} 
    It is sufficient to verify that $(\underline{u},-\underline{h},\underline{h})$ constitutes a lower solution of \eqref{free-bound}. 
    By \eqref{phiz-bound} and \eqref{eq-xdt}, we have
\begin{eqnarray*}
  \mu \Psi_x(t,x_1(t))=   \mu \Psi_x(t,0)+  \mu\Psi_{xx}(t,\theta(t))x_1(t)\geq k(t)-\mu N_2C_1(e^{-(2\sigma_0-\sigma_1)t}+ e^{-\nu k_{min}t})
\end{eqnarray*}
for some $\theta(t)\in[0,x_1(t)]$.
Hence,
\begin{align*}
  \mp\mu \underline{u}_x(t,\pm \underline{h}(t))&=\mu \left[\Psi_x(t,x_1(t))-\Psi_x(t,2\underline{h}(t)+x_1(t))\right]\\
  &\geq k(t)-\mu N_2C_1 (e^{-(2\sigma_0-\sigma_1)t}+ e^{-\nu k_{min}t})-\mu N_1 e^{-\nu  k_{min}t}\\
  &\geq k(t)- \mu N_2C_1e^{-\alpha(2\sigma_0-\sigma_1)t}-\mu (N_2C_1+N_1) e^{-\alpha\nu  k_{min}t}.
\end{align*}
It follows that 
\begin{eqnarray*}
  \underline{h}'(t)=k(t)-\alpha(2\sigma_0-\sigma_1)\beta e^{-\alpha(2\sigma_0-\sigma_1)t}\leq   \mp\mu \underline{u}_x(t,\pm\underline{h}(t)) ~\text{ for }t\geq n_0T,
\end{eqnarray*}
provided that $2\sigma_0-\sigma_1<\nu  k_{min}$ and 
\begin{equation*}
  \beta\geq \frac{\mu(2N_2C_1+N_1)}{\alpha(2\sigma_0-\sigma_1)}.
\end{equation*} 
In view of
 \begin{equation*}
 \Psi(t,\infty)=1 \text{ and }\Psi_x(t,\infty)=0 \text{  uniformly in }t\in\mathbb{R},
 \end{equation*}
 for any given small constant $\epsilon_0>0$,  we can find a sufficiently large constant $\tilde{M}_0=\tilde{M}_0(\epsilon_0)>0$  such that 
\begin{equation}\label{phi-epsi}
  \Psi(t,x)> 1-\epsilon_0, \ \Psi_x(t,x)<\epsilon_0/2~\text{ for }t\in\mathbb{R},\ x\geq  \tilde{M}_0.
\end{equation}

In the following, we show that 
\begin{equation*}
  \mathcal{N}[\underline{u}]=\underline{u}_t-d\underline{u}_{xx}-f(t,\underline{u})\leq 0.
\end{equation*}
For convenience,  we denote $\zeta^-=\zeta^-(t,x):=\underline{h}(t)-x+x_1(t)$ and   $\zeta^+=\zeta^+(t,x):=\underline{h}(t)+x+x_1(t)$. By direct computations,  
\begin{align*}
  \underline{u}_t=& \Psi_t(t,\zeta^-)+\Psi_t(t,\zeta^+)+[\Psi_z(t,\zeta^-)+\Psi_z(t,\zeta^+)](\underline{h}'(t)+x_1'(t))-\tilde{q}'(t),\\
 \underline{u}_{x}=& \Psi_{z}(t,\zeta^+)-\Psi_{z}(t,\zeta^-),\quad \underline{u}_{xx}= \Psi_{zz}(t,\zeta^-)+\Psi_{zz}(t,\zeta^+).
\end{align*}
Here $\Psi_z$ stands for the partial derivative of $\Psi(t,z) $ with respect to $z$.
Thus we can use \eqref{eqn-wave} to obtain
\begin{equation}
\begin{aligned}\label{eq-Nu_}
  \mathcal{N}[\underline{u}]=& [\Psi_z(t,\zeta^-)+\Psi_z(t,\zeta^+)][x_1'(t)-\alpha(2\sigma_0-\sigma_1)\beta e^{-\alpha(2\sigma_0-\sigma_1)t}]-(\sigma_1+f_u(t,1))\tilde{q}(t)\\
  &+f(t,\Psi(t,\zeta^-))+f(t,\Psi(t,\zeta^+))-f\left(t,\underline{u}\right).
\end{aligned}
\end{equation}
We next show that $\mathcal{N}[\underline{u}]\leq 0$ holds in regions $\underline{h}(t)-\tilde{M}_0\leq x\leq \underline{h}(t)$, $-\underline{h}(t)\leq x\leq -\underline{h}(t)+\tilde{M}_0$,  and $ -\underline{h}(t)+\tilde{M}_0\leq x\leq \underline{h}(t)-\tilde{M}_0$  separately.

\underline{Case (i)}: $\underline{h}(t)-\tilde{M}_0\leq x\leq \underline{h}(t)$. In this case, by increasing the value of $n_0$ if necessary, we have $x_1(t)\leq \zeta^-\leq x_1(t)+\tilde{M}_0$  and
\begin{equation*}
  \zeta^+=\underline{h}(t)+x+x_1(t)\geq 2\underline{h}(t)-\tilde{M}_0+x_1(t)\geq \underline{h}(t)\geq k_{min}t \mbox{ for } t\geq n_0T.
\end{equation*}
Since  $x_1(t)>0$ is  bounded,  there exist positive constants $\tilde{N}_3$ and $\tilde{N}_4$ so that 
\begin{equation*}
 0< \tilde{N}_3\leq\Psi_z(t,\zeta^+)+\Psi_z(t,\zeta^-)\leq \tilde{N}_4.
\end{equation*}
By \eqref{smooth} and  the fact that $f(\cdot,1)=0$, 
there exists $Q>0$
 such that \begin{align*}\begin{cases}
  f(t,\Psi(t,\zeta^+))= f(t,\Psi(t,\zeta^+))-f(t,1)\leq Q|\Psi(t,\zeta^+)-1|\leq QN_1e^{-\nu\zeta^+} \leq QN_1 e^{-\nu k_{min}t},\\
  f(t,\Psi(t,\zeta^-))-f(t,\underline{u})\leq Q|1-\Psi(t,\zeta^+)+\tilde{q}(t)|\leq Q[N_1 e^{-\nu  k_{min}t}+\tilde{q}(t)].  
 \end{cases}
\end{align*}
 Hence, we can use  \eqref{eq-Nu_}, \eqref{eq-xdelta'} and \eqref{eq-qb2} to obtain
\begin{eqnarray*}
  \mathcal{N}[\underline{u}]&\leq&\tilde{N}_4C_2(1+\sigma_1)e^{-\alpha(2\sigma_0-\sigma_1)t}+\tilde{N}_4C_2 [1+(2\sigma_0-\sigma_1)\beta  e^{-\alpha(2\sigma_0-\sigma_1)t}]e^{-\nu \alpha k_{min}t}\\
  &&-\tilde{N}_3\alpha(2\sigma_0-\sigma_1)\beta e^{-\alpha(2\sigma_0-\sigma_1)t}-(\sigma_1+f_u(t,1))\tilde{q}(t) +Q(2N_1 e^{-\nu  k_{min}t}+\tilde{q}(t))\\
  &\leq &\left[\tilde{N}_4C_2(1+\sigma_1)-\tilde{N}_3\alpha(2\sigma_0-\sigma_1)\beta   -\min_{t\in[0,T]}f_u(t,1) +Q\right] e^{-\alpha(2\sigma_0-\sigma_1)t}\\
  &&+\left\{\tilde{N}_4C_2\left[1+(2\sigma_0-\sigma_1)\beta  e^{-\alpha(2\sigma_0-\sigma_1)t} \right]+2QN_1\right\} e^{-\nu \alpha k_{min}t}\\
  &=& [\tilde{N}_4C_2(1+\sigma_1)-\tilde{N}_3\alpha(2\sigma_0-\sigma_1)\beta  /2-\min_{t\in[0,T]}f_u(t,1) +Q] e^{-\alpha(2\sigma_0-\sigma_1)t}\\
  &&+(\tilde{N}_4C_2+2QN_1) e^{-\nu \alpha  k_{min}t}+(2\sigma_0-\sigma_1)\beta(\tilde{N}_4C_2 e^{-\nu\alpha  k_{min}t}-\tilde{N}_3\alpha  /2)e^{-\alpha(2\sigma_0-\sigma_1)t}\\
  &\leq & 0  \ \ \text{ for } t\geq  n_0T,
\end{eqnarray*}
  provided that $2\sigma_0-\sigma_1<\nu k_{min}$, $ n_0\in\mathbb{N}$ is large enough such that 
\begin{equation*}
  \tilde{N}_3\geq 2\tilde{N}_4C_2 e^{-\nu\alpha k_{min} n_0T}/\alpha,
\end{equation*}
and 
\begin{equation*}
  \beta\geq \frac{2[\tilde{N}_4C_2(1+\sigma_1) -\min_{t\in[0,T]}f_u(t,1)  +Q+\tilde{N}_4C_2+2QN_1]}{\tilde{N}_3\alpha(2\sigma_0-\sigma_1)}
\end{equation*} 
 
\underline{Case (ii)}: $-\underline{h}(t)\leq x\leq -\underline{h}(t)+\tilde{M}_0$. This is symmetric to Case (i), and by parallel argument we have  $\mathcal{N}[\underline{u}]\leq 0$ in this case.
 
\underline{Case (iii)}: $ -\underline{h}(t)+\tilde{M}_0\leq x\leq \underline{h}(t)-\tilde{M}_0$. In this case, we have
\begin{eqnarray*}
\zeta^\pm\geq  \tilde{M}_0\  \text{ and } \ \max\{\zeta^+,\zeta^-\}\geq \underline{h}(t)+x_1(t)\geq  k_{min}t-\beta.
\end{eqnarray*} 
It follows from  \eqref{phi-epsi} and \eqref{phi-infty} that  
\begin{equation}\label{eq-mami}
  \begin{cases}
    \max\{1-\Psi(t,\zeta^+), 1-\Psi(t,\zeta^-)\}\leq \epsilon_0,\\
  0<\Psi_z(t,\zeta^+)+\Psi_z(t,\zeta^-)<\epsilon_0,\\
    \min\{1-\Psi(t,\zeta^+),1-\Psi(t,\zeta^-)\}\leq N_1e^{-\nu (k_{min}t-\beta)}.
  \end{cases}
\end{equation}
Define
\begin{equation*}
  F(\Psi(t,\zeta^+),\Psi(t,\zeta^-)):=f(t,\Psi(t,\zeta^+))+f(t,\Psi(t,\zeta^-))-f\left(t, \underline{u}\right).
\end{equation*}
By the regularity of $f$ on $u$ and the fact that $f(t,1)\equiv 0$, there exists $Q>0$ such that
\begin{align*}
 F=F(t,x):=&\ f(t,\Psi(t,\zeta^+))+f(t,\Psi(t,\zeta^-))-f\left(t, \underline{u}\right)-f(t,1)\\
 \leq & \min\Big\{Q[1-\Psi(t,\zeta^+)]+f_u(t,\theta^-)[1-\Psi(t,\zeta^+)+\tilde{q}(t)],\\
 &\ \ \ \ \ \ \ \ \ \ \ \ \ Q[1-\Psi(t,\zeta^-)]+f_u(t,\theta^+)[1-\Psi(t,\zeta^-)+\tilde{q}(t)]\Big\}\\
 \leq &\min\big\{2Q[1-\Psi(t,\zeta^+)]+f_u(t,\theta^-)\tilde{q}(t), 2Q[1-\Psi(t,\zeta^-)]+f_u(t,\theta^+)\tilde{q}(t)\big\},
\end{align*}
where 
\[
\begin{cases}\theta^-=\theta^-(t,x)\in[\Psi(t,\zeta^+)+\Psi(t,\zeta^-)-1-\tilde{q}(t),\Psi(t,\zeta^-)],\\
\theta^+=\theta^+(t,x)\in[\Psi(t,\zeta^-)+\Psi(t,\zeta^+)-1-\tilde{q}(t),\Psi(t,\zeta^+)].
\end{cases}
\]
By  \eqref{phi-epsi} and \eqref{eq-qb2}, we can choose  $n_0$ and $\tilde{M}_0$  large enough and then $\epsilon_0>0$  small enough such that for $t\geq n_0T$,
 \[|1-\theta^\pm(t,x)|\leq \delta \text{ with  } \delta \text{ given by \eqref{eq-f_usig}}.\]
It then follows from  \eqref{eq-f_usig} and  \eqref{eq-mami}  that 
\begin{align*}
 F\leq  2QN_1e^{-\nu (k_{min}t-\beta)}+(f_u(t,1)+\sigma_0)\tilde{q}(t) \text{ for }t\geq n_0T.
\end{align*}
Combining this with \eqref{eq-Nu_}, \eqref{eq-xdelta'} and \eqref{eq-qb3}, we obtain
\begin{eqnarray*}
\mathcal{N}[\underline{u}]&\leq & [\Psi_z(t,\zeta^+)+\Psi_z(t,\zeta^-)][x_1'(t)-\alpha(2\sigma_0-\sigma_1)\beta e^{-\alpha(2\sigma_0-\sigma_1)t}]\\
&&-(\sigma_1+f_u(t,1))\tilde{q}(t)+F(t,x)\\
&\leq &[\Psi_z(t,\zeta^+)+\Psi_z(t,\zeta^-)][ C_2(1+\sigma_1)-\alpha(2\sigma_0-\sigma_1)\beta]e^{-\alpha(2\sigma_0-\sigma_1)t}\\
&& +C_2\epsilon_0[1+(2\sigma_0-\sigma_1)\beta  e^{-\alpha(2\sigma_0-\sigma_1)t}]e^{-\nu \alpha k_{min}t}-(\sigma_1+f_u(t,1))\tilde{q}(t)+F(t,x)\\
&\leq &[\Psi_z(t,\zeta^+)+\Psi_z(t,\zeta^-)][ C_2(1+\sigma_1)-\alpha(2\sigma_0-\sigma_1)\beta]e^{-\alpha(2\sigma_0-\sigma_1)t}\\
&& +C_2\epsilon_0[1+(2\sigma_0-\sigma_1)\beta  e^{-\alpha(2\sigma_0-\sigma_1)t}]e^{-\nu \alpha k_{min}t}-(\sigma_1-\sigma_0)\tilde{q}(t)+2QN_1e^{-\nu (k_{min}t-\beta)}\\
&\leq &[\Psi_z(t,\zeta^+)+\Psi_z(t,\zeta^-)][ C_2(1+\sigma_1)-\alpha(2\sigma_0-\sigma_1)\beta]e^{-\alpha(2\sigma_0-\sigma_1)t}\\
&& +[C_2\epsilon_0+C_2\epsilon_0(2\sigma_0-\sigma_1)\beta  e^{-\alpha(2\sigma_0-\sigma_1)t}+2QN_1e^{\nu\beta} ]e^{-\nu \alpha k_{min}t}-(\sigma_1-\sigma_0)\tilde{q}_0e^{- (2\sigma_0-\sigma_1)t}\\
&\leq & 0   \ \text{ for }t\geq  n_0T,
\end{eqnarray*}
provided  that    $n_0$ is large enough and
\begin{equation*}
\beta\geq \frac{C_2(1+\sigma_1)}{\alpha(2\sigma_0-\sigma_1)},\  2\sigma_0-\sigma_1<\nu\alpha k_{min}.
\end{equation*} 
We have thus proved  $\mathcal{N}[\underline{u}]\leq 0$ for $t\geq n_0T$ and $x\in[-\underline{h}(t),\underline{h}(t)]$.

Because $(g(t),h(t))\to\mathbb (-\infty,\infty)$ and $u(t,x)\to 1$ as $t\to\infty$ locally uniformly in $x\in\mathbb R$, there exists a positive integer $n_1\geq  n_0$ such that
\begin{equation*}
\begin{cases}
  [-\underline{h}( n_0T),\underline{h}( n_0T)]\subset [g(n_1T), h(n_1T)],\\
   u(n_1T,x)\geq 1-\tilde{q}(n_0T)\geq \underline{u}( n_0T,x) ~ \text{ for }x\in[-\underline{h}( n_0T),\underline{h}( n_0T)].
 % u(t,x)>0~ &\text{ for }t\geq0,\ x\in(g(t),h(t)).
\end{cases}
\end{equation*} 
Set $n_2:=n_1-n_0$, then
\begin{equation*}
u( n_0T+n_2T,x)\geq \underline{u}( n_0T,x) ~\text{ for }x\in[-\underline{h}( n_0T),\underline{h}( n_0T)].
\end{equation*}  
Therefore,  taking   $0<2\sigma_0-\sigma_1<\nu\alpha k_{min}$, $\epsilon_0>0$ sufficiently small, and $\beta$, $n_0$ sufficiently large,  we can apply the lower solution version of Lemma  \ref{lem-comp2} to conclude that
\begin{eqnarray*}
  [-\underline{h}(t),\underline{h}(t)]\subset  [g(t+n_2T),h(t+n_2T)], \ \underline{u}(t,x)\leq u(t+n_2T,x)~~  \text{ for }t\geq  n_0T,x\in [-\underline{h}(t),\underline{h}(t)].
\end{eqnarray*}
The proof is now complete. 
\end{proof}

\begin{proposition}\label{prop-bound}
  There exists some constant $L>0$ such that 
  \begin{equation}\label{h-bd}
    \left|h(t)-\int_0^tk(s)ds\right|\leq L ~~\text{ for } t\geq 0.
  \end{equation}
\end{proposition}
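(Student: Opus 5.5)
The bound \eqref{h-bd} will follow by combining Lemma \ref{lemma-upp} and Lemma \ref{lemma-lower}, then using continuity of $h$ on a compact initial time interval.

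\emph{Upper bound.} Lemma \ref{lemma-upp} gives $h(t)\le \bar h(t)$ for $t\ge\tau_0$. From the definition \eqref{eq-ups},
\[
\bar h(t)-\int_0^t k(s)ds=-\int_0^{\tau_0}k(s)ds+\gamma\bigl(1-e^{-\alpha\sigma_0(t-\tau_0)}\bigr)+h(\tau_0)+M_0+a\le \gamma+h(\tau_0)+M_0+a,
\]
since $k>0$. Hence $h(t)-\int_0^tk(s)ds$ is bounded above by a constant for $t\ge\tau_0$.

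\emph{Lower bound.} Lemma \ref{lemma-lower} gives $\underline h(t)\le h(t+n_2T)$ for $t\ge n_0T$. Set $s=t+n_2T$. Because $k$ is $T$-periodic, $\int_0^{t}k=\int_0^{s}k-n_2T\bar k$. Then \eqref{eq-low} yields
\[
h(s)\ge \underline h(s-n_2T)\ge \int_0^{s}k(\tau)d\tau-n_2T\bar k-\beta \quad\text{for } s\ge (n_0+n_2)T,
\]
so $h(s)-\int_0^sk\ge -n_2T\bar k-\beta$ on this range.

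\emph{Compact range.} On the remaining interval $[0,\max\{\tau_0,(n_0+n_2)T\}]$, both $h$ and $\int_0^tk$ are continuous, hence bounded; alternatively one can use $0<h'<K_0$ from Lemma \ref{lem-uhbd}. Taking $L$ to be the maximum of the three resulting constants gives \eqref{h-bd}.

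No real obstacle is expected, since the substantive work was done in the upper and lower solution constructions. The only point needing care is the time shift by $n_2T$ in Lemma \ref{lemma-lower}, which the periodicity of $k$ handles exactly.
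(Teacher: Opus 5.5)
Your proposal is correct and is essentially the paper's argument: the paper simply observes that $\bar h(t)=\int_0^tk(s)ds+O(1)$ and $\underline h(t)=\int_0^tk(s)ds+O(1)$ and invokes Lemmas \ref{lemma-upp} and \ref{lemma-lower}. You additionally make explicit two points the paper leaves implicit: the $n_2T$ time shift, which periodicity of $k$ handles exactly, and the bounded initial time interval.
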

\begin{proof} The desired estimate in \eqref{h-bd} follows easily from 
   Lemmas \ref{lemma-upp} and  \ref{lemma-lower}, since
\[
\bar h(t)=\int_0^tk(s)ds+O(1),\ \ \  \underline h(t)=\int_0^tk(s)ds+O(1).
\]
\end{proof}

\subsection{Convergence along a time sequence}

By Proposition \ref{prop-bound}, 
\begin{equation*}
  -L\leq h(t)-\int_0^tk(s)ds\leq L\text{ for } t\geq 0.
\end{equation*}
Denote
\begin{equation*}
  \begin{cases}
    K(t):=\int_0^tk(s)ds-2L, ~G(t):=g(t)-K(t),~ H(t):=h(t)-K(t),\\
  w(t,x):=u(t,x+K(t)).
  \end{cases}
\end{equation*}
Then clearly
\begin{equation}\label{eq-c3c}
\lim_{t\to\infty} G(t)=-\infty,\ \    L\leq H(t)\leq 3L,
\end{equation}
and
\begin{equation*}
u_x=w_x, ~u_{xx}=w_{xx},~  u_t=w_t-k(t)w_x.
\end{equation*}
Thus $(w(t,x), G(t),H(t))$ satisfies
\begin{equation*}
  \begin{cases}
    w_t-dw_{xx}-k(t)w_x=f(t,w), &t>0, \  G(t)<x<H(t),\\
   w(t,H(t))=0, &t>0,\\
    H'(t)=h'(t)-K'(t)=-\mu w_x(t,H(t))-k(t),&t>0.\\
  \end{cases}
\end{equation*}

Let $\{s_n\}$ be a positive sequence such that   
\begin{equation}\label{sn}
  \lim_{n\to\infty}s_n=\infty ~ \text{ and }~\lim_{n\to\infty}H(s_n)=\liminf_{t\to\infty}H(t)\in [L, 3L].
\end{equation}
Write $s_n=l_nT+r_n$, where $l_n\in \mathbb{N}$ and $r_n\in[0,T)$. Passing to a subsequence if necessary, we may assume  $r_n\to r^*$ for some $r^*\in[0,T]$.
Define
\begin{equation*}\begin{cases}
  (k_n(t),G_n(t),H_n(t)):=(k(t+s_n-r^*),G(t+s_n-r^*),H(t+s_n-r^*)), \\
  w_n(t,x):=w(t+s_n-r^*,x).
  \end{cases}
\end{equation*}
We have the following result.
\begin{lemma}\label{lem-lim}
For any given $\alpha\in (0,1)$, upon extraction of a subsequence, still denoted by $\{(H_n, w_n)\}$, we have
  \begin{equation*}
   H_n\to I \text{ in } C^{1+\alpha/2}_{loc}(\mathbb{R}) \text{  and } w_n\to W \mbox{ in } {C_{loc}^{(1+\alpha)/2,1+\alpha}(\Sigma)} ~\text{ as } n\to \infty.
  \end{equation*}
 Moreover, $(W(t,x),I(t))$ satisfies
  \begin{equation}\label{eq-V}
    \begin{cases}
      W_t-dW_{xx}-k(t)W_x=f(t,W), ~W\geq 0,  &(t,x)\in\Sigma,\\
      W(t,I(t))=0, &t\in\mathbb{R},\\
      I'(t)=-\mu W_x(t,I(t))-k(t),~ I(t)\geq I(r^*), &t\in\mathbb R,
    \end{cases}
  \end{equation}
   where $\Sigma:=\{(t,x):t\in\mathbb{R}, -\infty<x< I(t)\}$. 
\end{lemma}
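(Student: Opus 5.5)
The plan is the standard compactness argument for free boundary problems. First straighten the free boundary. For each $n$, set $y = x - H_n(t)$ and $v_n(t,y) := w_n(t, y+H_n(t))$. Then $v_n$ is defined on $\{(t,y): t > -s_n + r^*,\ G_n(t)-H_n(t) < y < 0\}$ and satisfies
\[
\partial_t v_n - d\,\partial_{yy} v_n - \big(k_n(t)+H_n'(t)\big)\partial_y v_n = f(t,v_n),\qquad v_n(t,0)=0 .
\]
Here $k_n(t)=k(t+s_n-r^*)=k(t+l_nT+r_n-r^*)$. Since $r_n\to r^*$ and $k$ is uniformly continuous and $T$-periodic, $k_n\to k$ locally uniformly, and in $C^{\alpha/2}_{loc}$ for any smaller Hölder exponent. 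By Lemma \ref{lem-uhbd}, the quantities $v_n$, $H_n'=h'-k$ and the drift are bounded uniformly in $n$. By \eqref{eq-c3c}, $H_n\in[L,3L]$ and $G_n-H_n\to-\infty$ locally uniformly in $t$. So for each compact $[-M,M]\times[-M,0]$, the domain of $v_n$ contains it for large $n$.

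Next, apply the parabolic $L^p$ estimates up to the flat boundary $y=0$ on such sets. This gives uniform $W^{1,2}_p$ bounds, and by Sobolev embedding uniform $C^{(1+\alpha)/2,1+\alpha}$ bounds for $v_n$ on compacts, for any $\alpha\in(0,1)$. In particular $\partial_y v_n(t,0)$ is bounded in $C^{\alpha/2}$ locally. Since $H_n'(t)=-\mu\,\partial_y v_n(t,0)-k_n(t)$, the functions $H_n$ are bounded in $C^{1+\alpha/2}_{loc}(\mathbb R)$. By Arzelà–Ascoli and a diagonal process (slightly decreasing the exponent, which is harmless since $\alpha$ is arbitrary), a subsequence satisfies $H_n\to I$ in $C^{1+\alpha/2}_{loc}$ and $v_n\to V$ in $C^{(1+\alpha)/2,1+\alpha}_{loc}(\mathbb R\times(-\infty,0])$. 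The limit $V$ is a weak, hence by interior Schauder estimates classical, solution of
\[
V_t-dV_{yy}-(k+I')V_y=f(t,V),\qquad V(t,0)=0,\qquad I'=-\mu V_y(t,0)-k .
\]
Transforming back with $W(t,x):=V(t,x-I(t))$ gives the convergence $w_n\to W$ in $C^{(1+\alpha)/2,1+\alpha}_{loc}(\Sigma)$, because $H_n\to I$ in $C^1_{loc}$. It also gives the first three relations in \eqref{eq-V}, and $W\ge0$ since $w_n\ge0$.

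It remains to prove $I(t)\ge I(r^*)$. By construction, $H_n(t)=H(t+s_n-r^*)$, so $I(r^*)=\lim_n H(s_n)=\liminf_{t\to\infty}H(t)$ by \eqref{sn}. For any fixed $t$ we have $t+s_n-r^*\to\infty$. Hence
\[
I(t)=\lim_n H(t+s_n-r^*)\ge\liminf_{\tau\to\infty}H(\tau)=I(r^*).
\]

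The only delicate point is the uniform boundary regularity near the moving boundary, which gives the $C^{1+\alpha/2}$ convergence of $H_n$. The change of variables $y=x-H_n(t)$ handles this, since the drift term $H_n'$ is bounded by Lemma \ref{lem-uhbd}. This is enough for the $L^p$ theory, which needs only bounded measurable lower-order coefficients. The uniform Hölder bound on $\partial_yv_n$ then bootstraps the regularity of $H_n$.
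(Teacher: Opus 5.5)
Your proof is correct and follows the paper's argument: straighten the free boundary, apply uniform parabolic $L^p$ estimates with Sobolev embedding, use the Stefan condition to bound $H_n$ in $C^{1+\alpha/2}$, extract a diagonal subsequence, transform back, and get $I(t)\ge I(r^*)$ from the choice of $s_n$ as a $\liminf$ sequence. The main difference is cosmetic. You straighten by the translation $y=x-H_n(t)$, which keeps the diffusion coefficient equal to $d$ and the drift $k_n+H_n'$ globally bounded, whereas the paper uses the dilation $\zeta=x/H_n(t)$, which is legitimate since $H_n\ge L>0$; also, the nonlinearity in your $v_n$-equation should read $f(t+s_n-r^*,v_n)=f(t+r_n-r^*,v_n)$, which tends to $f(t,\cdot)$ because $r_n\to r^*$.
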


\begin{proof}
Since  $|h'(t)|\leq K_0$ (by Lemma  \ref{lem-uhbd}), we have
  \begin{equation*}
    -K_0-k(t)\leq H'(t)\leq K_0-k(t) ~~\text{ for }t\geq 0.
  \end{equation*}
Define
\begin{equation*}
 \zeta=\frac{x}{H_n(t)}, ~~\tilde{W}_n(t,\zeta)=w_n(t,x).
\end{equation*}
Then $(\tilde{W}_n(t,\zeta),H_n(t))$ solves
\begin{equation}\label{eq-tilV}
  \begin{cases}
   \partial_t\tilde{W}_n-\frac{d}{H_n^2(t)}\partial_{\zeta\zeta}\tilde{W}_n-\frac{k_n(t)+H_n'(t)\zeta}{H_n(t)}\partial_\zeta\tilde{W}_n=f(t+s_n-r^*,\tilde{W}_n),&t>-s_n+r^*,\frac{G_n(t)}{H_n(t)}\leq\zeta\leq 1,\\
   \tilde{W}_n(t,1)=0,~ H_n'(t)=-\frac{\mu}{ H_n(t)}\partial_\zeta\tilde{W}_n(t,1)-k_n(t), &t>-s_n+r^*.
\end{cases}
\end{equation}
By Lemma \ref{lem-uhbd}, we see that $\tilde{W}_n$ is uniformly bounded in $\{(t,\zeta):t\geq -s_n+r^*, \frac{G_n(t)}{ H_n (t)}\leq\zeta\leq 1 \}$. Hence, we can apply the standard  parabolic $L^p$ estimates to \eqref{eq-tilV} to deduce that, for any $\tau>0$, $m>0$, $s\in\mathbb{R}$ and $p>1$, there exists $C_p>0$  independent of $n$ and $s$ such that 
\begin{equation*}
  \|\tilde{W}_n\|_{W_p^{1,2}([s-\tau,s]\times[-m,1])}\leq C_p \text{ for all large }n.
\end{equation*}
  For  $\beta\in (\alpha,1)$ and some fixed large $p$, the  Sobolev  embedding  theorem then yields 
\begin{equation}\label{L^p-est}
  \|\tilde{W}_n\|_{C^{(1+\beta)/2,1+\beta}([s-\tau,s]\times[-L,1])}\leq \tilde{C}_p  \text{ for all large }n,
\end{equation}
where  $\tilde{C}_p$ is a constant independent of $n$ and $s$.
By \eqref{eq-tilV} and \eqref{eq-c3c}, for any $s\in\mathbb{R}$, there exists $C_1>0$  independent of $n$ and $s$ such that 
\begin{equation*}
  \| H_n \|_{C^{1+\beta/2}([s-\tau,s])}\leq C_1.
\end{equation*}
Therefore, we can use a diagonal process to select a subsequence (still denoted by itself for convenience of notation) such that 
\begin{equation*}
  \tilde{W}_n\to \bar{W} \text{ in } C^{(1+\alpha)/2, 1+\alpha}_{loc}(\mathbb R\times (-\infty,1]), ~~ H_n \to I\text{ in } C_{loc}^{1+\alpha/2}(\mathbb R),
\end{equation*}
where $(\bar{W}, I)$ solves the following  problem in the $L^p$ sense and then the classical sense (by standard parabolic regularity):
\begin{equation*}
  \begin{cases}
  \bar{W}_t-\frac{d}{I^2(t)}\bar{W}_{\zeta\zeta}-\frac{k(t)+I'(t)\zeta}{I(t)}\bar{W}_\zeta=f(t,\bar{W}), \  \bar{W}\geq 0,   &t\in\mathbb R,~\zeta\in(-\infty,1],\\
  \bar{W}(t,1)=0,~ I'(t)=-\frac{\mu}{I(t)}\bar{W}_\zeta(t,1)-k(t), &t\in\mathbb R.
   \end{cases}
   \end{equation*}
Set  $W(t,x)=\bar{W}(t,x/I(t))$, then $(W,I)$ solves \eqref{eq-V} and 
\begin{equation*}
  \lim_{n\to\infty}\|w_n-W\|_{C_{loc}^{(1+\alpha)/2,1+\alpha}(\Sigma)}= 0.
\end{equation*}
In view of 
\begin{equation*}
 I(r^*)=\lim_{n\to\infty} H(s_n)=\liminf_{t\to\infty}H(t) \text{ and } I(t)=\lim_{n\to\infty}H(t+s_n-r^*),
\end{equation*}
 we further have  $I(t)\geq I(r^*)$ for $t\in\mathbb R$. The proof is thus complete.
\end{proof}

\subsection{Determine  the limit pair $(W,I)$.}

By \eqref{eq-c3c}, we have
\[L\leq I(t)\leq 3 L\ \ \text{ for }t\in\mathbb R.\]
Moreover, for $t\geq (n_0+n_2)T$ and $x\in[-\underline{h}(t-n_2T),\underline{h}(t-n_2T)]$, it follows from Lemma \ref{lemma-lower} that 
  \begin{align*}
  u(t,x)&\geq \Psi(t,\underline{h}(t-n_2T)-x+x_1(t-n_2T))\\
  &+\Psi(t,\underline{h}(t-n_2T)+x+x_1(t-n_2T))-1-\tilde{q}(t-n_2T).
\end{align*}
For simplicity, we denote
\begin{eqnarray*}
  &\Psi_n^-(t,x):=\Psi(t+r_n-r^*,\underline{h}(t+s_n-r^*-n_2T)-x-K(t+s_n-r^*)+x_1(t+s_n-r^*-n_2T)),\\
  &\Psi_n^+(t,x):=\Psi(t+r_n-r^*,\underline{h}(t+s_n-r^*-n_2T)+x+K(t+s_n-r^*)+x_1(t+s_n-r^*-n_2T)).
\end{eqnarray*}
Then  one has 
 \begin{equation}\label{eq-vn} 
  w_n(t,x)\geq \Psi_n^-(t,x)+\Psi_n^+(t,x)-1-\tilde{q}(t+s_n-r^*-n_2T)
 \end{equation}
 for $t+s_n-r^*\geq (n_0+n_2)T$ and 
 $$x\in[-\underline{h}(t+s_n-r^*-n_2T)-K(t+s_n-r),\underline{h}(t+s_n-r^*-n_2T)-K(t+s_n-r^*)].$$ 
 Since $x_1(t)$ is bounded and 
 \begin{equation*}
  r_n\to r^*  ~\text{ and }  \underline{h}(t+s_n-r^*-n_2T)+K(t+s_n-r^*)\to \infty ~~\text{ as }n\to\infty,
 \end{equation*}
we have
 \begin{equation*}
  \lim_{n\to\infty}\Psi_n^+(t,x)=1 \text{ locally uniformly in } \mathbb{R}\times\mathbb{R}.
 \end{equation*}
Moreover, 
\begin{align*}
  &\underline{h}(t+s_n-r^*-n_2T)-K(t+s_n-r^*)\\
  &\geq \int_{0}^{t+s_n-r^*-n_2T}k(s)ds-\beta-\int_{0}^{t+s_n-r^*}k(s)ds+2L\\
&=-\int_{t+s_n-r^*-n_2T}^{t+s_n-r^*}k(s)ds-\beta+2L\\
&= -n_2T\bar{k} -\beta+2L=:Z_0~ \text{ for }t+s_n-r^*\geq (n_0+n_1)T.
\end{align*}
Since $x_1(t)\to 0$ as $t\to\infty$, letting $n\to\infty$ in \eqref{eq-vn} and using the monotonicity of $\Psi$, we see from  Lemma \ref{lem-lim} that 
\begin{equation}\label{eq-vphi}
  W(t,x)\geq \Psi(t,Z_0-x) \text{ for }(t,x)\in\mathbb{R}\times(-\infty,Z_0].
\end{equation}
Define
\begin{equation*}
  Z^*:=\sup\{Z\in\mathbb{R}:W(t,x)\geq \Psi(t,Z-x) \text{ for }(t,x)\in\mathbb{R}\times(-\infty,Z]\}.
\end{equation*}
Due to  \eqref{eq-vphi}, $W(t,I(t))=0$ and $\Psi(\cdot,x)>0$ for $x>0$, we see that $Z^*\leq I(r^*)$ is well-defined, and 
\begin{equation}\label{eq-vpr}\begin{cases}
  W(t,x)\geq \Psi(t,Z^*-x) \text{ for }(t,x)\in\mathbb{R}\times(-\infty,Z^*],\\
  \min_{t\in\mathbb R}I(t)=I (r^*)\geq  Z^*.
\end{cases}
\end{equation}

\begin{lemma}\label{lemm-gr}
$Z^*=I(r^*)$.
\end{lemma}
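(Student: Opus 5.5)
We argue by contradiction and assume $Z^*<I(r^*)$. The goal is to show that $\Psi(t,Z-x)$ remains below $W$ for some $Z>Z^*$, which contradicts the definition of $Z^*$.

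The first step is to show that $W(t,x)>\Psi(t,Z^*-x)$ strictly for $x<Z^*$. Set $D(t,x):=W(t,x)-\Psi(t,Z^*-x)$ on $\{(t,x): t\in\mathbb R,\ x<Z^*\}$. A direct computation using \eqref{eqn-wave} shows that $\Psi(t,Z^*-x)$ solves the same equation $v_t-dv_{xx}-k(t)v_x=f(t,v)$ as $W$ in \eqref{eq-V}. Hence $D\ge 0$ satisfies a linear equation $D_t-dD_{xx}-k(t)D_x=c(t,x)D$ with $c$ bounded. At $x=Z^*$ we have $D(t,Z^*)=W(t,Z^*)>0$, because $Z^*<I(r^*)\le I(t)$ and $W>0$ in $\Sigma$. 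The strong maximum principle, applied on $(-\infty,t]$ slabs, then gives either $D\equiv 0$ or $D>0$ everywhere. The case $D\equiv 0$ is impossible, since it would force $W(t,Z^*)=0$. So $D>0$.

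The second step is to handle the far field $x\to-\infty$, where the positivity of $D$ degenerates because both $W$ and $\Psi$ tend to $1$. This step is the main obstacle. I would use the same device as in Step a of the uniqueness proof and in Lemma \ref{lemma-upp}, namely the exponentially decaying perturbation built from $\sigma_0$, $\delta$ and $\varrho$. First I need $W\to 1$ as $x\to-\infty$ uniformly in $t$. This follows because $W\ge\Psi(t,Z^*-x)\to 1$, and because $\limsup u\le 1$ by Lemma \ref{lem-uhbd} passes to the limit and gives $W\le 1$. Next fix $\xi_0$ large so that $\Psi(t,\xi)\ge 1-\delta/2$ for $\xi\ge \xi_0-1$. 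For small $\epsilon>0$ set $Z=Z^*+\epsilon$. On the compact part $x\in[Z-\xi_0,Z]$ (with $t$ reduced mod $T$), I claim $W\ge\Psi(t,Z-x)$. Near $x=Z$ this holds because $\Psi(t,0)=0$, $W(t,Z^*)>0$ uniformly, and continuity. Elsewhere on the compact part it holds by the strict positivity of $D$, uniform continuity, and a compactness argument. For this I need $\inf D>0$ over $t\in\mathbb R$ and $x$ in a compact set, which I would obtain from parabolic estimates: a translation-limit argument shows that if the infimum were $0$, a limiting pair would violate the strong maximum principle.

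On the far region $x\le Z-\xi_0$, I would show that $\Psi(t,Z-x)-\epsilon' \varrho(t)\le W$ persists. The idea is to compare $W$ on a time interval starting at a very negative time $s$, where trivially $W\ge\Psi(t,Z-x)-\delta$, using a lower solution $\Psi(t,Z-x)-\varsigma\,e^{-\sigma_0(t-s)}\,(\text{periodic factor})$. Near $u=1$ we have $f_u\approx f_u(\cdot,1)$, whose average is negative, so the perturbation decays. Letting $s\to-\infty$ kills the perturbation entirely, giving $W\ge \Psi(t,Z-x)$ on the whole half-line. The boundary condition at $x=Z-\xi_0$ is supplied by the compact-part estimate.

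In the final step, this yields $W(t,x)\ge\Psi(t,Z^*+\epsilon-x)$ for all $t\in\mathbb R$ and $x\le Z^*+\epsilon$, which contradicts the maximality of $Z^*$. Hence $Z^*=I(r^*)$.

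The delicate point is the uniform positivity of $D$ on compact $x$-sets over all $t\in\mathbb R$, since $W$ is not periodic. I expect to handle it by the translation-compactness argument: take $t_j$ with $D(t_j,x_j)\to 0$, shift time by $t_j$ modulo $T$-multiples, pass to the limit using the uniform estimates of Lemma \ref{lem-lim}, and obtain a limit $\tilde D\ge 0$ vanishing at an interior point. The strong maximum principle then forces $\tilde D\equiv 0$ backward in time, contradicting $\tilde W(t,Z^*)\ge$ a positive lower bound.
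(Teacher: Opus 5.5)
Your proof is correct. Your first step, the uniform positivity of $W(t,x)-\Psi(t,Z^*-x)$ on $\mathbb R\times[y,Z^*]$ via translation--compactness, and the estimate near $x=Z$ are the same as Steps 1, 2 and 4 of the paper. The genuine difference is in the far field $x\to-\infty$, which is the paper's Step 3.

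The paper builds no lower solution there. It solves the auxiliary problems \eqref{eq-wR} on $(Z,Z_1)$ with initial value $1-\epsilon_0$. It lets $Z\to-\infty$ and then shifts by $mT$, $m\to\infty$, to obtain a $T$-periodic solution $\tilde v$ of \eqref{eq-wtild}. It identifies $\tilde v$ with $\Psi(t,Z^*-x+\epsilon)$ by the weighted maximum principle for $U$ in \eqref{eq-U}, which is where $\overline{f_u(\cdot,1)}<0$ enters. Finally it passes to the limit in $W\ge v_\infty(t+mT,\cdot)$.

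You instead compare $W$ on $[s,\infty)\times(-\infty,Z-\xi_0]$ with the explicit lower solution $\Psi(t,Z-x)-p_s(t)$, where $p_s(t)=\varsigma\exp\bigl(\int_s^t(f_u(\tau,1)+\sigma_0/2)\,d\tau\bigr)$ and $\sigma_0=-\frac1T\int_0^Tf_u(\tau,1)\,d\tau$. The boundary datum at $x=Z-\xi_0$ comes from your compact-part estimate, and you then let $s\to-\infty$. In your route $\overline{f_u(\cdot,1)}<0$ enters only through the exponential decay of $p_s$.

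Both routes rest on $W$ being an entire-in-time solution. Yours is shorter and dispenses with the auxiliary problem, the two monotone limits and the uniqueness argument for \eqref{eq-wtild}. The cost is some bookkeeping that the paper's route avoids:
\begin{enumerate}
\item The amplitude $\varsigma$ must be small relative to the neighbourhood of $u=1$ on which $|f_u(t,u)-f_u(t,1)|\le\sigma_0/2$, allowing for the bounded periodic factor in $p_s$.
\item $\xi_0$ must then be chosen with $\Psi(t,\xi)\ge 1-\varsigma$ for $\xi\ge\xi_0-1$, rather than $1-\delta/2$ as you wrote, so that $W(s,\cdot)\ge\Psi(s,Z-\cdot)-\varsigma$ at the initial time.
\item $\epsilon$ is fixed last, through the uniform positivity on $\mathbb R\times[Z^*-\xi_0,Z^*]$.
\item The comparison on the unbounded strip needs the maximum principle for bounded functions.
\end{enumerate}
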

\begin{proof}
We proceed with a contradiction argument; assuming that $Z^*<I(r^*)$, we are going to derive a contradiction  in four steps.

\textbf{ Step 1.} We show that 
\begin{equation}\label{eq-v>p}
  W(t,x)> \Psi(t,Z^*-x) ~~\text{ for } (t,x)\in\mathbb R \times(-\infty,Z^*].
\end{equation}
We already know from \eqref{eq-vpr} that
\begin{equation*}
  W(t,x)\geq \Psi(t,Z^*-x)>0\text{ for }(t,x)\in\mathbb{R}\times(-\infty,Z^*).
\end{equation*}
As $W\geq 0$, the  parabolic strong maximum principle and \eqref{eq-V} then imply
\begin{equation}\label{eq-wpo}
   W(t,x)>0\text{ for }t\in\mathbb{R}, x\in(-\infty,I(t)).
\end{equation}
So it follows from $Z^*<I(r^*)\leq I(t)$ that $W(t,Z^*)>0=\Psi(t,0)$. Hence, we can apply the parabolic strong maximum principle to compare $W(t,x)$ and $\Psi(r,Z^*-x)$ over $\mathbb R\times (-\infty,Z^*)$ to  obtain \eqref{eq-v>p}.

 \textbf{ Step 2.} Given any $y<Z^*$,  we show that
 \begin{equation}\label{eq-chi}
  \varpi  (y):=\inf_{(t,x)\in\mathbb{R}\times[y,Z^*]}[W(t,x)-\Psi(t,Z^*-x)]>0.
 \end{equation}
By Step 1, $\varpi(y)\geq 0$. Suppose by contradiction that \eqref{eq-chi} does not hold. Then in view of \eqref{eq-v>p} there exist   $y_0\in(-\infty,Z^*)$, $|t_n|\to\infty$ and  $x_n\in [y_0,Z^*]$  such that 
\begin{equation*}
  \varpi(y_0)=\lim_{n\to\infty}[W(t_n,x_n)-\Psi(t_n,Z^*-x_n)]=0.
\end{equation*}
Write $t_n=\tilde{l}_nT+\tilde{r}_n$ with $\tilde{l}_n\in\mathbb N$ and $\tilde{r}_n\in[0,T)$. Passing to a subsequence, we may assume that $\tilde{r}_n\to\tilde{r}\in[0,T]$ and $x_n\to x_0^*\in[y_0,Z^*]$. Denote
\begin{equation*}
  (W_n(t,x), I_n(t)):=(W(t+t_n-\tilde{r},x+x_n),I(t+t_n-\tilde{r})-x_n).
\end{equation*} 
 Arguing as in the proof of Lemma \ref{lem-lim}, we can show that, by passing to a subsequence,
 \begin{equation*}
  (W_n(t,x), I_n(t))\to (W^\infty,I^\infty) \text{ in }C_{loc}^{(1+\alpha)/2,1+\alpha}(\tilde{\Sigma})\times C_{loc}^{1+\alpha/2}(\mathbb{R}),
 \end{equation*}
 where $\tilde{\Sigma}:=\{(t,x):t\in\mathbb{R},x<I^\infty(t)\}$ and $(W^\infty,I^\infty)$ solves
\begin{equation}\label{eq-Vinf}\begin{cases}
  W^\infty_t-dW^\infty_{xx}-k(t)W^\infty_x=f(t,W^\infty),~ W^\infty>0, &\text{ for }(t,x)\in\tilde{\Sigma},\\
  W^\infty(t,I^\infty(t))=0 &\text{ for }t\in\mathbb R
\end{cases}
\end{equation}
and 
\begin{equation}\label{eq-Vin}
  W^\infty(\tilde{r},0)=\Psi(\tilde{r}, Z^*-x_0^*).
\end{equation}
Due to $I(t)\geq I(r^*)$ and \eqref{eq-v>p}, we have
\begin{equation*}
  \begin{cases}
    I^\infty(t)+x_0^*\geq I(\tilde{r})\geq I(r^*)>Z^*,\\
     W^\infty(t,x)\geq \Psi(t,Z^*-x_0^*-x)~~ \text{ for }(t,x)\in\mathbb{R} \times (-\infty, Z^*-x_0^*].
  \end{cases}
\end{equation*} 
On the other hand, it is clear that $\Psi(t,Z^*-x_0^*-x)$ satisfies \eqref{eq-Vinf} with $I^\infty(t)$ replaced by $Z^*-x_0^*$. Since $I^\infty(t)>Z^*-x_0^*>0$, we have
\begin{equation*}
  W^\infty(t,Z^*-x_0^*)>0=\Psi(t,0) ~~\text{ for }t\in\mathbb{R}.
\end{equation*} 
Applying the parabolic strong maximum principle to compare $W^\infty(t,x)$ and $\Psi(t,Z^*-x_0^*-x)$ over $(-\infty,\tilde{r}]\times (-\infty,Z^*-x_0^*)$, we obtain
\begin{equation*}
  W^\infty(t,x) >\Psi(t,Z^*-x_0^*-x) \text{ for }t\leq \tilde{r},\ x<Z^*-x_0^*,
\end{equation*}
which contradicts \eqref{eq-Vin}. Therefore, \eqref{eq-chi} holds for all $y< Z^*$.

\textbf{Step 3.} We show that there exists $Z_1<Z^*$ such that 
\begin{center}
  $W(t,x)\geq \Psi(t,Z^*-x+\epsilon)$ in $\mathbb{R}\times(-\infty,Z_1]$ for all small $\epsilon>0$.
\end{center} 

 Let
 \begin{equation*}
  \sigma_0:=-\frac{1}{T}\int_0^Tf_u(s,1)ds.
 \end{equation*}
  By \eqref{smooth}, there exists $\epsilon_0>0$ small enough such that  
\begin{equation}\label{eq-f_u}
  |f_u(t,u)-f_u(t,1)|<\sigma_0 ~~\text{ for } t\in\mathbb R, u\in[1-\epsilon_0,1].
\end{equation} 
Due to $\Psi(t,x) \to\infty$ as $x\to\infty$ uniformly in $t\in\mathbb R$,  we can find $Z_1=Z_1(\epsilon_0)<Z^*$ with $Z^*-Z_1$ large enough such that 
\begin{equation}\label{eq-pl1}
  \Psi(t,Z^*-x)\geq 1-\epsilon_0 ~~\text{ for }t\in\mathbb{R},\ x\leq Z_1.
\end{equation}
By continuity of $\Psi$, there exists  $\epsilon\in(0,\epsilon_0)$ small such that
\begin{equation}\label{eq-pchi}
  \Psi(t,Z^*-Z_1+\epsilon)\leq \Psi(t,Z^*-Z_1)+\varpi(Z_1)~ \text{ for }t\in\mathbb R
\end{equation}
with $\varpi$ given by \eqref{eq-chi}.
Fix  such an $\epsilon$ and some $Z<Z_1$,  then consider  the problem
\begin{equation}\label{eq-wR}
  \begin{cases}
    v_t-dv_{xx}-k(t)v_x=f(t,v), &t>0, x\in(Z,Z_1),\\
    v(t,Z)=1, ~v(t,Z_1)=\Psi(t,Z^*-Z_1+\epsilon), &t>0,\\
    v(0,x)=1-\epsilon_0, &x\in[Z,Z_1].
  \end{cases}
\end{equation}
Noticing that $1$ and $1-\epsilon_0$ are a pair of upper and lower solutions of \eqref{eq-wR}, we can apply the standard upper and lower solution arguments to conclude that \eqref{eq-wR} admits a solution 
$$v_Z\in C_{loc}^{1+\alpha/2,2+\alpha}((0,\infty)\times[Z,Z_1])\cap C_{loc}^{1+\alpha/2,2+\alpha}([0,\infty)\times(Z,Z_1))$$
 satisfying
\begin{equation}\label{eq-1pw}
 1-\epsilon_0<  v_Z(t,x)< 1 ~~\text{ for }t>0,\ x\in(Z,Z_1).
\end{equation}
Moreover, it is easily seen that $v_Z$ increases in $Z$, which implies that the pointwise limit
\begin{equation*}
   v_\infty(t,x):=\lim_{Z\to -\infty}v_Z(t,x)
 \end{equation*}
 is well-defined for $t>0,\ x\in(-\infty,Z_1]$, and  $1-\epsilon_0\leq v_\infty\leq 1$.  By the standard parabolic estimates, this convergence also holds in $C_{loc}^{1,2}([0,\infty)\times(-\infty,Z_1))\cap C_{loc}^{1,2}((0,\infty)\times(-\infty,Z_1])$,  and thus $v_\infty$ solves \eqref{eq-wR} in $[0,\infty) \times(-\infty,Z_1]$  except for the first equation in the second line. Because  $f$  is $T$-periodic in $t$ and $1-\epsilon_0$ is a  lower solution, we see that $v_\infty(t+mT,x)$ is nondecreasing  in $m\in\mathbb R$. Therefore, the function
\begin{equation*}
  \tilde{v}(t,x):=\lim_{m\to\infty}v_\infty(t+mT,x)
\end{equation*}
is well-defined and  $T$-periodic  in $t$.   Similar as before,  we see  that  $\tilde{v}$ solves
\begin{equation}\label{eq-wtild}
  \begin{cases}
    \tilde{v}_t-d\tilde{v}_{xx}-k(t)\tilde{v}_x=f(t,\tilde{v}), ~1-\epsilon_0\leq \tilde{v}\leq 1, &t\in\mathbb R, x\in(-\infty,Z_1),\\
    \tilde{v}(t,Z_1)=\Psi(t,Z^*-Z_1+\epsilon), &t\in\mathbb R,\\
    \tilde{v}(t,x)=\tilde{v}(t+T,x), &t\in\mathbb R, x\in(-\infty,Z_1].
  \end{cases}
\end{equation}
A simple limit argument implies $\tilde{v}(t,-\infty)=1$. 
\iffalse For completeness, we provide a detailed proof. Let $\{x_n\}$  be any sequence such that $x_n\to-\infty$ as $n\to\infty$.  Denote 
\[\tilde{v}_n(t,x):=\tilde{v}(t,x+x_n) \text{ for } t\in\mathbb{R}, x\in(-\infty, Z_1-x_n],\]
then $\tilde{v}_n(t,x)$ solves \eqref{eq-wtild} with $Z_1$ replaced by $Z_1-x_n$. By the standard parabolic estimates, we can find a function $\hat{v}$ such that passing to a subsequence,
\[\tilde{v}_n(t,x)\to\hat{v}(t,x) \text{  as } n\to\infty \text{ locally uniformly in } C^{1, 2}(\mathbb{R}\times\mathbb{R}),\]
where $\hat{v}(t,x)$ satisfies 
\[ \begin{cases}
  \hat{v}_t-d\hat{v}_{xx}-k(t)\hat{v}_x=f(t,\hat{v}), ~1-\epsilon_0\leq \hat{v}\leq 1, &t\in\mathbb R, x\in\mathbb{R},\\
  \hat{v}(t,x)=\hat{v}(t+T,x), &t\in\mathbb R, x\in\mathbb{R}.
\end{cases}
\]
It follows from the usual comparison principle that $\hat{v}(t,x)\geq p(t)$ with $p(t)$ solving
\[p_t=f(t,p),~t>0;~~~p(0)=1-\epsilon_0.\]
Since $p(t+mT)\to 1$ locally uniformly in $t\in\mathbb{R}$ as the integer $m\to\infty$, we have
\[ 1= \lim_{m\to\infty}p(t+mT)\leq \lim_{m\to\infty}\hat{v}(t+mT,x)=\hat{v}(t,x)\leq 1 \text{ locally uniformly in }(t,x)\in\mathbb{R}\times\mathbb R,\]
which implies that 
\[\tilde{v}(t,x+x_n)\to 1 \text{ as }n\to\infty  \text{  in } C^{1, 2}_{loc}(\mathbb{R}\times\mathbb{R}).\]
Since  the sequence $\{x_n\}$ is arbitrary, one obtains that $\tilde{v}(t,-\infty)=1$. 
\fi

Now, we define
$$\Phi(t,x):=\Psi(t,Z^*-x+\epsilon) ~\text{ for }t\in\mathbb R,\ x\leq Z_1.$$
Then $\Phi$ also solves \eqref{eq-wtild} and  $\Phi(t,x)\geq 1-\epsilon_0$ for $x\leq Z_1$ by \eqref{eq-pl1}. Applying the  comparison principle to $\Phi$ and $v_\infty$, we obtain
\begin{equation*}
  \Phi(t+mT,x)\geq  v_\infty(t+mT,x)~~\text{ for } t>-mT,\  x\in(-\infty,Z_1],\  m\in\mathbb N.
\end{equation*}
Passing to the limit  $m\to\infty$, we obtain
\begin{equation}\label{eq-wp1}
  \Phi(t,x)\geq \tilde{v}(t,x)\geq 1-\epsilon_0~~\text{ for }t\in\mathbb{R},\ x\in(-\infty,Z_1].
\end{equation}
We claim that 
\begin{equation*}
  \Phi(t,x)\equiv \tilde{v}(t,x)~~ \text{ for }t\in\mathbb{R},\ x\in(-\infty,Z_1].
\end{equation*}
Define
\begin{equation*}
  U(t,x):=e^{-\sigma_0t-\int_0^tf_u(s,1)ds}[\Phi(t,x)-\tilde{v}(t,x)].
\end{equation*}
A direct calculation shows that 
\begin{equation}\label{eq-U}\begin{cases}
  U_t-dU_{xx}-k(t)U_x+[f_u(t,1)-f_u(t,\theta(t,x))+\sigma_0]U=0,&t\in\mathbb{R},\ x\in(-\infty,Z_1),\\
  U(t,-\infty)=U(t,Z_1)=0,&t\in\mathbb{R},\\
  U(t,x)=U(t+T,x), &t\in\mathbb{R},x\in(-\infty, Z_1],
\end{cases}
\end{equation}
where $\theta(t,x)\in[\tilde{v}(t,x),\Phi(t,x)]\subset[1-\epsilon_0,1]$.
If the claim does not hold, then there exists $(t_0,x_0)\in\mathbb{R}\times(-\infty,Z_1)$ such that 
$$U(t_0,x_0)=\max_{(t,x)\in\mathbb{R}\times(-\infty,Z_1)}U(t,x)>0.$$
By \eqref{eq-f_u} and the first equation of \eqref{eq-U}, we have
\begin{equation*}
  0<\big[U_t-dU_{xx}-k(t)U_x+[f_u(t,1)-f_u(t,\theta(t,x))+\sigma_0]U\big]|_{(t_0,x_0)}=0.
\end{equation*}
This  contradiction shows the claim holds.

From \eqref{eq-v>p},\eqref{eq-chi} and \eqref{eq-pchi}, we see that
\begin{eqnarray*}
  \begin{cases}
     W(t,x)> \Psi(t,Z^*-x)\geq 1-\epsilon_0 &\text{ for } (t,x)\in\mathbb R \times(-\infty,Z_1],\\
  W(t,Z_1)\geq \Psi(t,Z^*-Z_1)+\varpi (Z_1)\geq  \Psi(t,Z^*-Z_1+\epsilon) &\text{ for }t\in\mathbb R. 
  \end{cases}
\end{eqnarray*}
 The usual comparison principle then yields that  
\[ W(t,x)\geq v_\infty(t+mT,x)~\text{ for }t>-mT,\ x\in(-\infty,Z_1],\  m\in\mathbb N.\]
Passing to the limit  $m\to\infty$, we obtain the desired inequality
\begin{equation}\label{eq-vw}
  W(t,x)\geq \tilde{v}(t,x)=\Psi(t,Z^*-x+\epsilon) ~\text{ for }t\in\mathbb R,\ x\in(-\infty,Z_1].
\end{equation}

\textbf{Step 4.} Completion of the proof.

By the equations satisfied by $W$, we can easily deduce from the parabolic $L^p$  estimates and the Sobolev embedding theorem that for some $C>0$,
\[
|W_x(t,x)|\leq C \mbox{ for } t\in\mathbb R,\ Z^*\leq x\leq I(t).
\]
(This can also be obtained by using \eqref{L^p-est}.)
Denote
\begin{equation*}
\varrho :=\varpi(Z_1)>0.
\end{equation*}
By the above bound for $W_x$ and the continuity and periodicity of $\Psi$, we see that for sufficiently small $\epsilon_1\in(0,\varrho ]$,
\begin{eqnarray*}\begin{cases}
 W(t,x)>W(t,Z^*)-\varrho/2 &\text{ for }t\in\mathbb R,\ x\in[Z^*,Z^*+\epsilon_1],\\
 \Psi(t,Z^*-x+\epsilon_1)\leq \Psi(t,Z^*-x)+\varrho/2 &\text{ for }t\in\mathbb R,\ x\in[Z_1, Z^*].
\end{cases}
\end{eqnarray*}
Recalling 
\begin{equation*}
W(t,x)-\Psi(t,Z^*-x)\geq \varpi (Z_1)=\varrho  \text{ for }(t,x)\in\mathbb{R}\times[Z_1,Z^*],
\end{equation*}
we readily see that
\begin{equation*}
  W(t,x)-\Psi(t,Z^*-x+\epsilon_1)\geq \Psi(t,Z^*-x)+\varrho - \Psi(t,Z^*-x)-\varrho/2>0 \text{ for }t\in\mathbb{R},\ x\in[Z_1,Z^*].
\end{equation*}
In addition, by decreasing $\epsilon_1$ if necessary, we also have
\begin{equation*}
  \Psi(t,Z^*-x+\epsilon_1)<\Psi(t,0)+\varrho/2 ~~\text{ for } x\in[Z^*,Z^*+\epsilon_1],\ t\in\mathbb R.
\end{equation*}
It  follows  that 
\begin{equation*}
  W(t,x)-\Psi(t,Z^*-x+\epsilon_1)> W(t,Z^*)-\varrho /2 - \Psi(t,0)-\varrho/2\geq0 \text{ for }  t\in\mathbb{R},\ x\in[Z^*,Z^*+\epsilon_1].
\end{equation*}
Combining these inequalities with \eqref{eq-vw} we obtain, for all small $\epsilon_1\in(0,\epsilon)$,
\begin{equation*}
  W(t,x)-\Psi(t,Z^*-x+\epsilon_1)\geq0 \text{ for } t\in\mathbb{R},\ x\in(-\infty,Z^*+\epsilon_1).
\end{equation*}
This contradicts the definition of $Z^*$. Consequently, $Z^*=I(r^*)$ and the lemma is  proved.
\end{proof}

\begin{proposition}\label{prop-wpsi}
  $W(t,x)\equiv \Psi(t,Z^*-x)$ and $I(t)\equiv Z^*$.
\end{proposition}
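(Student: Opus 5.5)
We already have $W(t,x)\geq \Psi(t,Z^*-x)$ on $\mathbb R\times(-\infty,Z^*]$, $I(t)\geq I(r^*)$ for all $t$, and, by Lemma \ref{lemm-gr}, $Z^*=I(r^*)$. The plan is to compare $W$ with the semi-wave profile $\Psi(t,Z^*-x)$, use that both vanish at the point $(r^*,Z^*)$, and conclude from the strong maximum principle and the Hopf lemma, via the free boundary condition.

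First set $\Phi(t,x):=\Psi(t,Z^*-x)$. It solves the same equation $\Phi_t-d\Phi_{xx}-k(t)\Phi_x=f(t,\Phi)$ on $x<Z^*$, with $\Phi(t,Z^*)=0$. Its free boundary speed is consistent with a fixed boundary: by \eqref{eq-phix1},
\[
-\mu\Phi_x(t,Z^*)-k(t)=\mu\Psi_x(t,0)-k(t)=0 .
\]
Let $D:=W-\Phi\geq 0$ on $\mathbb R\times(-\infty,Z^*]$. Since $I(t)\geq Z^*$ for every $t$, $D$ is defined there, and it satisfies a linear equation $D_t-dD_{xx}-k(t)D_x=c(t,x)D$ with $c$ bounded.

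Next I evaluate at $t=r^*$. There $I(r^*)=Z^*$, so $W(r^*,Z^*)=0=\Phi(r^*,Z^*)$. Moreover $I$ attains its minimum at $r^*$, so $I'(r^*)=0$. The free boundary condition in \eqref{eq-V} then gives $-\mu W_x(r^*,Z^*)=k(r^*)=-\mu\Phi_x(r^*,Z^*)$, that is, $D_x(r^*,Z^*)=0$.

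Now I split into cases. If $D\not\equiv0$ on $(-\infty,r^*]\times(-\infty,Z^*)$, the strong maximum principle, applied on the backward region (unbounded in $x$, but locally this is fine), gives $D>0$ in $(-\infty,r^*]\times(-\infty,Z^*)$. The point $(r^*,Z^*)$ is a boundary minimum with $D=0$. The Hopf lemma, applied in a small parabolic cylinder $(r^*-\eta,r^*]\times(Z^*-\eta,Z^*)$ whose lateral boundary contains $(r^*,Z^*)$, yields $D_x(r^*,Z^*)<0$, contradicting $D_x(r^*,Z^*)=0$. Hence $D\equiv0$ for $t\leq r^*$, i.e.\ $W=\Phi$ there. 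In particular $W(t,Z^*)=0$ for $t\leq r^*$, and since $W>0$ on $x<I(t)$ by \eqref{eq-wpo}, we get $I(t)\leq Z^*$, so $I(t)=Z^*$ for $t\le r^*$.

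To extend to all $t\in\mathbb R$, I would argue by uniqueness forward in time. The pairs $(W,I)$ and $(\Phi,Z^*)$ both solve the same free boundary problem (the Stefan-type condition is $I'=-\mu W_x-k$), and they coincide at $t=r^*$, so uniqueness of the solution of the free boundary problem starting from this datum gives $W\equiv\Phi$ and $I\equiv Z^*$ for $t\geq r^*$. The data are unbounded on the left in $x$, so a comparison principle of the type in Lemma \ref{lem-comp} should be used for uniqueness. A cleaner alternative avoids this: repeat the Hopf argument at every time $t_1$ with $I(t_1)=Z^*$ and combine it with the fact that $\{t: W(t,\cdot)\equiv\Phi(t,\cdot)\}$ is closed and open via this forward uniqueness.

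The main obstacle is the Hopf step near a boundary point, together with justifying the forward-in-time uniqueness on an unbounded domain. Both need $C^{1}$ regularity of $W$ up to $x=I(t)$, which follows from Lemma \ref{lem-lim} (convergence in $C^{(1+\alpha)/2,1+\alpha}$).
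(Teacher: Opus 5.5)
Your argument is correct and is essentially the paper's proof. In both, the minimality of $I$ at $r^*$ gives $I'(r^*)=0$, hence $W_x(r^*,Z^*)=-k(r^*)/\mu=-\Psi_x(r^*,0)$; this rules out the strict Hopf inequality unless $W\equiv\Psi(t,Z^*-x)$, and then \eqref{eq-wpo} forces $I\equiv Z^*$. The only difference is that you correctly note the strong maximum principle gives the identity only for $t\le r^*$, and you add a forward-in-time uniqueness (comparison) step for $t>r^*$, a point the paper's proof passes over without comment.
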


\begin{proof}
In view of Lemma \ref{lemm-gr} and \eqref{eq-vpr}, we see that 
  \begin{equation*}
    \begin{cases}
      Z^*=I(r^*)=\min_{t\in\mathbb R} I(t),\\
      W(t,x)\geq \Psi(t,Z^*-x) \text{ for }t\in\mathbb{R},\ x\leq Z^*.
    \end{cases}
  \end{equation*}
Since
  \begin{equation*}
    W(r^*,I(r^*))=0=\Psi(r^*,0),
  \end{equation*}
 it follows from the parabolic strong maximum principle and the Hopf boundary lemma that 
  \begin{equation*}
    \text{ either } W(t,x)\equiv \Psi(t,Z^*-x) \text{ for }x\leq Z^* \ \text{ or } \ W_x(r^*,I(r^*))<-\Psi_x(r^*,0).
  \end{equation*} 
On the other hand, \eqref{eq-V} and the minimality of $I(r^*)$ imply
  \begin{equation*}
   0= I'(r^*)=-\mu W_x(r^*,I(r^*))-k(r^*),
  \end{equation*}
which gives 
  \begin{equation*}
    W_x(r^*,I(r^*))=-\frac{1}{\mu} k(r^*)=-\Psi_x(r^*,0).
  \end{equation*}
Therefore, we necessarily have 
$$W(t,x)\equiv \Psi(t,Z^*-x) \text{ for }x\leq Z^*.$$ 
Due to $W(t,x)>0$ for $x<I(t)$ by \eqref{eq-wpo}, we further deduce $I(t)\equiv Z^*$.
\end{proof}
\subsection{Completion of the proof of Theorem \ref{thm-exact}}
The arguments below follow those in \cite{DMW25} with minor modifications. For completeness, we provide the details, which are divided into several steps.

\textbf{Step 1.} Let $\{s_n\}$ be the sequence chosen in Lemma \ref{lem-lim} (which is a subsequence of \eqref{sn}), with $ s_n=l_nT+r_n,\ l_n\in\mathbb N, \ r_n\in [0, T),\ r_n\to r^*$. We prove that 
\begin{equation*}
  \begin{cases}
  \lim_{n\to\infty}(h'(t+s_n-r^*)-k(t+s_n-r^*))=0 \text{ for }t\in\mathbb R,\\
  \lim_{n\to\infty}\sup_{x\in[0,h(s_n)]}|u(s_n,x)-\Psi(s_n,h(s_n)-x)|=0.\\
  \end{cases}
\end{equation*}

By Lemmas \ref{lem-lim} and \ref{lemm-gr} and Proposition \ref{prop-wpsi}, we have
\begin{equation*}
\begin{cases}
  H_n(t)=h(t+s_n-r^*)-\int_0^{t+s_n-r^*}k(s)ds+2L\to Z^* \text{ in } C_{loc}^{1+\alpha/2}(\mathbb R) \text{ as }n\to\infty,\\
  \lim_{n\to\infty}u(t+s_n-r^*,x+h(t+s_n-r^*))=\Psi(t,-x) \text{ in } C_{loc}^{(1+\alpha)/2,1+\alpha}(\mathbb{R}\times(-\infty,0]).
\end{cases}
\end{equation*}
It follows that 
\[h'(t+s_n-r^*)-k(t+s_n-r^*)\to 0 \text{ in } C_{loc}^{\alpha/2}(\mathbb R) \text{ as }n\to\infty,\]
and for any $R>0$,
\begin{equation}\label{eq-uphi}\begin{aligned}
  &\lim_{n\to\infty}\|u(s_n,\cdot)-\Psi(s_n,h(s_n)-\cdot)\|_{L^\infty( [h(s_n)-R,h(s_n)])}\\
  &=\lim_{n\to\infty}\|u(s_n,\cdot)-\Psi(r^*,h(s_n)-\cdot)\|_{L^\infty( [h(s_n)-R,h(s_n)])}=0.
\end{aligned}
\end{equation}

 By  Lemmas \ref{lemma-upp} and \ref{lemma-lower},  for any given $\epsilon>0$,  there exists  $N\in\mathbb{N}$ and  $R_1>0$ large enough so that 
\[1-\epsilon\leq  u(s_n,x)\leq 1+\epsilon ~\text{ for }x\in[0,h(s_n)-R_1], n\geq N.\]
Upon increasing $R_1$ if needed, we also  have
\[1-\epsilon\leq \Psi(s_n, h(s_n)-x)\leq 1+\epsilon \  \text{ for } n\in\mathbb N, x\in(-\infty,h(s_n)-R_1].\]
Therefore, 
\[ \|u(s_n,\cdot)- \Psi(s_n, h(s_n)-\cdot)\|_{L^\infty([0,h(s_n)-R_1])}\leq 2\epsilon \ \text{ for } n\geq N.\]
 This and \eqref{eq-uphi} imply
\begin{equation}\label{eq-limup}
  \lim_{n\to\infty}\|u(s_n,\cdot)- \Psi(s_n, h(s_n)-\cdot)\|_{L^\infty([0,h(s_n)])}=0.
\end{equation}

 \textbf{Step 2.} We show that 
\begin{equation}\label{eq-conv}
\lim_{t\to\infty}\left[h(t)-\int_0^tk(s)ds\right]=h^*:=Z^*-2L.
\end{equation}

By Step 1, we have \eqref{eq-limup} and 
\begin{equation}
  \lim_{n\to\infty}[h'(s_n)-k(s_n)]=0.
\end{equation}
  By the choice of $\{s_n\}$ we also have
 \[\lim_{n\to\infty}\left[h(s_n)-\int_0^{s_n}k(s)ds+2L\right]=\liminf_{t\to\infty}\left[h(t)-\int_0^{t}k(s)ds+2L\right]=Z^*.\]
Suppose by contradiction that \eqref{eq-conv} does not hold; then 
there exists a sequence $\{t_n\}$ such that $t_n\to \infty$ as $n\to\infty$ and 
\[\lim_{n\to\infty}\left[h(t_n)-\int_0^{t_n}k(s)ds\right]=\limsup_{t\to\infty}\left[h(t)-\int_0^{t}k(s)ds\right]=\check{h} >h^*.\]
In the following, we are going to derive a contradiction. We first adjust the parameters in the upper solution constructed in \eqref{eq-ups}  by utilizing \eqref{eq-limup}. We now take $M_0=\gamma:=(\check{h}-h^*)/9$, $0<a\leq(\check{h}-h^*)/9$ and $\tau_0:=s_n$ with $n\in\mathbb{N}$ to be determined. Checking the proof of Lemma \ref{lemma-upp},  we can choose  $\delta\in(0,m_a)$ sufficiently small so that the inequalities \eqref{eq-gq1}, \eqref{eq-gq2}, \eqref{eq-gq3} and \eqref{eq-gq4} hold, which lead to 
\begin{eqnarray*}
  \begin{cases}
    \bar{h}'(t)\geq -\mu \bar{u}_x(t,\bar{h}(t)) &\text{ for }t\geq s_n, \\
\bar{u}_t-d\bar{u}_{xx}-f(t,\bar{u})\geq 0 & \text{ for }t\geq s_n,\  x\in[g(t),\bar{h}(t)].
  \end{cases}
 \end{eqnarray*}
%Since $\bar{h}(s_n)-x+x_0(t)\geq M_0>0$ for $x\in[g(s_n),h(s_n)]$, one has
%$$\bar{\Psi} (s_n,\bar{h}(s_n)-x+x_0(t))=\Psi(s_n,\bar{h}(s_n)-x+x_0(t))\geq \Psi(s_n,M_0)>0 \text{ for }x\in[g(s_n),h(s_n)].$$
Moreover, by \eqref{eq-limup}, there exists $n_1\in\mathbb{N}$ large enough so that for all $n\geq n_1$,
\begin{eqnarray*}
  u(s_n,x)- \Psi(s_n, h(s_n)-x)<q_0  \text{ for }x\in[0,h(s_n)].
\end{eqnarray*}
Since  $\bar{h}(s_n)\geq h(s_n)+a$ and $x_1(s_n)\to 0$ as $n\to\infty$, we further deduce from the monotonicity of $\Psi$  that 
\begin{eqnarray}\label{eq-usn}
  u(s_n,x)< \Psi(s_n, \bar{h}(s_n)-x+x_0(s_n))+q_0=\bar{u}(s_n,x) \text{ for }x\in[0,h(s_n)],\ n\geq n_1.
\end{eqnarray}
In addition, it is obvious that 
\begin{equation*}
\Psi(s_n, \bar{h}(s_n)-x+x_0(s_n))+q_0\geq \Psi(s_n, h(s_n))+q_0\geq 1+\frac{q_0}{2}\geq u(s_n,x) \text{ for } x\in[g(s_n),0],\ n\geq n_1\gg 1.
\end{equation*} 
 This and \eqref{eq-usn} imply
\begin{equation*}
\bar{u}(s_n,x)\geq u(s_n,x) \text{ for } x\in[g(s_n),h(s_n)],\ n\geq n_1\gg 1.
\end{equation*} 
Combining these inequalities with the first and second lines of \eqref{eq-sum}, we see that   $(\bar{u},g, \bar{h})$ is an upper solution of \eqref{free-bound} for $t\geq \tau_0:=s_n$ with $n\geq n_1$. It then follows from Lemma \ref{lem-comp} that
\[h(t)\leq \bar{h}(t)=\int_{s_{n}}^tk(s)ds+\gamma(1-e^{-\alpha\sigma_0(t-s_{n})})+h(s_{n})+a+M_0 \text{ for }t\geq s_n. \]
Hence, for all large $k$ satisfying $t_k>s_n$, we have
\[h(t_k)-\int_0^{t_k}k(s)ds\leq h(s_{n})-\int_0^{s_{n}}k(s)ds+\gamma(1-e^{-\alpha\sigma_0(t_k-s_{n})})+a+M_0.\]
Passing to the limit $k\to\infty$ and then $n\to\infty$, we obtain
\[\check{h}-h^*\leq \gamma+a+M_0\leq (\check{h}-h^*)/3,\]
which is a contradiction. Thus \eqref{eq-conv} holds, as desired.

\textbf{Step 3.}  We show that $\lim_{t\to\infty} \sup_{x\in[0,h(t)]}|u(t,x)-\Psi(t,h(t)-x)|=0$.

Let $\{t_n\}$ be any sequence such that  $t_n\to\infty$ as $n\to\infty$. It follows from Step 2 that 
$$\lim_{n\to\infty}H(t_n)=\liminf_{t\to\infty}H(t).$$
Hence, we can apply  Lemma \ref{lem-lim} with $\{t_n\}$ in place of $\{s_n\}$ in \eqref{sn} to conclude that \eqref{eq-limup} holds with $\{s_n\}$ replaced by a subsequence of $\{t_n\}$. The arbitrariness of  $\{t_n\}$ then implies
$$\lim_{t\to\infty} \sup_{x\in[0,h(t)]}|u(t,x)-\Psi(t,h(t)-x)|=0.$$
In view of \eqref{u0(-x)}, 
 the proof of the theorem is now complete.\qed

\section*{Acknowledgments}
This research is supported by The Australian Research Council. %We thank the referees for carefully checking the paper and useful suggestions.

\section*{Data availability statement}
There is no data associated with this research.

\bibliographystyle{plain}

\end{document}